\theoremstyle{plain}
\newtheorem{thm}{Theorem}[section]
\newtheorem{cor}[thm]{Corollary}
\newtheorem{lem}[thm]{Lemma}
\newtheorem{prop}[thm]{Proposition}
\newtheorem{thmABC}{Theorem}
\theoremstyle{definition}
\newtheorem{defn}[thm]{Definition}
\newtheorem{rmk}[thm]{Remark}
\newtheorem{question}[thm]{Question}
\numberwithin{equation}{section}
\newcommand{\norm}[1]{{\lvert #1 \rvert}}
\newcommand{\Sym}{\mathrm{Sym}}
\DeclareMathOperator{\Irr}{Irr}
\newcommand{\Irrabs}{\Irr^{\star}}
\newcommand{\rabs}{r^{\star}}
\newcommand{\FF}{\mathbb{F}}
\newcommand{\ZZ}{\mathbb{Z}}
\newcommand{\NN}{\mathbb{N}}
\newcommand{\llbracket}{[\![}
\newcommand{\rrbracket}{]\!]}
\DeclareMathOperator{\GL}{GL}
\DeclareMathOperator{\Hom}{Hom}
\DeclareMathOperator{\res}{Res}
\DeclareMathOperator{\Out}{Out}
\DeclareMathOperator{\Alt}{Alt}
\newcommand{\cGrAlg}[1]{\widehat{\mathbb{Z}}\llbracket#1\rrbracket}
\title[Positively finitely related groups]{Positively finitely related \\ profinite groups}
\author{Steffen Kionke} \address{Mathematisches Institut der
  Heinrich-Heine-Universit\"at, Universit\"atsstr.\ 1, 40225
  D\"usseldorf, Germany}\email{steffen.kionke@uni-duesseldorf.de}
\author{Matteo Vannacci} \address{Mathematisches Institut der
  Heinrich-Heine-Universit\"at, Universit\"atsstr.\ 1, 40225
  D\"usseldorf, Germany}\email{matteo.vannacci@uni-duesseldorf.de}
\keywords{profinite group, presentations of profinite groups, subgroup growth, representation growth, asymptotic group theory}
\subjclass[2010]{Prim.\ 20E18; Sec.\ 20E07, 20E22, 20F69}
\begin{document}

\begin{abstract}
  We define and study the class of \emph{positively finitely related} (PFR) profinite groups.
  Positive finite relatedness is a probabilistic property of profinite groups which provides a first step to defining
   higher finiteness properties of profinite groups which generalize the positively finitely generated groups introduced by Avinoam Mann.
   We prove many asymptotic characterisations of PFR groups, for instance we show the following: a finitely presented profinite group is PFR if and only if it has at most exponential representation growth, uniformly over finite fields (in other words: the completed group algebra has polynomial maximal ideal growth). From these characterisations we deduce several structural results on PFR profinite groups.
\end{abstract}

\maketitle

\section{Introduction}

A profinite group $G$ equipped with its normalized Haar measure provides a probability space. Asking probabilistic questions in the realm of profinite groups often leads to interesting phenomena. A striking example is the idea, introduced by Avinoam Mann, of \emph{positively finitely generated} (PFG) profinite groups.
Mann defines a profinite group $G$ to be PFG if for some integer $k$ the probability $P(G,k)$ that $k$ random elements generate $G$ topologically is positive (for details we refer to \cite{Mann1996} and Chapter 11 of \cite{SubgroupGrowth}).
A celebrated result of Mann-Shalev \cite{MannShalev1996} states that a profinite group is PFG if and only if it has polynomial maximal subgroup growth.
Ergo, the probabilistic property of positive finite generation is equivalent to an asymptotic property.
In this article we propose to consider positive finite generation to be a robust version of finite generation. 
We set out to define, in a similar probabilistic vein, higher ``positive'' finiteness properties of profinite groups.

 Let $f:H\to G$ be a continuous homomorphism of profinite groups. We say that $f$ is \emph{positively finitely related} (PFR) if $\ker(f)$ is positively finitely normally generated in $H$. As above this means, that for some integer $k$ the probability that $k$ random elements and their $H$-conjugates generate $\ker(f)$.
In this work we initiate a thorough investigation of the following fundamental concept.
\begin{defn}
 A finitely generated profinite group $G$ is said to be \emph{positively finitely related} (PFR) if every continuous epimorphism $f:H \twoheadrightarrow G$ from any finitely generated profinite group $H$ is PFR.
\end{defn}
We will see below that it is sufficient to verify this property for all epimorphisms from finitely generated free profinite groups. From this perspective
it becomes apparent that PFR is actually a condition on the \emph{profinite} presentations and relations of $G$.
For the moment we will just mention that non-abelian free profinite groups are not PFR, whereas finitely presented pro-$p$ groups are PFR.

%
%
%

Our main result is a characterisation of positively finitely related groups in terms of asymptotic properties.
\begin{thmABC}\label{thm:A}
 Let $G$ be a finitely presented profinite group. Then the following are equivalent:
 \begin{enumerate}
  \item\label{thmA:PFR} $G$ is positively finitely related.
  \item\label{thmA:PMEG} $G$ has polynomial minimal extension growth.
  \item\label{thmA:UBERG} $G$ has uniformly bounded exponential representation growth over finite fields.
  \item\label{thmA:groupAlgebra} The completed group algebra $\cGrAlg{G}$ has polynomial maximal left ideal growth.
 \end{enumerate}
\end{thmABC}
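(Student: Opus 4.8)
The plan is to establish the equivalences along the chain \eqref{thmA:PFR}$\Leftrightarrow$\eqref{thmA:PMEG}$\Leftrightarrow$\eqref{thmA:UBERG}$\Leftrightarrow$\eqref{thmA:groupAlgebra}, using \eqref{thmA:PMEG} as the combinatorial bridge between the probabilistic condition \eqref{thmA:PFR} and the representation-theoretic conditions \eqref{thmA:UBERG}, \eqref{thmA:groupAlgebra}.

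For \eqref{thmA:PFR}$\Leftrightarrow$\eqref{thmA:PMEG} I would first reduce, as announced in the introduction, to epimorphisms $\pi_d\colon \wh{F_d}\twoheadrightarrow G$ from free profinite groups of finite rank $d\ge d(G)$; write $R_d=\ker\pi_d$. Positive finite normal generation of $R_d$ in $\wh{F_d}$ should be analysed in the style of Mann's treatment of PFG: a Haar-random $k$-tuple in $R_d$ fails to normally generate $R_d$ precisely when it lies in some maximal open $\wh{F_d}$-invariant subgroup $N\le R_d$, a fixed such $N$ being hit with probability $[R_d:N]^{-k}$. A M\"obius inversion over the lattice of open $\wh{F_d}$-invariant subgroups, together with the elementary fact that a Dirichlet series $\sum_n a_n n^{-s}$ converges for some $s$ iff $(a_n)$ is polynomially bounded, then shows that $R_d$ is positively finitely normally generated iff the number $a_n^{(d)}$ of $\wh{F_d}$-chief factors inside $R_d$ of order $n$ grows polynomially in $n$. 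Each such chief factor $R_d/N$ produces the finite quotient $E=\wh{F_d}/N$ with an epimorphism onto $G$ whose kernel is a minimal normal subgroup of $E$, i.e.\ a minimal extension of $G$; conversely every $d$-generated minimal extension $E$ of $G$ is obtained from $\Theta(n^{d\pm O(1)})$ compatible epimorphisms $\wh{F_d}\twoheadrightarrow E$ by a Gasch\"utz-type count of lifts (using crucially that the kernel is a chief factor). Matching these two counts---and tracking the polynomial-in-$d$ prefactors carefully enough to extract a growth condition intrinsic to $G$---identifies ``$a_n^{(d)}$ polynomial for every $d$'' with polynomial minimal extension growth of $G$.

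For \eqref{thmA:PMEG}$\Leftrightarrow$\eqref{thmA:UBERG} I would split minimal extensions $E$ of $G$ according to whether the kernel $M$ is abelian. When $M$ is abelian it is an irreducible $\FF_p[G]$-module (with $p^m=|M|$) on which $E$ acts through $G$, and the number of relevant extension classes $1\to M\to E\to G\to 1$ lies between $1$ (the split extension always has $M$ as a minimal normal subgroup) and $|H^2(G,M)|$. Finite presentation of $G$ is decisive here: it makes the relation module $R_d^{\mathrm{ab}}$ a finitely generated $\cGrAlg{G}$-module and forces $|H^2(G,M)|\le |M|^{r}$ for a fixed $r$, so the number of abelian-kernel minimal extensions with $|M|=n$ is squeezed between $R_n(G)$ and $n^{r}R_n(G)$, where $R_n(G)$ denotes the number of isomorphism classes of irreducible representations of $G$ of order $n$ over finite fields; hence this contribution is polynomial iff \eqref{thmA:UBERG} holds. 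The non-abelian case $M\cong S^t$ ($S$ simple) must be disposed of unconditionally: $E$ embeds in $\operatorname{Aut}(S)\wr S_t$ with $E/M\cong G$, and invoking the classification of finite simple groups (bounds on $|\Out(S)|$, on the minimal number of generators of $S^t$, and on $|\Hom(G,\cdot)|$) one shows there are only polynomially many such $E$ with $|M|=n$. Together these give \eqref{thmA:PMEG}$\Leftrightarrow$\eqref{thmA:UBERG}.

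Finally, \eqref{thmA:UBERG}$\Leftrightarrow$\eqref{thmA:groupAlgebra} is essentially a dictionary: the finite simple left $\cGrAlg{G}$-modules are exactly the finite simple $\FF_p[G]$-modules over the various primes $p$, that is, the finite irreducible representations of $G$ over finite fields, and a simple module $V$ of order $n$ with endomorphism field $\FF_{q'}$ occurs as $\cGrAlg{G}/L$ for precisely $(n-1)/(q'-1)$ maximal left ideals $L$; so the number of maximal left ideals of index $n$ lies between $R_n(G)$ and $n\,R_n(G)$, and polynomial maximal left ideal growth is equivalent to \eqref{thmA:UBERG}. I expect the main obstacle to be the \eqref{thmA:PFR}$\Leftrightarrow$\eqref{thmA:PMEG} step: converting the ``for some $k$, for all $H$'' probabilistic statement into an intrinsic polynomial-growth condition requires the reduction to free groups, the M\"obius-inversion and Dirichlet-series machinery (with due care for non-open invariant subgroups), and, most delicately, matching the chief-factor counts $a_n^{(d)}$ with minimal extension counts uniformly enough in $d$. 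Secondary pressure points are the use of finite presentation to keep $|H^2(G,M)|$ polynomially bounded in $|M|$ and the CFSG input needed to kill the non-abelian chief factors.
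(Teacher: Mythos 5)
Your overall architecture (using minimal extension growth as the bridge, splitting minimal extensions into abelian and non-abelian kernels, using finite presentation plus the five-term sequence to bound $|H^2(G,V)|\le |V|^r$, and the annihilator dictionary for \eqref{thmA:groupAlgebra}) matches the paper's. But there is one genuine gap: you assert that the minimal \emph{non-abelian} extensions can be ``disposed of unconditionally,'' i.e.\ that there are only polynomially many minimal extensions $1\to S^t\to E\to G\to 1$ of degree $n$ for any finitely generated $G$. This is false. For every open subgroup $H\le_o G$ of index $k$ the split extension $S^{G/H}\rtimes G$ is a minimal non-abelian extension of degree $|S|^k$, and non-conjugate $H$'s give non-isomorphic extensions, so $e^{\mathrm{min,n.a.}}_{|S|^k}(G)\ge a_k(G)/k$; a group with super-exponential subgroup growth (e.g.\ a non-abelian free profinite group) therefore has super-polynomial minimal non-abelian extension growth. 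Your proposed counting via $|\Hom(G,\operatorname{Aut}(S)\wr\operatorname{Sym}(t))|$ also cannot give a polynomial bound: the factor $t!^{\,d}$ with $t\asymp\log n$ is already super-polynomial in $n$. The paper's actual route for \eqref{thmA:UBERG}$\Rightarrow$\eqref{thmA:PMEG} is: UBERG implies at most exponential subgroup growth (Proposition \ref{prop:UBERGimpliesESG}, which rests on the Jaikin-Zapirain--Pyber structure theorem for groups of super-exponential subgroup growth together with Lemma \ref{lem:irreducibleFaithful}), and at most exponential subgroup growth is \emph{equivalent} to polynomial minimal non-abelian extension growth (Theorem \ref{thm:non-abelianExtensionGrowth}, where the coupling is counted by first fixing the conjugacy class of a point stabilizer --- at most $a_k(G)$ choices --- and only then the $\Out(S)^k$-part). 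Without this chain your implication \eqref{thmA:UBERG}$\Rightarrow$\eqref{thmA:PMEG} does not close.

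Two secondary points. First, the uniformity in $d$ that you flag as delicate in step \eqref{thmA:PFR}$\Leftrightarrow$\eqref{thmA:PMEG} is resolved by an observation you do not supply: since finite simple groups are $2$-generated, every minimal extension of a $d(G)$-generated group is $(d(G)+2)$-generated (Lemma \ref{lem:minimalExtensionsBasic}), so $e^{\mathrm{min}}_n(G)=e^{\mathrm{min}}_{n,d(G)+2}(G)$ and the condition ``polynomial for every $d$'' collapses to a single intrinsic condition. Second, M\"obius inversion by itself does not yield the implication ``positive probability $\Rightarrow$ polynomial maximal $F$-stable subgroup growth''; the paper instead observes that the events defined by distinct maximal $F$-stable subgroups $M_i,M_j\le R$ are pairwise independent (because $M_iM_j$ is normal in $F$, hence equals $R$) and applies the second Borel--Cantelli lemma, while the converse direction needs only the union bound. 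These are repairable, but they should be made explicit.
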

For the definition of \emph{polynomial minimal extension growth} we refer to Section \ref{sec:minimalExtensionGrowth1} (Def.~\ref{def:PMEG}). 
Our main tool will be the characterisation in terms of representation growth.
Given a prime number $p$ and a positive integer $k$, we denote by $r_k(G,\FF_p)$ the number of irreducible continuous $k$-dimensional representations
of $G$ over $\FF_p$. The group $G$ is said to have \emph{uniformly bounded exponential representation growth} if there is a positive constant $e > 0$ such that 
$r_k(G,\FF_p) < p^{ek}$ for all primes $p$ and $k\in\NN$. We investigate this property in detail in Sections \ref{sec:UBERG1} and \ref{sec:UBERG2}.
The representation growth of groups over the field of complex numbers was investigated by several authors (see \cite{LubotzkyMartin2004, LarsenLubotzky2008, AKOV2013} and references therein). We think it is quite remarkable that the representation growth over finite fields plays an important role in the present context.

From this characterisation we deduce a few properties of PFR groups.
\begin{thmABC}\label{thm:B} The following properties hold.
\begin{enumerate}
\item\label{point:PFRopen} A profinite group is PFR if and only if some (and then any) open subgroup is PFR.
\item A finitely presented quotient of a PFR group is PFR.
\item Direct products of PFR groups are PFR.
\end{enumerate}
\end{thmABC}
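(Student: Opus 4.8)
I would deduce all three statements from Theorem~\ref{thm:A}, using condition~\ref{thmA:UBERG} (uniformly bounded exponential representation growth over finite fields) as the working characterisation. The first point to record is that \emph{PFR implies finitely presented}: if $\psi\colon F\twoheadrightarrow G$ is an epimorphism from a finitely generated free profinite group with $\ker\psi$ positively finitely normally generated, then a set of tuples of positive Haar measure normally generates $\ker\psi$, so in particular $\ker\psi$ is finitely normally generated and $G$ is finitely presented. Hence every group occurring below that is PFR, whether by hypothesis or in a conclusion, is automatically finitely presented, and Theorem~\ref{thm:A} applies to it. I also use two standard facts about profinite groups: finite presentability is a commensurability invariant (so it passes between a profinite group and its open subgroups), and it is preserved under finite direct products. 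Finally, recall from the introduction that condition~\ref{thmA:UBERG}, and likewise PFR, need only be tested on epimorphisms from finitely generated free profinite groups.

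\textbf{Open subgroups (Part 1).} Let $H\le G$ be open of index $n$. Whichever of $G$, $H$ is PFR is finitely presented, hence so is the other, so by Theorem~\ref{thm:A} it suffices to show that $G$ has~\ref{thmA:UBERG} if and only if $H$ does. Working in finite quotients, if $V$ is a simple $\FF_p G$-module with $\dim_{\FF_p}V=k$, pick a simple submodule $W\subseteq\res_H V$; then $\dim_{\FF_p}W\le k$ and, by Frobenius reciprocity, $V$ is a quotient of the induced module $\FF_p G\otimes_{\FF_p H}W$, of dimension $n\dim_{\FF_p}W\le nk$, which therefore has at most $nk$ isomorphism classes of simple quotients. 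Summing over all simple $\FF_p H$-modules $W$ of dimension at most $k$, and running the symmetric argument in the other direction (any simple quotient $V$ of $\FF_p G\otimes_{\FF_p H}W$ satisfies $W\hookrightarrow\res_H V$ with $\dim_{\FF_p}V\le nk$, and $\res_H V$ has at most $\dim_{\FF_p}V$ distinct composition factors), gives
\begin{equation*}
  r_k(G,\FF_p)\ \le\ nk\sum_{j=1}^{k}r_j(H,\FF_p)
  \qquad\text{and}\qquad
  r_j(H,\FF_p)\ \le\ jn\sum_{i=1}^{jn}r_i(G,\FF_p).
\end{equation*}
As $n$ is fixed, the polynomial-in-$k$ factors may be absorbed into the exponent uniformly in $p$ (since $nk^{2}\le p^{c_n k}$ for all primes $p$ and all $k\ge1$), so~\ref{thmA:UBERG} holds for $G$ exactly when it holds for $H$.

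\textbf{Finitely presented quotients (Part 2).} Let $N\trianglelefteq G$ be closed with $Q:=G/N$ finitely presented, and let $G$ be PFR, hence finitely presented. Inflation along $G\twoheadrightarrow Q$ carries pairwise non-isomorphic irreducible $\FF_p Q$-modules to pairwise non-isomorphic irreducible $\FF_p G$-modules of the same dimension, so $r_k(Q,\FF_p)\le r_k(G,\FF_p)$ for all $p$ and $k$. By Theorem~\ref{thm:A} applied to $G$, property~\ref{thmA:UBERG} holds for $G$, hence for $Q$; and since $Q$ is finitely presented, Theorem~\ref{thm:A} applied to $Q$ then yields that $Q$ is PFR.

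\textbf{Direct products (Part 3), and the main difficulty.} By induction it suffices to treat $G=G_1\times G_2$ with $G_1,G_2$ PFR, hence finitely presented, hence $G$ finitely presented; so by Theorem~\ref{thm:A} we check~\ref{thmA:UBERG}. A simple $\FF_p G$-module $V$ of dimension $k$ factors through a finite product quotient $\bar G_1\times\bar G_2$ (with $\bar G_i$ a finite quotient of $G_i$). Since $\bar G_1$ is normal in and centralised by $\bar G_2$, its inertia group is everything, so Clifford's theorem makes $\res_{\bar G_1}V$ $S$-isotypic for a single simple $\FF_p\bar G_1$-module $S$, with $\dim_{\FF_p}S\le k$. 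Any constituent of $S\otimes_{\FF_p}T$ restricts to $\bar G_1$ as a sum of copies of $S$; filtering the finite-dimensional algebra $\FF_p[\bar G_1\times\bar G_2]=\FF_p\bar G_1\otimes_{\FF_p}\FF_p\bar G_2$ by submodules with factors of the form (simple $\otimes$ simple) then shows that $V$, being a composition factor of the regular module, is a constituent of $S\otimes_{\FF_p}T$ for some simple $\FF_p\bar G_2$-module $T$, which by symmetry has $\dim_{\FF_p}T\le k$. Viewing $S,T$ again as modules for $G_1,G_2$, each pair $(S,T)$ accounts for at most $\dim_{\FF_p}S\cdot\dim_{\FF_p}T\le k^{2}$ such $V$, whence
\begin{equation*}
  r_k(G_1\times G_2,\FF_p)\ \le\ k^{2}\Bigl(\sum_{a\le k}r_a(G_1,\FF_p)\Bigr)\Bigl(\sum_{b\le k}r_b(G_2,\FF_p)\Bigr)\ <\ p^{ek}
\end{equation*}
for a suitable $e$ once $G_1,G_2$ enjoy~\ref{thmA:UBERG}. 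The step I expect to be the main obstacle is none of these reductions, which are short given Theorem~\ref{thm:A}, but rather the representation theory over the non-algebraically-closed fields $\FF_p$, carried out uniformly in $p$: making the Clifford-theoretic comparisons of Parts 1 and 3 fully rigorous (in characteristic $p$ the restriction of a simple module to an open subgroup need not be semisimple, and over $\FF_p$ the constituents of induced and tensor-product modules have to be followed through their non-split endomorphism fields), together with the careful handling of the two finite-presentability facts, commensurability-invariance and closure under finite direct products.
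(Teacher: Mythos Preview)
Your proposal is correct; all three parts go through. Part~2 matches the paper exactly (Corollary~\ref{cor:PRFquotients}). For Parts~1 and~3 you take a different, more direct route than the paper.

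For Part~1, the paper uses the completed group algebra characterisation (Proposition~\ref{prop:UBERGandPMIG}): since $\cGrAlg{G}$ is free of finite rank as a $\cGrAlg{H}$-module, one passes positive finite generation of $\cGrAlg{G}$ back and forth between the two module structures using a transversal (Theorem~\ref{thm:openUBERG}). Your induction/restriction bounds stay entirely within condition~\ref{thmA:UBERG} and are more elementary, at the cost of slightly fiddlier bookkeeping with the polynomial prefactors.

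For Part~3, the paper first reformulates UBERG in terms of \emph{absolutely} irreducible representations over all finite fields (Lemma~\ref{lem:absolutely}, via the Galois-orbit bijection of Lemma~\ref{lem:bijectionGaloisOrbits}), and then invokes the clean fact that every absolutely irreducible representation of $G_1\times G_2$ over $k$ is an outer tensor product $\pi_1\otimes_k\pi_2$. Your composition-series filtration of $\FF_p[\bar G_1]\otimes\FF_p[\bar G_2]$ avoids the passage to absolutely irreducible representations entirely; the cruder bound (at most $k^2$ constituents of each $S\otimes T$) is precisely what lets you work over $\FF_p$ without tracking endomorphism fields. So the ``main obstacle'' you flag---handling non-algebraically-closed $\FF_p$ uniformly in $p$---is what the paper confronts head-on and what your argument sidesteps; your concern is unfounded for your own approach. (Incidentally, the Clifford step in your Part~3 is redundant: the filtration alone already exhibits $V$ as a constituent of some $S\otimes T$, and $\dim S\le\dim V$ then follows because $\res_{\bar G_1}(S\otimes T)\cong S^{\dim T}$.)
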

It is noteworthy that the same properties hold for PFG groups. For positive finite generation the virtual invariance is a deep result of Jaikin-Zapirain and Pyber \cite{JaikinPyber2011}. Whereas in our case \eqref{point:PFRopen} follows relatively easily from Theorem \ref{thm:A}.
Moreover, positive finite \emph{generation} is preserved under extentions. In addition to direct products we can also show that certain semidirect products of PFR groups are PFR. However, this is very technical and it does not add any substantial concept - so this will not be discussed here. We do not know whether being PFR is preserved under extensions in general. 

A finite group is said to be \emph{involved} in a profinite group $G$, if it is a continuous quotient of an open subgroup of $G$.
By a result of Borovik-Pyber-Shalev \cite{BorovikPyberShalev1996} a finitely generated profinite group $G$ is PFG if some finite group is not involved in $G$.
It is known that there are PFG groups which involve every finite group (e.g.~\cite{Bhattacharjee1994}).
In Section \ref{sec:UBERG2} we establish the same criterion for PFR.
\begin{thmABC}\label{thm:C}
Let $G$ be a finitely presented profinite group. If some finite group is not involved in $G$, then $G$ is PFR.
\end{thmABC}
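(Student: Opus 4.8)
The plan is to derive Theorem~\ref{thm:C} from Theorem~\ref{thm:A}: it suffices to prove that a finitely presented profinite group which does not involve some finite group has uniformly bounded exponential representation growth over finite fields. This is the analogue, one finiteness level higher, of the Borovik--Pyber--Shalev theorem for PFG groups, with the subgroup-growth input of that theorem replaced by the representation-growth machinery of Sections~\ref{sec:UBERG1}--\ref{sec:UBERG2}. So fix a finitely presented $G$, say with $d$ generators, that does not involve the finite group $L$; we must produce a constant $e>0$, depending only on $G$, with $r_k(G,\FF_p)<p^{ek}$ for every prime $p$ and every $k\in\NN$. Working over $\ol{\FF_p}$ --- an $\FF_p$-irreducible of dimension $k$ is a sum of Galois-conjugate absolutely irreducibles of dimensions dividing $k$ --- it is enough to bound the number of isomorphism classes of absolutely irreducible $\ol{\FF_p}[[G]]$-modules of dimension $k$ by $p^{O(k)}$.

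First I convert the hypothesis into a statement about composition factors. Since being involved in $G$ is inherited by subgroups and by composition factors, and $L$ embeds into some $\Alt(m)$, the group $\Alt(n)$ is not involved in $G$ for any $n\ge m$. By the classification of finite simple groups, a finite simple group containing no copy of $\Alt(m)$ --- in particular every composition factor of every finite quotient of $G$ --- is alternating of degree $<m$, sporadic, or of Lie type of (untwisted) rank bounded by some $r_0=r_0(m)$: classical groups of large rank involve alternating groups of comparably large degree, and exceptional groups have bounded rank. Hence all composition factors of all finite quotients of $G$ lie in a fixed class $\mathcal S=\mathcal S(m)$ of bounded-rank finite simple groups --- a class which is nonetheless infinite, since it contains $\mathrm{PSL}_2(\FF_q)$ for every prime power $q$.

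The core of the proof is to show that finite presentation, together with this bounded-rank structure of the composition factors, forces UBERG; this is precisely the task Sections~\ref{sec:UBERG1}--\ref{sec:UBERG2} are set up for. The representation-theoretic skeleton is the following. Let $\rho\colon G\to\GL_k(\ol{\FF_p})$ be absolutely irreducible with finite image $H$. A finite $p$-group has only the trivial irreducible $\ol{\FF_p}$-module, so Clifford theory gives $O_p(H)=1$, whence $C_H(F^*(H))=Z(F^*(H))$ and $F^*(H)$ is a central product of quasisimple groups $E_i$ with $E_i/Z(E_i)\in\mathcal S$; restricting $\rho$ to $F^*(H)$ produces a tensor factorisation $k=k_0k_1\cdots k_t$ in which each $E_i$ contributes a faithful projective irreducible of dimension $k_i$, and the Landazuri--Seitz--Zalesskii bounds control the field of definition of a cross-characteristic $E_i$ as well as the number of the relevant irreducible modules and outer automorphisms in every case. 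Finite presentation enters through the finite generation of the relation module of $G$ --- equivalently, through polynomial minimal extension growth, Theorem~\ref{thm:A}\eqref{thmA:PMEG} --- which, together with $d$-generation (bounding the number of epimorphisms onto a fixed finite group), is what keeps the total count down to $p^{O(k)}$.

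The crucial and only genuinely hard step is this last one: finite generation by itself does \emph{not} suffice. For instance $\prod_{b\ge 2}\mathrm{PSL}_2(\FF_{2^b})$ is topologically $2$-generated and involves no $\Alt(6)$, yet each factor $\mathrm{PSL}_2(\FF_{2^b})\cong\mathrm{SL}_2(\FF_{2^b})$ already contributes $b$ distinct $2$-dimensional $\ol{\FF_2}$-modules, so this group has unboundedly many $2$-dimensional representations, fails UBERG, and is not PFR. Thus one must use the finitely-presented hypothesis essentially, to rule out such an unbounded supply of bounded-rank Lie-type quotients over growing fields, and to make this quantitative in the form Theorem~\ref{thm:A} requires. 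The composition-factor reduction above is routine --- it is the finite-linear-groups shadow of the permutation-group argument of Borovik--Pyber--Shalev --- and the representation theory of bounded-rank quasisimple groups is classical; the real work, carried out in Sections~\ref{sec:UBERG1}--\ref{sec:UBERG2}, is the interplay between the finiteness constraint coming from finite presentation and the Clifford-theoretic structure of finite linear groups with restricted composition factors.
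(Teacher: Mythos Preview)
Your proposal contains a genuine error that derails the argument. The example you give to show that finite generation alone does not suffice is wrong: $\prod_{b\ge 2}\mathrm{PSL}_2(\FF_{2^b})$ \emph{does} have UBERG. You count the $b$ Frobenius twists of the natural module of $\mathrm{SL}_2(\FF_{2^b})$ as distinct $2$-dimensional $\ol{\FF_2}$-modules, which is true, but UBERG is defined via $r_k(G,\FF_p)$, not $\rabs_k(G,\ol{\FF_p})$. Those $b$ twists form a single Galois orbit and, by Lemma~\ref{lem:bijectionGaloisOrbits}, correspond to a \emph{single} irreducible $\FF_2$-representation of dimension $2b$; there is no accumulation in any fixed dimension. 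In fact, this group satisfies the hypothesis of the theorem (it is finitely generated and does not involve $\Alt(6)$), so once the theorem is proved it certifies UBERG for this very example.

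Consequently your structural thesis --- that finite presentation must be used ``essentially'' inside the UBERG argument, via the relation module --- is mistaken. The paper proves the stronger statement that \emph{any finitely generated} profinite group not involving some finite group already has UBERG; finite presentation enters only at the very end, to pass from UBERG to PFR via Theorem~\ref{thm:PFRisUBERG}. The paper's argument is also much shorter than the Clifford-theoretic programme you outline: one observes that the finite quotients of $G$ lie in a Babai--Cameron--P\'alfy class $\mathcal{G}(c_0)$, so the image of any irreducible $\rho\colon G\to\GL_n(\FF_p)$ has order at most $p^{cn}$; then one invokes \cite[Prop.~6.1]{JaikinPyber2011} (or alternatively \cite[Thm.~1.3]{BorovikPyberShalev1996}) to bound the number of conjugacy classes of $d$-generated irreducible subgroups of $\GL_n(\FF_p)$ by $p^{c'n}$; multiplying gives $r_n(G,\FF_p)\le p^{(c'+cd)n}$. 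Your composition-factor reduction is the right first move, and the Clifford/Landazuri--Seitz route could in principle be pushed through (this is roughly what underlies the cited black boxes), but as written your proposal never completes it, and the part you flag as the ``real work'' rests on a false premise.
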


All these results suggest that positive finite relatedness and positive finite generation are in some sense similar properties. However, we do not understand the exact relation between the two. Note that PFG does not imply PFR. Indeed, a PFR group is finitely presented, however, there are PFG groups which are not finitely presented (the groups considered in \cite{Vannacci} are PFG by \cite{Quick}). We consider the following question to be of importance in future investigations of this topic.
\begin{question}
Is every PFR group also PFG? Is every finitely presented PFG group also PFR?
\end{question}

\subsection*{Organization of the article}
For the convenience of the reader we gathered some preliminary results in Section \ref{sec:preliminaries}. We also collect some notation.

In Section \ref{sec:MinimalExtensionGrowth} we introduce profinite presentations and give the main definitions. In particular, we introduce the concept of extension growth. We prove the equivalence of
\eqref{thmA:PFR} and \eqref{thmA:PMEG} in Theorem \ref{thm:A}.

The minimal non-abelian extensions are considered in Section \ref{sec:MinimalNon-abelianExt}. The main result of this section is that minimal non-abelian extension growth can be characterised in terms of subgroup growth.

The minimal abelian extensions are closely related to the modular representation theory of profinite groups. We discuss this in
Section \ref{sec:UBERG1}. In this section we prove the equivalence of \eqref{thmA:PFR} and \eqref{thmA:UBERG} in Theorem \ref{thm:A}.

In Section \ref{sec:UBERG2} we use the characterisation via representation growth to complete the proof of Theorem \ref{thm:A} and to prove Theorems \ref{thm:B} and \ref{thm:C}. We also discuss the example of the free profinite groups in detail.

In the last section we briefly introduce and discuss a weaker version of the PFR property. We explain that it is a rather fragile and mysterious property.

\section{Preliminaries}\label{sec:preliminaries}

\subsection{Notation and conventions}
The main focus of this work are profinite groups,
therefore all subgroups of profinite groups will be intended
to be closed and all homomorphisms between profinite groups to be continuous unless explicitly stated.
In particular, a subset $X$ of a profinite group $G$ is said to generate $G$, if no proper closed subgroup of $G$ contains $X$.
Throughout $d(G)$ denotes the minimal cardinality of a generating set of the profinite group $G$.

The symbols $\NN$ and $\ZZ$ denote the set of natural numbers and the ring of integers respectively.
For a prime power $q$, the finite field with $q$ elements will be denoted by $\FF_q$. The algebraic closure of a field $k$ will be denoted by $\bar{k}$.

\subsection{Probability theory}
The following result is a standard tool in probability theory and its easy proof can be found in \cite[pp.\ 389-392]{Renyi1970}. 

\begin{lem}(Borel-Cantelli)\label{lem:borelcantelli}
Let $X_i$ be a sequence of events in a probability space $X$ with probabilities $p_i$.
\begin{enumerate}
 \item If $\sum p_i$ converges, then the probability that infinitely many of the $X_i$ happen is $0$.
 \item\label{it:BorelCantelli2} If the $X_i$ are pairwise independent and $\sum p_i$ diverges, then the probability that infinitely many of the $X_i$ happen is $1$.
\end{enumerate}
\end{lem}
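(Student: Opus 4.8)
The plan is to prove the two parts separately: part (1) is a routine union bound, while part (2) rests on a second moment estimate, and this is the one place where the independence hypothesis is used.

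For part (1), write $A = \bigcap_{n\geq 1}\bigcup_{i\geq n}X_i$ for the event that infinitely many of the $X_i$ occur. Since $A\subseteq\bigcup_{i\geq n}X_i$ for every $n$, countable subadditivity gives $P(A)\leq\sum_{i\geq n}p_i$. As $\sum_i p_i$ converges, the tail $\sum_{i\geq n}p_i$ tends to $0$ as $n\to\infty$, so $P(A)=0$.

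For part (2), I would work with truncated counting functions. Fix $n$, and for $N>n$ put $T = T_{n,N} = \sum_{i=n}^{N}\mathbf{1}_{X_i}$, so that $\{T\geq 1\}=\bigcup_{i=n}^{N}X_i$, and set $\mu = \mu_{n,N} = \mathbb{E}[T] = \sum_{i=n}^{N}p_i$. Because the $X_i$ are pairwise independent, the covariances of the indicators vanish, hence $\mathrm{Var}(T)=\sum_{i=n}^{N}p_i(1-p_i)\leq\mu$ and therefore $\mathbb{E}[T^2]=\mathrm{Var}(T)+\mu^2\leq\mu+\mu^2$. Applying the Cauchy--Schwarz inequality to $T = T\cdot\mathbf{1}_{\{T\geq 1\}}$ yields $\mu^2\leq\mathbb{E}[T^2]\,P(T\geq 1)$, so that
\[
P\Bigl(\bigcup_{i=n}^{N}X_i\Bigr) = P(T\geq 1) \geq \frac{\mu^2}{\mu+\mu^2} = \frac{\mu}{1+\mu}.
\]
Since $\sum_i p_i$ diverges and only the finitely many terms $p_1,\dots,p_{n-1}$ have been dropped, $\mu_{n,N}\to\infty$ as $N\to\infty$ for each fixed $n$; letting $N\to\infty$ gives $P(\bigcup_{i\geq n}X_i)=1$ for every $n$. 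Finally, the sets $\bigcup_{i\geq n}X_i$ decrease to $A$, so by continuity from above $P(A)=\lim_{n\to\infty}P(\bigcup_{i\geq n}X_i)=1$.

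The main obstacle is the lower bound in part (2): the naive first moment bound $P(\bigcup_{i=n}^N X_i)\leq\sum p_i$ points the wrong way, so one genuinely needs the second moment method (equivalently a Paley--Zygmund type inequality), and it is precisely here that pairwise independence enters, through the vanishing of the covariances in the variance computation. Everything else is elementary measure theory. As the authors remark, a complete account is given in \cite[pp.\ 389--392]{Renyi1970}.
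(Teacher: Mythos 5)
Your proof is correct and is essentially the standard argument that the paper defers to (the paper gives no proof of its own, citing R\'enyi, whose treatment of the pairwise-independent case is exactly this second-moment/Chebyshev-type estimate). Both the union-bound for part (1) and the variance computation plus Cauchy--Schwarz for part (2) are carried out correctly, and you rightly identify pairwise independence as entering only through the vanishing covariances.
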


\subsection{Automorphisms of direct powers of a simple group}
Let $G$ be a finite group. We denote by $\mathrm{Aut}(G)$ the automorphism group of $G$ and by $\mathrm{Out}(G)$ the group of outer automorphisms of $G$. 
\begin{lem}\label{lem:automorphismsSk}
  Let $S$ be a finite non-abelian simple group and fix an integer $k$. Then $\mathrm{Aut}(S^k)$ is isomorphic to $\mathrm{Aut}(S)^k\rtimes \mathrm{Sym}(k)$ with $\mathrm{Sym}(k)$ naturally permuting the $k$ copies of $\mathrm{Aut}(S)$. 
  \begin{proof}
  	The $k$ direct factors of $S^k$ are the only minimal normal subgroups of $S^k$. Any automorphism $\varphi$ of $S^k$ will then permute these minimal normal subgroups. This gives a surjective homomorphism $\psi$ from $\mathrm{Aut}(S^k)$ onto the symmetric group $\mathrm{Sym}(k)$. Identify $\mathrm{Sym}(k)$ with the subgroup of $\mathrm{Aut}(S^k)$ which only permutes the factors, then $\varphi \circ \psi(\varphi)^{-1}$ fixes each minimal normal subgroup and hence it induces automorphisms of these direct factors. Therefore $\mathrm{Aut}(S^k)$ is isomorphic to $\mathrm{Aut}(S)^k\rtimes \mathrm{Sym}(k)$ with $\mathrm{Sym}(k)$ naturally permuting the $k$ copies of $\mathrm{Aut}(S)$. Moreover, we have that $\mathrm{Out}(S^k)$ is isomorphic to $\mathrm{Out}(S)^k \rtimes \mathrm{Sym}(k)$ with the action of the symmetric group as above.
  \end{proof}
\end{lem}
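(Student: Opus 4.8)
The plan is to pin down the minimal normal subgroups of $S^k$ and then exploit that every automorphism permutes them. First I would show that, since $S$ is non-abelian simple, the only minimal normal subgroups of $S^k$ are the $k$ coordinate factors $S_i := 1\times\cdots\times S\times\cdots\times 1$. Indeed, if $N$ is a minimal normal subgroup then $N\neq 1$, so some projection $\pi_j(N)$ is a nontrivial normal subgroup of $S$, hence equals $S$. Now $N\cap S_j$ is normal in $S^k$, so by minimality it is either $N$ — in which case $N\subseteq S_j$ and $\pi_j(N)=S$ force $N=S_j$ — or trivial. In the latter case $[N,S_j]\subseteq N\cap S_j=1$ (using that $N$ and $S_j$ are both normal), so $N$ centralizes $S_j$; combined with $\pi_j(N)=S$ this gives $S=Z(S)$, a contradiction.

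Next I would record the permutation action: an automorphism $\varphi$ sends $\{S_1,\dots,S_k\}$ to itself, yielding a homomorphism $\psi\colon\mathrm{Aut}(S^k)\to\mathrm{Sym}(k)$. Identifying $\mathrm{Sym}(k)$ with the subgroup of $\mathrm{Aut}(S^k)$ permuting coordinates gives a section of $\psi$, so $\psi$ is surjective and $\mathrm{Aut}(S^k)=\ker(\psi)\rtimes\mathrm{Sym}(k)$, with $\mathrm{Sym}(k)$ acting by permuting the factors. It remains to identify $\ker(\psi)$, the automorphisms fixing each $S_i$ setwise. Restriction defines a homomorphism $\rho\colon\ker(\psi)\to\prod_i\mathrm{Aut}(S_i)\cong\mathrm{Aut}(S)^k$, which is injective because $S^k=\langle S_1,\dots,S_k\rangle$ and surjective because any tuple $(\alpha_1,\dots,\alpha_k)$ of automorphisms of $S$ assembles to the product automorphism $\alpha_1\times\cdots\times\alpha_k$ of $S^k$, which lies in $\ker(\psi)$. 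Combining the two displays yields $\mathrm{Aut}(S^k)\cong\mathrm{Aut}(S)^k\rtimes\mathrm{Sym}(k)$.

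For the statement about outer automorphisms, I would note that $Z(S^k)=1$, so $\mathrm{Inn}(S^k)\cong S^k$ sits inside $\mathrm{Aut}(S)^k$ as $\mathrm{Inn}(S)^k$, and this subgroup is normalized by the coordinate-permutation copy of $\mathrm{Sym}(k)$; passing to the quotient gives $\mathrm{Out}(S^k)\cong(\mathrm{Aut}(S)/\mathrm{Inn}(S))^k\rtimes\mathrm{Sym}(k)=\mathrm{Out}(S)^k\rtimes\mathrm{Sym}(k)$ with the same action of the symmetric group.

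The only genuinely substantive step is the identification of the minimal normal subgroups; everything afterwards is formal bookkeeping with semidirect products. Within that step, the point requiring a little care is the centralizer argument $[N,S_j]\subseteq N\cap S_j$, which relies on both $N$ and $S_j$ being normal, together with the fact that a non-abelian simple group has trivial center.
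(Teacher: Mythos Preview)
Your proof is correct and follows exactly the same route as the paper's: identify the minimal normal subgroups of $S^k$ as the coordinate factors, obtain the permutation map $\psi$ to $\Sym(k)$ with its obvious section, and recognise $\ker\psi$ as $\mathrm{Aut}(S)^k$. The only difference is that the paper simply asserts that the $k$ direct factors are the only minimal normal subgroups, whereas you supply the standard argument for this fact; your additional detail on the $\Out$-statement is likewise just an expansion of what the paper states without proof.
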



\subsection{Extensions of profinite groups and cohomology}
We briefly discuss some results on extensions of profinite groups that will be of use in later sections.
For the basic terminology of group extension we refer to Section 11.1 of \cite{RobinsonBook}.

Let $G$ be a profinite group and $K$ a finite group. An \emph{extension} of $G$ by $K$ is a short exact sequence $1 \to K \xrightarrow{i} E \xrightarrow{p} G \to 1$. Two extensions $1 \to K \xrightarrow{i_i} E_i \xrightarrow{p_i} G \to 1$, $i=1,2$, of $G$ by $K$ are \emph{isomorphic} if there exist isomorphisms $\alpha: K \to K$ and $\theta : E_1\to E_2$ that make the diagram
\[
  \xymatrix{ 1 \ar[r] & K \ar[r]^{i_1} \ar[d]_{\alpha} & E_1 \ar[r]^{p_1} \ar[d]_{\theta} & G \ar[r] \ar@{=}[d] & 1 \\
             1 \ar[r] & K \ar[r]^{i_2} & E_2 \ar[r]^{p_2} & G \ar[r] & 1 } 
\]
commutative. Two extensions are \emph{equivalent} if they are isomorphic with $\alpha = \mathrm{id}_K$. The following lemma is a generalization of \cite[11.4.21]{RobinsonBook} to profinite groups. This is of some interest, but we do not know of an explicit reference.

\begin{lem}\label{lem:coupling}
	Let $K$ be a finite group with trivial centre and let $G$ be a finitely generated profinite group.
	The equivalence classes of extensions of $G$ by $K$ are in bijection with the elements of $\Hom(G, \Out(K))$ (called \emph{couplings}).
	The isomorphism classes of extensions of $G$ by $K$ are in bijection with the conjugacy classes of couplings.
\end{lem}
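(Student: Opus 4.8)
The plan is to imitate the proof of the classical result \cite[11.4.21]{RobinsonBook} for abstract groups, being careful that all the maps involved are continuous. First I would fix a set-theoretic continuous section $s\colon G\to E$ of $p$ with $s(1)=1$; such a section exists because $G$ is finitely generated (hence, more to the point, because $K$ is finite the extension is really a finite-level phenomenon: $E$ is a profinite group with $K$ open, so $p$ is an open map and admits a continuous section). Conjugation by $s(g)$ induces an automorphism of $i(K)\cong K$, and since $\Inn(K)$ is encoded by the choice of section (which is only determined up to $K$), the composite $G\to\Aut(K)\to\Out(K)$ is a well-defined map $\chi_E\colon G\to\Out(K)$ independent of $s$; I would check it is a continuous homomorphism (continuity follows because $\chi_E$ factors through the finite quotient $G/p(C_E(i(K)))$, or simply because $\Out(K)$ is finite and the preimage of a point is open). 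Equivalent extensions visibly give the same $\chi_E$, so we get a well-defined map from equivalence classes to $\Hom(G,\Out(K))$.

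Next I would prove surjectivity and injectivity. For surjectivity, given $\omega\in\Hom(G,\Out(K))$, I would construct an extension realizing it. Pull back along $\omega$ the canonical extension $1\to K\to \Aut(K)\to\Out(K)\to 1$: that is, set $E:=\{(g,a)\in G\times\Aut(K) : \omega(g)=\bar a\}$, which is a closed subgroup of the profinite group $G\times\Aut(K)$, hence profinite, and the obvious sequence $1\to K\to E\to G\to 1$ is an extension with coupling $\omega$. Here the hypothesis $Z(K)=1$ is what makes $K\cong\Inn(K)=\ker(\Aut(K)\to\Out(K))$, so the kernel is genuinely $K$ and not merely a central extension of it; this is the exact point where centerlessness is used, and it is the analogue of the classical obstruction in $H^3$ vanishing for centerless $K$. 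For injectivity, suppose two extensions $E_1,E_2$ have the same coupling $\omega$. Choosing compatible continuous sections $s_i$ so that conjugation by $s_i(g)$ induces the \emph{same} automorphism $\tilde\omega(g)\in\Aut(K)$ lifting $\omega(g)$ (possible after adjusting each section by a continuous map $G\to K$, using again $Z(K)=1$ to make the adjustment unique), the two factor sets $f_i\colon G\times G\to K$ defined by $s_i(g)s_i(h)=f_i(g,h)\,s_i(gh)$ both take values in $K$ and satisfy the same twisted cocycle identity with the common action $\tilde\omega$; centerlessness forces $f_1=f_2$, and then $(g,x)\mapsto$ (the corresponding element) gives a continuous isomorphism $E_1\to E_2$ over $G$ which is the identity on $K$. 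This establishes the first bijection.

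For the second statement, I would compare isomorphism and equivalence. Two extensions are isomorphic iff there is $\alpha\in\Aut(K)$ and a continuous $\theta$ making the ladder commute; composing the second row with conjugation transports the coupling $\omega_2$ to $\bar\alpha\,\omega_2\,\bar\alpha^{-1}$, i.e. to the $\Inn(\Out(K))$-conjugate — more precisely, to the $\Aut(K)$-action on $\Out(K)$ by conjugation, which is exactly conjugation by the image $\bar\alpha\in\Out(K)$ since inner automorphisms act trivially on $\Out(K)$. Hence isomorphic extensions have couplings in the same $\Out(K)$-conjugacy class, and conversely, given $\omega_1=\bar\alpha\,\omega_2\,\bar\alpha^{-1}$ one builds an isomorphism by combining an equivalence (first part) with the twist by $\alpha$. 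So isomorphism classes correspond to conjugacy classes of couplings.

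The main obstacle is not conceptual but topological bookkeeping: I must ensure at every stage that sections, factor sets, and the isomorphisms between extensions can be chosen \emph{continuously}, not merely as abstract maps. The reason this goes through cleanly is that $K$ is \emph{finite}: any extension of a profinite group by a finite group is itself profinite with $K$ open, $p$ is then an open map and splits continuously as a map of spaces, and all the "corrections by a map $G\to K$" live in the finite group $K$ so continuity is automatic from locally constant behaviour. One should also note the finite-generation hypothesis on $G$ is essentially harmless here — it guarantees there are only finitely many open subgroups of each index, but the bijection itself only needs continuity of the section — so I would either invoke it where convenient or remark that it is not strictly necessary for the stated bijection.
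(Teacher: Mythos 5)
Your argument is essentially correct, but it takes a genuinely different route from the paper. You adapt the classical proof of \cite[11.4.21]{RobinsonBook} directly: choose a continuous section of $p\colon E\to G$, extract the coupling, prove injectivity by normalising sections and comparing factor sets (with $Z(K)=1$ forcing the factor set to be determined by the chosen lift $\tilde\omega\colon G\to\Aut(K)$, since $k\mapsto c_k$ is injective), and prove surjectivity by pulling back $1\to\Inn(K)\to\Aut(K)\to\Out(K)\to 1$ along $\omega$. The paper instead fixes a free presentation $1\to R\to F\xrightarrow{\pi} G\to 1$ and never takes a section of $p$ at all: it lifts $\chi\circ\pi$ to $\eta\colon F\to\Aut(K)$ using the freeness of $F$, forms the covering group $B=K\rtimes_\eta F$, and shows that every extension with coupling $\chi$ is the quotient $B/C_{q^{-1}(R)}(K)$ (this is where $Z(K)=1$ enters there). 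That intrinsic description delivers uniqueness and existence simultaneously and avoids factor sets and continuous sections of $p$ altogether; your approach is closer to the textbook argument and makes the role of $Z(K)=1$ (injectivity of $K\to\Inn(K)$, rigidity of lifts and of factor sets) more transparent, at the cost of more topological bookkeeping.

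One justification in your write-up is wrong, although the fact it is meant to support is true. You assert twice that $K$ is open in $E$; it is not, unless $G$ is finite, since $K$ open would force $E/K\cong G$ to be discrete. The existence of a continuous (set-theoretic) section of a continuous epimorphism of profinite groups is a standard fact that holds with no openness hypothesis (see e.g.\ \cite[Prop.~2.2.2]{RibesZalesskii}), so your construction survives, but you should cite that instead. The rest of the topological bookkeeping is sound: $C_E(K)$ \emph{is} open (each $C_E(k)$ is the preimage of a point under the continuous map $e\mapsto eke^{-1}$ into the finite discrete set $K$), so the coupling is continuous, and all correction maps land in the finite discrete group $K$, so their continuity amounts to being locally constant.
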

 For completeness we will sketch a proof of the lemma in 5 steps.
\begin{proof}
We fix a presentation $1\to R \to F \xrightarrow{\pi} G \to 1$ of $G$, where $F$ is a finitely generated free profinite group.

\noindent \textbf{Step 1.}
Let $1 \to K \xrightarrow{i_i} E_i \xrightarrow{p_i} G \to 1$, $i=1,2$, be two extensions of $G$ by $K$ with coupling $\chi_1$ and $\chi_2$ respectively. If the two extensions are isomorphic check that $\chi_2$ 
is conjugated to $\chi_1$ in $\mathrm{Out}(K)$. 

\noindent \textbf{Step 2.} Consider an extension $1 \to K \to E \xrightarrow{p} G \to 1$ of $G$ by $K$ with associated coupling $\chi: G \to \mathrm{Out(K)}$. Since $F$ is free, there exists a continuous map $\eta: F \to \mathrm{Aut}(K)$ that makes
\[
\xymatrix{ \ & F \ar[d]^{\chi\circ \pi} \ar@{-->}[ld]_\eta \\ \mathrm{Aut}(K) \ar[r] & \mathrm{Out}(K)}
\]
commute. Form the semidirect product $B= K \rtimes_\eta F$, this is sometimes called a \emph{covering group} of the extension. Clearly,
$1 \to K \to B \xrightarrow{q} F \to 1$ is an extension.
It is straightforward to verify that the equivalence class of this extension is uniquely determined by $\chi$ and its isomorphism class is uniquely determined by the conjugacy class of $\chi$. Deduce that the lemma holds if $G$ is a free group.

\noindent \textbf{Step 3.} By Step 2, for every extension $1 \to K \to E \xrightarrow{p} G \to 1$ of $G$ by $K$ with coupling $\chi$ there exists a surjective homomorphism $f: B \to E$ such that $p\circ f = \pi \circ q$ and $f$ is the identity on $K$.

\noindent \textbf{Step 4.} Check that the kernel of $f$ is precisely $C_{q^{-1}(R)}(K)$ (here we use $Z(K) = 1$).
Use this intrinsic description and Step $2$ to show that the (conjugacy class of) the coupling determines the equivalence (resp. isomorphism) class of the extension uniquely.

\noindent \textbf{Step 5.} Use the description $E = B/C_{q^{-1}(R)}(K)$ obtained in Step 4 to show that an extension with a given coupling always exists. 
\end{proof}

In Section~\ref{sec:UBERG1} we will apply the ``five term exact sequence''.
\begin{thm}\textnormal{\cite[Corollary~7.2.5]{RibesZalesskii}}\label{thm:5termsequence}
   Let $G$ be a profinite group, $K$ a closed normal subgroup of $G$ and let $A$ be a discrete $G$-module. Then there exists a five term exact sequence
   \begin{multline*}
    0 \to  H^1(G/K, A^K) \to H^1(G,A) \to H^1(K,A)^{G/K}  \to \\ H^2(G/K,A^K) \to H^2(G,A).
   \end{multline*}
\end{thm}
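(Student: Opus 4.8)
The plan is to obtain this as the exact sequence of low-degree terms of the Lyndon--Hochschild--Serre spectral sequence for profinite cohomology. First I would record (or quote from \cite[Ch.~7]{RibesZalesskii}) the existence, for a profinite group $G$, a closed normal subgroup $K$ and a discrete $G$-module $A$, of a first-quadrant cohomological spectral sequence
\[
  E_2^{p,q} = H^p\bigl(G/K, H^q(K,A)\bigr) \;\Longrightarrow\; H^{p+q}(G,A).
\]
This is the Grothendieck spectral sequence of the composite of the functors $A \mapsto A^K$ (from discrete $G$-modules to discrete $G/K$-modules) and $B \mapsto B^{G/K}$, whose composition is $A \mapsto A^G$; the one non-formal point is that $(-)^K$ carries injective discrete $G$-modules to $(G/K)$-acyclic discrete $G/K$-modules, which in turn rests on the fact that injective discrete $G$-modules are direct summands of coinduced modules and that taking $K$-invariants of a coinduced module again yields a coinduced $G/K$-module.

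Given the spectral sequence, the five-term sequence is purely formal. The edge maps produce an exact sequence
\[
  0 \to E_2^{1,0} \xrightarrow{\ \mathrm{inf}\ } H^1(G,A) \xrightarrow{\ \res\ } E_2^{0,1} \xrightarrow{\ d_2\ } E_2^{2,0} \to H^2(G,A),
\]
where the last arrow is the composite $E_2^{2,0} \twoheadrightarrow E_\infty^{2,0} \hookrightarrow H^2(G,A)$. Here $E_2^{1,0} = H^1(G/K, A^K)$, $E_2^{0,1} = H^0(G/K, H^1(K,A)) = H^1(K,A)^{G/K}$ and $E_2^{2,0} = H^2(G/K, A^K)$, and one checks that the two edge maps labelled $\mathrm{inf}$ and $\res$ are genuinely inflation and restriction. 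Substituting these identifications yields exactly the displayed sequence. I would carry out the verification of exactness at $H^1$ by chasing the filtration on $H^1(G,A)$ (only the steps $E_\infty^{1,0}$ and $E_\infty^{0,1}$ occur, and $E_\infty^{0,1} = \ker d_2$), and the exactness at $E_2^{0,1}$ and at $E_2^{2,0}$ from the definitions of $d_2$ and of the edge map into $H^2$.

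An alternative, spectral-sequence-free route would be: prove the inflation--restriction sequence $0 \to H^1(G/K,A^K) \to H^1(G,A) \to H^1(K,A)^{G/K}$ directly by manipulating continuous $1$-cocycles; then define the transgression $H^1(K,A)^{G/K} \to H^2(G/K, A^K)$ by lifting a $G/K$-invariant continuous $1$-cocycle on $K$ to a continuous $1$-cochain on $G$ and taking its coboundary, which is inflated from $G/K$ and takes values in $A^K$; and finally check the two remaining exactness statements by hand. The main obstacle in either approach is the continuity bookkeeping special to the profinite setting --- that the relevant cochains, lifts and the transgression cocycle can be chosen continuous, equivalently that $(-)^K$ is exact enough on injective discrete modules. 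Once this is in place everything reduces to the classical argument, so in the paper I would simply cite \cite[Corollary~7.2.5]{RibesZalesskii}.
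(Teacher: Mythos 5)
Your argument is correct and follows exactly the route of the source the paper cites: the paper gives no proof of its own, and \cite[Corollary~7.2.5]{RibesZalesskii} is derived there precisely as the five-term exact sequence of low-degree terms of the Lyndon--Hochschild--Serre spectral sequence for profinite cohomology, with the key non-formal input being that $(-)^K$ sends injective discrete $G$-modules to $G/K$-acyclics. Your identification of the edge maps and the exactness checks are the standard formal steps, so nothing further is needed beyond the citation.
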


\section{PFR groups and minimal extension growth}\label{sec:MinimalExtensionGrowth}


\subsection{Presentations of profinite groups}
We begin with a brief survey on \emph{profinite presentations}. We direct the reader to the excellent paper by Lubotzky \cite{lubotzky:prof_pres} for more details on this topic and we merely state the definition and a few properties below.
For our purpose we will use the following slightly restrictive definition of presentation, which allows only to consider finitely generated free profinite groups.
\begin{defn}
	Let $G$ be a finitely generated profinite group. A \emph{profinite presentation} of $G$ is a short exact sequence
	\[
	1 \longrightarrow R \longrightarrow F \longrightarrow G \longrightarrow 1
	\]
	where $F$ is a \emph{finitely generated} free profinite group.
	The profinite group $G$ will be called finitely presented, if there is a presentation such that the relation subgroup $R$ is finitely normally generated in $F$.
\end{defn}

One of the main differences between abstract and profinite presentations is described in \cite[Proposition~2.4]{lubotzky:prof_pres}. This essentially says that, 
fixing an integer $d\ge \mathrm{d}(G)$, the profinite group $G$ admits a unique profinite presentation with $d$ generators.
Moreover, it follows that if $G$ is finitely presented, then the relation subgroup $R$ of any presentation is finitely normally generated in $F$.

\subsection{Positively finitely related profinite groups}
Let $G$ be a finitely generated profinite group and fix a presentation  
\begin{equation}\label{eq:ses}
 1 \longrightarrow R \longrightarrow F \longrightarrow G \longrightarrow 1,
\end{equation}
denoted by $\mathcal{S}$, where $R$ is a normal subgroup of a suitable finitely generated free profinite group $F=F_d$. Let $k$ be a natural number and write $\mu$ for the Haar measure on $R^k$ normalized with $\mu(R^k)=1$. Define 
\begin{equation*}
  P_{\lhd}(\mathcal{S},k)= \mu\left(\left\{ (z_1,\ldots,z_k)\in R^k \ \vert \  \overline{\langle z_1,\ldots,z_k \rangle^{F}} = R \right\}\right).
\end{equation*}
The previous number could be interpreted as the ``probability that $k$ randomly chosen relations in $R$ already generate all relations of $G$''.

 We will say that a presentation $\mathcal{S}$ of the finitely generated profinite group $G$ is \emph{positively finitely related} (\emph{PFR}) if there exists $k\in \mathbb{N}$ such that $P_\lhd(\mathcal{S},k)>0$. In particular, such a group $G$ is finitely presented.

\begin{rmk}
 Note that a profinite group can have presentations which are PFR and presentations which are not. For instance, consider the following two presentations of the $2$-generated free profinite group $F_2$: the ``trivial'' $2$-generated presentation
 \begin{equation*}
  1\to 1 \to F_2 \to F_2 \to 1 \tag{$\mathcal{S}_{2}$} 
 \end{equation*}
and the $3$-generated presentation
 \begin{equation*}
  1\to R \to F_3 \to F_2 \to 1. \tag{$\mathcal{S}_{3}$} 
 \end{equation*}
Clearly $\mathcal{S}_{2}$ is trivially PFR, since $P_\lhd(\mathcal{S}_{2},1)=\mu_{\{1\}}(\{1\})=1$. However, we will see later that $P_\lhd(\mathcal{S}_{3},k)=0$ for every $k\in \mathbb{N}$.
\end{rmk}

\begin{defn}
A finitely generated profinite group is said to be \emph{positively finitely related} 
(\emph{PFR
}, for short) if each of its presentations is PFR.
\end{defn}

The following lemma shows that this definition is equivalent to the definition given in the introduction.
\begin{lem}
 A finitely generated profinite group $G$ is PFR if and only if the kernel $\ker(f)$ of any epimorphism $f\colon H \to G$ from any finitely generated profinite group $H$
 is positively finitely normally generated in~$H$.
\end{lem}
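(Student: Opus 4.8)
The lemma asserts the equivalence of two formulations of PFR: the "all presentations are PFR" version (Definition just stated) and the "all epimorphisms from finitely generated profinite groups have positively finitely normally generated kernel" version (from the introduction). I would prove this by reducing the apparently stronger second condition to the first, using the universal property of free profinite groups to factor an arbitrary epimorphism through a presentation.

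\smallskip

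\textbf{Outline of the argument.} The implication ``second condition $\Rightarrow$ first'' is immediate, since a presentation $1 \to R \to F \to G \to 1$ is in particular an epimorphism $F \twoheadrightarrow G$ from the finitely generated (free) profinite group $F$, and the condition ``$P_\lhd(\mathcal{S},k) > 0$ for some $k$'' is literally the statement that $R = \ker(F \twoheadrightarrow G)$ is positively finitely normally generated in $F$. For the converse, suppose every presentation of $G$ is PFR and let $f \colon H \twoheadrightarrow G$ be an epimorphism with $H$ finitely generated profinite. Choose a finitely generated free profinite group $F$ together with an epimorphism $\varphi \colon F \twoheadrightarrow H$ (possible since $H$ is finitely generated). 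Then $f \circ \varphi \colon F \twoheadrightarrow G$ is a presentation $\mathcal{S}$ of $G$ with relation subgroup $R := \ker(f \circ \varphi)$, and by hypothesis there is a $k$ with $P_\lhd(\mathcal{S},k) > 0$: that is, a positive-measure set of $k$-tuples in $R$ whose normal closure in $F$ equals $R$. Now push forward along $\varphi$. Writing $N := \ker(f) = \varphi(R)$, the restriction $\varphi|_R \colon R \to N$ is a continuous surjection, hence measure-preserving up to normalization (it sends Haar measure to Haar measure since both are the unique normalized Haar measures and $\varphi|_R$ is a continuous epimorphism of profinite groups), so the image of a positive-measure set of $k$-tuples in $R^k$ under $(\varphi|_R)^k$ has positive measure in $N^k$. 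A $k$-tuple $(z_1,\dots,z_k)$ with $\overline{\langle z_1,\dots,z_k\rangle^F} = R$ maps to a tuple $(\varphi(z_1),\dots,\varphi(z_k))$ whose normal closure in $H$ contains $\varphi(\overline{\langle z_1,\dots,z_k\rangle^F}) = \varphi(R) = N$; since these conjugates lie in $N$ anyway, the normal closure equals $N$. Hence $N = \ker(f)$ is positively finitely normally generated in $H$.

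\smallskip

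\textbf{Where the care is needed.} The only genuinely delicate point is the measure-theoretic push-forward: one must check that a continuous epimorphism $\psi \colon R \twoheadrightarrow N$ of profinite groups carries the normalized Haar measure on $R$ to the normalized Haar measure on $N$, so that the $\psi$-image of a positive-measure set is non-negligible (it need not itself be measurable a priori, but it contains, or is the image of, a set whose outer measure is positive; more cleanly, $\psi$ induces a surjection $R/\ker\psi \xrightarrow{\sim} N$ and one works with the induced partition, or one simply notes $\mu_N(\psi(A)) \ge \mu_R(A) > 0$ for $A$ measurable using that $\psi$ is open and measure-preserving onto its image). This follows from uniqueness of Haar measure together with the fact that the push-forward of a left-invariant probability measure under a continuous surjective homomorphism is again left-invariant. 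The remaining steps — that $\varphi$ maps the normal closure of a set into the normal closure of the image, and that a set already contained in $N$ generates $N$ as a normal subgroup of $H$ iff its normal closure in $H$ is all of $N$ — are formal. I would also remark that one may replace $H$ by a free cover at the very start, which is exactly the reduction made implicitly above, and this is essentially the content of the sentence in the introduction that ``it is sufficient to verify this property for all epimorphisms from finitely generated free profinite groups.''
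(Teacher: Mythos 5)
Your proposal is correct and follows essentially the same route as the paper: the forward direction is the observation that presentations are particular epimorphisms from free groups, and the converse factors an arbitrary epimorphism $f\colon H\to G$ through a free cover $F\twoheadrightarrow H$ and pushes positive finite normal generation of $\ker(f\circ\varphi)$ down to $\ker(f)$. The paper dismisses the push-forward step as ``easy to see'' (positive finite normal generation is closed under quotients), whereas you correctly supply the Haar-measure justification; your cleaner variant via preimages of the generating set is the right way to handle measurability.
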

\begin{proof}
 The ``if'' direction is clear by using for $H$ only the finitely generated free profinite groups.
 Conversely, assume that $G$ is PFR and let $f\colon H \to G$ be a surjective homomorphism, where $H$ is $d$-generated for some $d\in \NN$.
 Let $F$ be the finitely generated free profinite group on $d$ generators and let $p\colon F \to H$ be a surjective homomorphism.
 The homomorphism $f \circ p\colon F \to G$ yields a presentation of $G$ and by assumption the kernel $R = \ker(f \circ p)$ is positively finitely normally generated in $F$. It is easy to see that the property of being ``positively finitely normally generated'' is closed under quotients. 
 Consequently, the kernel $\ker(f) = p(R)$ is also positively finitely normally generated in $H$. 
 \end{proof}




\subsection{Maximal $F$-stable subgroups}

In \cite{MannShalev1996} Mann and Shalev characterise PFG groups in terms of maximal subgroup growth. Here we prove a first parallel result.

 Let $H$ be a profinite group and $R$ a closed normal subgroup of $H$.
 A closed subgroup $M \leq R$ is said to be \emph{maximal $H$-stable} in $R$ if it is properly contained in $R$, normal in $H$ and maximal with respect to this condition in $R$.
 It is easy to see that every maximal $H$-stable subgroup is open in $R$ and hence of finite index.
 Let $n$ be a natural number and write $ m_n^{H}(R)$ for the number of open maximal $H$-stable subgroups of $R$ with index $n$.
 Indeed, if $H$ is finitely generated and $R$ is finitely normally generated in $H$, then the numbers $m_n^{H}(R)$ are finite.
 The subgroup $R$ is said to have \emph{polynomial maximal $H$-stable subgroup growth} (P$\text{M}_{\lhd H}$SG, for short)
 if there exists a constant $c>0$ such that $$m_n^{H}(R) \le n^c \quad \text{for every $n$}.$$

\begin{prop}\label{prop:PFRiffPMNSG}
 A presentation $1\to R\to F \to G\to 1$ of a finitely generated profinite group $G$ is PFR if and only if $R$ has polynomial maximal $F$-stable subgroup growth.
 \end{prop}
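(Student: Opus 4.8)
Write $\mathcal{S}\colon 1\to R\to F\to G\to 1$ and let $\mu$ be the normalized Haar measure on $R^{k}$. A tuple $(z_{1},\dots,z_{k})\in R^{k}$ fails to normally generate $R$ precisely when all $z_{j}$ lie in some maximal $F$-stable subgroup $M$ of $R$ (every proper closed $F$-stable subgroup is contained in a maximal one, since $R$ is finitely normally generated in $F$); such an $M$ is open of index $[R:M]=n$, so $\mu(M^{k})=n^{-k}$, and there are only countably many of them. Hence
\[
  1-P_{\lhd}(\mathcal{S},k)=\mu\Big(\bigcup_{M}M^{k}\Big),
\]
the union over all maximal $F$-stable subgroups of $R$; the two directions of the proposition amount to an upper and a lower bound for this measure. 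If $m_{n}^{F}(R)\le n^{c}$ for all $n$, the union bound gives $1-P_{\lhd}(\mathcal{S},k)\le\sum_{n\ge2}m_{n}^{F}(R)n^{-k}\le\sum_{n\ge2}n^{c-k}<1$ once $k$ is large, so $\mathcal{S}$ is PFR.

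\textbf{The hard direction, via Borel--Cantelli.} Suppose $R$ does not have polynomial maximal $F$-stable subgroup growth. Since $P_{\lhd}(\mathcal{S},k)$ is non-decreasing in $k$ (pad a generating tuple), it suffices to show $P_{\lhd}(\mathcal{S},k)=0$ for each fixed $k$. As every $m_{n}^{F}(R)$ is finite, the hypothesis yields a strictly increasing sequence $n_{1}<n_{2}<\cdots$ with $m_{n_{i}}^{F}(R)>n_{i}^{\,i}$. Let $B_{i}\subseteq R^{k}$ be the event that the random tuple lies in some maximal $F$-stable subgroup of $R$ of index $n_{i}$. The plan is to prove that the $B_{i}$ are pairwise independent with $\mu(B_{i})\ge\delta>0$ for all large $i$; then $\sum_{i}\mu(B_{i})=\infty$ and Borel--Cantelli (Lemma~\ref{lem:borelcantelli}\eqref{it:BorelCantelli2}) forces almost every tuple into infinitely many $B_{i}$, whence $P_{\lhd}(\mathcal{S},k)=0$ and $\mathcal{S}$ is not PFR.

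\textbf{Independence and the lower bound.} Let $\Phi$ be the intersection of all maximal $F$-stable subgroups of $R$, so that $R/\Phi$ is a completely reducible $F$-operator group --- the (internal) direct product $\prod_{\beta}Q_{\beta}$ of its $F$-homogeneous components --- with $R\twoheadrightarrow\prod_{\beta}Q_{\beta}$; thus the images of the random tuple in distinct $Q_{\beta}$ are mutually independent. Every maximal $F$-stable $M$ contains $\Phi$ and all homogeneous components except the single $Q_{\beta_{M}}$ with $R/M\cong_{F}\beta_{M}$ (any other $Q_{\beta}$ has trivial image in the $F$-chief factor $R/M$); hence $B_{i}$ depends only on the image of the tuple in $\prod_{|\beta|=n_{i}}Q_{\beta}$, and as the $n_{i}$ are distinct the $B_{i}$ are (pairwise, in fact mutually) independent. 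Moreover, modulo $\Phi$, the maximal $F$-stable subgroups of $R$ of index $n_{i}$ are exactly the maximal $F$-subgroups of those components $Q_{\beta}$ with $|\beta|=n_{i}$, so $\sum_{|\beta|=n_{i}}N_{\beta}=m_{n_{i}}^{F}(R)>n_{i}^{\,i}$, where $N_{\beta}$ counts the maximal $F$-subgroups of $Q_{\beta}\cong\beta^{\,m_{\beta}}$. A direct count yields $\mu(\{k\text{-tuple fails to generate }Q_{\beta}\text{ as an }F\text{-group}\})\ge c\min(1,N_{\beta}n_{i}^{-k})$ for an absolute constant $c>0$: for $\beta$ abelian a rank computation over the finite field $\mathrm{End}_{F}(\beta)$, and for $\beta=S^{\ell}$ the parallel count using $\mathrm{Aut}(S^{\ell})\cong\mathrm{Aut}(S)^{\ell}\rtimes\Sym(\ell)$ (Lemma~\ref{lem:automorphismsSk}). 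Since generating $\prod_{|\beta|=n_{i}}Q_{\beta}$ means generating each $Q_{\beta}$ and these coordinates are independent, $\mu(B_{i})\ge 1-\exp\!\big(-c\sum_{|\beta|=n_{i}}\min(1,N_{\beta}n_{i}^{-k})\big)$; and $\sum_{|\beta|=n_{i}}N_{\beta}>n_{i}^{\,i}$ forces the exponent to be $\ge c$ once $i>k$, so $\mu(B_{i})\ge 1-e^{-c}=:\delta>0$.

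\textbf{Main obstacle.} The real content lies in the generation estimate $\mu(\{k\text{-tuple fails to generate }Q_{\beta}\})\ge c\min(1,N_{\beta}n_{i}^{-k})$, uniformly in all parameters; for the non-abelian components this is the point where maximal $F$-stable subgroup growth gets linked to ordinary subgroup growth (cf.\ Section~\ref{sec:MinimalNon-abelianExt}), and it is the delicate part. Alongside it one must make precise the structure theory invoked for independence: that $R/\Phi$ splits as the direct product of its $F$-homogeneous components, and that every maximal $F$-stable subgroup of $R$ contains all of these components but one. The abelian case of the estimate and the ``polynomial growth $\Rightarrow$ PFR'' direction are routine.
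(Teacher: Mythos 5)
The forward direction (polynomial maximal $F$-stable subgroup growth implies PFR) is correct and is the same argument as in the paper: a tuple fails to normally generate $R$ iff it lies in $M^k$ for some maximal $F$-stable $M$, and the union bound gives $1-P_{\lhd}(\mathcal{S},k)\le\sum_{n\ge2}n^{c-k}<1$ for $k$ large.

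For the converse, your strategy (Borel--Cantelli applied to pairwise independent events of probability bounded below) is sound in outline, but as written it has genuine gaps, and it misses the one observation that makes this direction easy. The gaps: you explicitly defer both the structure theory --- that $R/\Phi$ is the direct product of its $F$-homogeneous components and that every maximal $F$-stable subgroup contains all components but one; this is essentially the theory of crowns, and in the profinite setting with possibly infinitely many components it requires a real proof --- and the quantitative estimate $\mu(\{k\text{-tuple fails to generate }Q_{\beta}\})\ge c\min(1,N_{\beta}n_i^{-k})$, which in the non-abelian case is the hard core of the whole Mann--Shalev style analysis. An argument that ends by labelling its key estimate ``the delicate part'' is not yet a proof.

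The missed observation is this: if $M$ and $M'$ are two \emph{distinct} maximal $F$-stable subgroups of $R$, then $MM'$ is a closed subgroup of $R$ which is normal in $F$ and properly contains $M$, so $MM'=R$ by maximality, whence $[R:M\cap M']=[R:M][R:M']$. Therefore the individual events $\{(z_1,\dots,z_k)\in M^k\}$, indexed by the maximal $F$-stable subgroups themselves, are already pairwise independent --- no grouping by index, no homogeneous components, no lower bounds on grouped events. Since failure of polynomial growth yields, for each $k$, infinitely many $n$ with $m_n^{F}(R)>n^{k}$, the probabilities $[R:M]^{-k}$ sum to $\sum_n m_n^{F}(R)n^{-k}=\infty$, and Lemma~\ref{lem:borelcantelli}\eqref{it:BorelCantelli2} applied directly to these events gives $P_{\lhd}(\mathcal{S},k)=0$ for every $k$. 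This is precisely where ``normal in $F$'' is much easier than ``maximal subgroup'' in the PFG setting: products of normal subgroups are normal, so maximality forces pairwise independence for free. Your detour through $F$-homogeneous components reconstructs the machinery genuinely needed for PFG, but none of it is needed here.
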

 \begin{proof}
  Fix a presentation $\mathcal{S}$ as in the statement of the proposition. Suppose first that $m_n^{F}(R) \le n^c$. 
  Write $\mathcal{M}$ for the set of maximal $F$-stable open subgroups of $R$. This is a countable set by assumption.
  
 Observe that a $k$-tuple $(z_1,\ldots,z_k)\in R^k$ fails to normally generate $R$ if and only if the elements $z_1,\ldots,z_k$ are contained in some $M \in\mathcal{M}$. 
In particular, we have
\[
    1-P_\lhd(\mathcal{S},k) \le \sum_{M\in \mathcal{M}} \frac{1}{\norm{R:M}^k} = \sum_{n\geq 2} \frac{m_n^{F}(R)}{n^k} \le \sum_{n\geq 2} n^{c-k}.
  \]
 The last sum is strictly less than one for any $k \ge k_0= \lceil c+2 \rceil$ and therefore $P_\lhd(\mathcal{S},k_0)>0$.

 

For the converse, suppose that $R$ does not have P$\text{M}_{\lhd F}$SG. We can assume that $R$ is finitely normally generated, otherwise $\mathcal{S}$ is not PFR. As before, denote by $\mathcal{M}=\{M_i \vert i\in \mathbb{N}\}$ the countable set of open maximal $F$-stable subgroups of $R$. By assumption the series $\sum_{n\in \mathbb{N}} m_n^{F}(R)/n^k$ diverges for every $k\in \mathbb{N}$. The events defined by the maximal $F$-stable subgroups are pairwise
 independent\footnote{This step is much simpler than in the setting of PFG groups.
 Indeed, for two distinct $M_i, M_j\in \mathcal{M}$ the product $M_i \cdot M_j$ is a normal subgroup in $F$ and hence equals $R$ by maximality. We conclude $[R:M_i\cap M_j]=[R:M_i][R:M_j]$.}.
By the second Borel-Cantelli lemma (Lemma~\ref{lem:borelcantelli}\eqref{it:BorelCantelli2}), it follows that for every $k$
\[
   \mu\left( \bigcap_{n\in \mathbb{N}} \bigcup_{m\ge n} M_m^k \right)= 1.
\]
 On the other hand, 
 if $k$ elements from $R$ lie in infinitely many open maximal $F$-stable subgroups of $R$, for sure the normal closure of the subgroup they generate will not be the whole of $R$. In particular,
 $$1-P_\lhd(\mathcal{S},k) \ge \mu\left( \bigcap_{n\in \mathbb{N}} \bigcup_{m\ge n} M_m^k \right)= 1$$ and $\mathcal{S}$ is not PFR.
 \end{proof}

\subsection{Counting minimal extensions}\label{sec:minimalExtensionGrowth1}
In the following a short exact sequence $1\to K \to E \to G \to 1$ of profinite groups 
will be called an \emph{extension of} $G$ by $K$.
We say that the extension is minimal, if the group $K$ is a finite minimal normal subgroup of $E$.
In this case $K$ is a characteristically simple finite group, in particular, it is isomorphic to a direct product of simple groups.
The extension is called $d$-generated, if the group $E$ can be generated by $d$ elements. The order $|K|$ will be called the \emph{degree} of the extension.

We will count the \emph{isomorphism classes} of minimal extensions of a group $G$. Recall that two extensions 
$1\to K_1 \to E_1 \stackrel{\pi_1}{\longrightarrow} G \to 1$ and $1\to K_2 \to E_2 \stackrel{\pi_2}{\longrightarrow} G \to 1$
of $G$ are isomorphic, if there is an isomorphism $f\colon E_1 \to E_2$ such that  $\pi_2 \circ f = \pi_1$.
It is important to note that this notion of isomorphism is weaker than the usual \emph{equivalence} of extensions.

Let $n$ be a natural number.
We write $e_n^{\mathrm{min}}(G)$ to denote the number of isomorphism classes of minimal extensions of degree $n$ of $G$.
For $d \geq d(G)$, we write $e_{n,d}^{\mathrm{min}}(G)$ to denote the number of
minimal $d$-generated extensions of degree $n$ of $G$.

\begin{lem}\label{lem:minimalExtensionsBasic}
 Let $1\to K \to E \to G \to 1$ be a minimal extension of a $d$-generated profinite group $G$.
 Then $E$ is $(d+2)$-generated. 
 
 If $K$ is abelian, then it is elementary abelian. In this case $E$ is $(d+1)$-generated and 
 the action of $G$ on $K$ defines an irreducible representation of $G$.
\end{lem}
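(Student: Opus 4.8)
The plan is to treat the two assertions separately — first the unconditional bound $d(E)\le d+2$, then the elementary-abelian refinement — in both cases lifting a $d$-element topological generating set of $G$ to $E$ and adjoining a controlled number of elements of $K$.

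First I would record the structural facts that feed both parts. A finite minimal normal subgroup $K\trianglelefteq E$ is characteristically simple, hence, being finite, a direct product $S_1\times\dots\times S_m$ of copies of a simple group $S$. If $S$ is non-abelian, the $S_i$ are exactly the minimal normal subgroups of $K$ (the fact established in the first line of the proof of Lemma~\ref{lem:automorphismsSk}), so conjugation gives an action of $E$ on the set $\{S_1,\dots,S_m\}$; since $K$ normalises each of its direct factors, this action factors through $G=E/K$, and it is transitive, for the product of the $S_i$ over an $E$-orbit is a normal subgroup of $E$ contained in $K$, hence all of $K$ by minimality. If instead $S$ is abelian it is cyclic of prime order $p$: choosing a prime $p\mid|K|$, the subgroup of $p$-torsion of $K$ is characteristic and nontrivial, so equals $K$, and an abelian group of exponent $p$ is an $\FF_p$-vector space, so $K$ is elementary abelian. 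In that case conjugation makes $K$ an $\FF_p[E]$-module on which $K$ acts trivially, hence an $\FF_p[G]$-module whose submodules are precisely the $E$-invariant subgroups of $K$; by minimality there are only the trivial ones, so this module — equivalently the induced homomorphism $G\to\GL_m(\FF_p)$ — is irreducible.

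For the generating bounds I fix $y_1,\dots,y_d\in E$ whose images topologically generate $G$. In the non-abelian case I choose $a,b$ generating the factor $S_1$, which is possible because every finite simple group is $2$-generated; then $L:=\overline{\langle y_1,\dots,y_d,a,b\rangle}$ surjects onto $G$, so $LK=E$, and the image of $L$ in $\Sym(m)$ coincides with the (transitive) image of $G$. Since $S_1\le L$ and $L$ normalises $K$, every conjugate $lS_1l^{-1}$ with $l\in L$ again lies in $L$, and by transitivity these conjugates exhaust the $S_i$; hence $K=\langle S_1,\dots,S_m\rangle\le L$ and therefore $L=E$, so $d(E)\le d+2$. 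In the abelian case I instead adjoin a single $a\in K\setminus\{1\}$ and put $L:=\overline{\langle y_1,\dots,y_d,a\rangle}$: then $L$ normalises $K_0:=L\cap K$, and $K$ normalises $K_0$ as well since $K$ is abelian, so $K_0$ is normalised by $LK=E$; as $1\ne a\in K_0$, minimality forces $K_0=K$, whence $K\le L$ and $L=E$, giving $d(E)\le d+1$ — in particular still $\le d+2$, so the unconditional bound holds in all cases.

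The only ingredient that is not mere bookkeeping with normal subgroups is the input — a consequence of the classification of finite simple groups — that every finite simple group is generated by two elements; this is exactly what lets one generating pair of the single factor $S_1$ propagate to all of $K$ once the transitivity of $E$ on the factors has been set up, and that transitivity (together with the fact that the action factors through $G$) is the one step I would be most careful to justify.
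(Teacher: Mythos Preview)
Your argument is correct and follows essentially the same route as the paper's proof: split into the non-abelian and abelian cases, use transitivity of $E$ on the simple factors together with $2$-generation of finite simple groups for the former, and the irreducibility of $K$ as an $\FF_p[G]$-module for the latter. You spell out in more detail the steps the paper leaves implicit (why the $L$-conjugates of $S_1$ exhaust all factors, why $L\cap K$ is $E$-normal in the abelian case), but the skeleton is identical.
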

\begin{proof}
As $K$ is characteristically simple, it is isomorphic to a direct product $K \cong S^k$ where $S$ is a finite simple group.
If $S$ is non-abelian, then the action of $E$ on the factors of $S^k$ is transitive. Since $S$ is $2$-generated (see \cite[Theorem~B]{AschbacherFirstCohomologyGroup}),
the group $E$ can be generated by $d+2$ elements.

Now assume that $K$ is abelian.
Since $K$ is minimal normal in $E$, it does not have any proper characteristic subgroup. Hence $K$ is elementary abelian, 
i.e.\ a finite dimensional $\mathbb{F}_p$-vector space for some prime~$p$. 
Suppose that $W$ is a proper $G$-stable subspace of $K$,
then $W$ is a normal subgroup of $E$ contained in $K$, against the minimality of~$K$. Therefore $W$ must be trivial and the action of $G$ on~$K$ irreducible.
In this case $E$ can be generated by $d+1$ elements. Indeed, one can lift a minimal generating set of $G$ to $E$ and add any non-trivial element of~$K$ to obtain a generating set for $E$.
\end{proof}

\begin{cor}\label{cor:d+2lemma}
 Let $G$ be a $d$-generated profinite group. Then
 \begin{equation*}
   e_n^{\mathrm{min}}(G) = e_{n,d+2}^{\mathrm{min}}(G) \quad \text{for all $n \in \NN$}.
 \end{equation*}
\end{cor}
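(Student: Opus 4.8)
The plan is to show the two quantities agree by exhibiting inequalities in both directions, using Lemma~\ref{lem:minimalExtensionsBasic} for one direction and a trivial observation for the other.

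First I would establish the inequality $e_{n,d+2}^{\mathrm{min}}(G) \le e_n^{\mathrm{min}}(G)$. This is immediate: the $d+2$-generated minimal extensions of degree $n$ form a subset of all minimal extensions of degree $n$, and the notion of isomorphism used in both counts is the same (isomorphism $f\colon E_1 \to E_2$ with $\pi_2 \circ f = \pi_1$). Since isomorphic groups have the same minimal number of generators, ``being $d+2$-generated'' is a property of the isomorphism class, so the count $e_{n,d+2}^{\mathrm{min}}(G)$ is well-defined as a count of isomorphism classes and is bounded above by $e_n^{\mathrm{min}}(G)$.

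Next I would establish the reverse inequality $e_n^{\mathrm{min}}(G) \le e_{n,d+2}^{\mathrm{min}}(G)$. Here I invoke Lemma~\ref{lem:minimalExtensionsBasic}: every minimal extension $1 \to K \to E \to G \to 1$ of a $d$-generated group $G$ has $E$ generated by $d+2$ elements (in the abelian case even $d+1$, which is fine since we only need an upper bound of $d+2$). Hence every isomorphism class of minimal extension of degree $n$ already consists of $d+2$-generated extensions, so it is counted by $e_{n,d+2}^{\mathrm{min}}(G)$. Combining the two inequalities gives the equality for all $n \in \NN$.

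I do not expect any real obstacle here; the corollary is essentially a bookkeeping restatement of Lemma~\ref{lem:minimalExtensionsBasic}. The only point requiring a moment's care is to confirm that the definition of $e_{n,d+2}^{\mathrm{min}}(G)$ counts \emph{isomorphism} classes (consistent with $e_n^{\mathrm{min}}(G)$) rather than some finer equivalence, and that ``$d+2$-generated'' depends only on the isomorphism class of $E$ — both of which are clear from the definitions given just before the statement.
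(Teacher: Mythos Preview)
Your proposal is correct and matches the paper's approach: the corollary is stated without proof in the paper, as an immediate consequence of Lemma~\ref{lem:minimalExtensionsBasic}, which is exactly what you spell out.
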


\begin{defn}\label{def:PMEG}
We say that $G$ has \emph{polynomial minimal extension growth},
if there is a constant $\alpha > 0$ such that $ e_n^{\mathrm{min}}(G) \leq n^\alpha$ for all $n \in \NN$.
\end{defn}

The next result implies the equivalence of \eqref{thmA:PFR} and \eqref{thmA:PMEG} in Theorem~\ref{thm:A}.
\begin{thm}\label{thm:minimalExtensionGrowth}
 Let $G$ be a finitely generated profinite group. The following are equivalent:
 \begin{enumerate}
  \item $G$ is PFR
  \item $G$ has polynomial minimal extension growth
  \item $G$ has polynomial minimal $(d(G)+2)$-generated extension growth.
 \end{enumerate}
\end{thm}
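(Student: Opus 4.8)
The plan is to link the probabilistic quantity $P_\lhd(\mathcal S,k)$ to a count of minimal extensions via the correspondence between maximal $F$-stable subgroups of $R$ and minimal extensions of $G$, and then combine this with Proposition~\ref{prop:PFRiffPMNSG}. The implications $(2)\Rightarrow(3)$ is immediate from the inequality $e_{n,d+2}^{\mathrm{min}}(G)\le e_n^{\mathrm{min}}(G)$ (any $(d+2)$-generated minimal extension is in particular a minimal extension), so the real content is $(1)\Leftrightarrow(3)$, together with $(2)\Rightarrow(1)$ (or, equivalently, $(3)\Rightarrow(2)$, which by Corollary~\ref{cor:d+2lemma} is automatic once $d = d(G)$). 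By Corollary~\ref{cor:d+2lemma}, $e_n^{\mathrm{min}}(G) = e_{n,d(G)+2}^{\mathrm{min}}(G)$, so conditions $(2)$ and $(3)$ are literally the same statement; hence it suffices to prove $(1)\Leftrightarrow(2)$.

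First I would fix a presentation $1\to R\to F\to G\to 1$ with $F = F_{d}$ free of rank $d = d(G)$; by the uniqueness of profinite presentations with a fixed number of generators (cited after the definition of profinite presentation) and by Theorem~\ref{thm:B} hidden in Proposition~\ref{prop:PFRiffPMNSG}, $G$ is PFR if and only if \emph{this} presentation is PFR, which by Proposition~\ref{prop:PFRiffPMNSG} holds if and only if $R$ has polynomial maximal $F$-stable subgroup growth, i.e.\ $m_n^F(R)\le n^c$ for some $c$. So the theorem reduces to the combinatorial identity relating $\sum_n m_n^F(R)$ to $\sum_n e_n^{\mathrm{min}}(G)$. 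The key step is to set up a bijection (or at least a growth-preserving correspondence) between open maximal $F$-stable subgroups $M\le R$ of index $n$ and isomorphism classes of minimal $d$-generated extensions of $G$ of degree $n$: given such an $M$, the quotient $F/M$ together with the normal subgroup $R/M$ gives a short exact sequence $1\to R/M\to F/M\to G\to 1$, and maximality of $M$ says exactly that $R/M$ is a minimal normal subgroup of $F/M$, so this is a minimal extension, and it is $d$-generated since $F$ is. Conversely, a minimal $d$-generated extension $1\to K\to E\to G\to 1$ can be pulled back along $F\twoheadrightarrow G$ (lifting generators, using freeness of $F$) to a surjection $F\twoheadrightarrow E$ whose kernel $M$ satisfies $R/M\cong K$ and is minimal normal in $F/M\cong E$; one checks $M$ is maximal $F$-stable in $R$.

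The main obstacle — and the step I would spend the most care on — is verifying that this correspondence descends to a genuine bijection on \emph{isomorphism classes} of extensions (in the weak sense defined in Section~\ref{sec:minimalExtensionGrowth1}, where only a compatible isomorphism $E_1\to E_2$ over $G$ is required), as opposed to equivalence classes. Two maximal $F$-stable subgroups $M_1, M_2$ give isomorphic extensions precisely when there is an automorphism of $F/?$ ... more precisely when $M_2 = \sigma(M_1)$ for some automorphism $\sigma$ of $F$ lying over the identity of $G$; since $F$ is free of rank $d = d(G)$, such automorphisms are plentiful, and one must argue that the correspondence $M\mapsto (F/M\to G)$ is surjective onto isomorphism classes and finite-to-one with bounded multiplicity on each degree — or, more cleanly, count the fibres and absorb the discrepancy into the polynomial bound. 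In fact for the purpose of polynomial-versus-superpolynomial growth it is enough to establish inequalities in both directions: every minimal $d$-generated extension of degree $n$ arises from at least one $M$ of index $n$ (giving $e_{n,d}^{\mathrm{min}}(G)\le m_n^F(R)$, hence $(1)\Rightarrow(2)$ using $d\le d(G)+2$ after possibly enlarging $d$), and conversely the number of $M$ of index $n$ mapping to a fixed isomorphism class is bounded by the number of ways to re-identify, which can be crudely bounded by $|\mathrm{Aut}(E)|$ or by a function polynomial in $n$, giving $m_n^F(R)\le (\text{poly in }n)\cdot e_{n,d}^{\mathrm{min}}(G)$, hence $(2)\Rightarrow(1)$. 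I would present the argument via these two polynomial-up-to correspondences rather than a strict bijection, since that is all the equivalence of \emph{polynomial} growth requires and it sidesteps the delicate bookkeeping of the weak isomorphism relation. Finally, a small point to handle at the start: if $R$ is not finitely normally generated then $G$ is not finitely presented, both $(1)$ and $(2)$ fail (for $(2)$, infinitely many minimal extensions of bounded degree appear), so one may assume finite presentation throughout.
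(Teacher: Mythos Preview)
Your overall architecture---relate maximal $F$-stable subgroups of $R$ to minimal $d$-generated extensions via the quotient $F/M$, bound in both directions, then invoke Proposition~\ref{prop:PFRiffPMNSG} and Corollary~\ref{cor:d+2lemma}---is exactly the paper's. But there is a genuine error in the reduction step.

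You fix $d = d(G)$ and assert that $G$ is PFR if and only if the $d(G)$-generator presentation is PFR. This is false. The paper's own Remark after the definition of PFR gives the counterexample: $F_2$ has a PFR $2$-generator presentation (the trivial one) but its $3$-generator presentation is not PFR, so $F_2$ is \emph{not} PFR. In the paper's language of Section~\ref{sec:weaklyPFR}, you are conflating PFR with \emph{weakly} PFR. Uniqueness of the $d$-generator presentation only tells you the $d(G)$-presentation is well-defined, not that it detects PFR; and nothing in Proposition~\ref{prop:PFRiffPMNSG} (or Theorem~\ref{thm:B}) says this either. Concretely, your route would only yield polynomial growth of $e^{\mathrm{min}}_{n,d(G)}(G)$ from $(1)$, which does not control $e^{\mathrm{min}}_{n}(G)$.

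The fix is simple and is what the paper does: run the correspondence for the $d$-generator presentation with \emph{arbitrary} $d\ge d(G)$. One gets
\[
e^{\mathrm{min}}_{n,d}(G)\ \le\ m_n^{F_d}(R)\ \le\ n^{d}\, e^{\mathrm{min}}_{n,d}(G),
\]
so by Proposition~\ref{prop:PFRiffPMNSG} the $d$-generator presentation is PFR iff $e^{\mathrm{min}}_{n,d}(G)$ is polynomially bounded. Now $(2)\Rightarrow(1)$ follows because $e^{\mathrm{min}}_{n,d}(G)\le e^{\mathrm{min}}_{n}(G)$ for every $d$, hence every presentation is PFR; and $(1)\Rightarrow(3)$ follows by applying this to $d=d(G)+2$ specifically. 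Your parenthetical ``after possibly enlarging $d$'' shows you sensed this, but it contradicts the reduction you set up two sentences earlier.

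Two smaller points. First, the fibre bound: the number of maximal $F$-stable $M$ giving a fixed isomorphism class of extension $1\to K\to E\to G\to 1$ is at most the number of surjections $F_d\twoheadrightarrow E$ over $G$, and each such surjection is determined by lifting the $d$ generators into the fibre $K$, giving the clean bound $|K|^d=n^d$ (this is where Gasch\"utz' Lemma enters, to guarantee at least one surjection exists). Your suggestion of bounding by $|\mathrm{Aut}(E)|$ goes the wrong way: automorphisms of $E$ over $G$ identify surjections with the same kernel, so more automorphisms means \emph{fewer} $M$'s, not more---and in any case $|\mathrm{Aut}(E)|$ is not polynomial in $|K|$. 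Second, your closing remark about the non-finitely-presented case is unnecessary: the inequalities above hold regardless, and the equivalence goes through without that case split.
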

\begin{proof}
We want to relate the minimal extensions of $G$ to the maximal $F$-stable subgroups of a given presentation.
Let $\mathcal{S}$ be the presentation
\begin{equation*}
  1\longrightarrow  R \longrightarrow F_d \longrightarrow G \longrightarrow 1 
\end{equation*}
of the finitely generated profinite group $G$.
To each open maximal $F_d$-stable subgroup $M$ of $R$ with index $n$ we can associate an extension $1 \to R/M \to F_d/M \to G \to 1$
of $G$. By construction this is a $d$-generated minimal extension of $G$ of degree $n$.

Indeed, every such extension can be obtained in this way.
Let
$1\to K \to E \to G \to 1$ be a minimal $d$-generated extension of $G$.  It follows from Gasch\"utz' Lemma (see e.g.~\cite[Lem.2.1]{lubotzky:prof_pres}) that there is a \emph{surjective} homomorphism
$f\colon F_d \to E$ such that the following diagram commutes
\begin{equation*}
\begin{CD}
 1 @>>> R @>>> F_d @>>> G @>>> 1\\
   @.  @VV{f_{|R}}V   @VV{f}V      @|     @. \\
 1 @>>> K @>>> E @>>> G @>>> 1.\\
\end{CD}
\end{equation*}
Thus $M = \mathrm{ker}(f)$ is an open maximal $F$-stable subgroup of $R$.
There are at most $|K|^d$ epimorphisms $f$ as above.
In particular, we obtain the inequalities
\begin{equation*}
 e_{n,d}^{\mathrm{min}}(G) \leq m_n^{F_d}(R) \leq  n^d e_{n,d}^{\mathrm{min}}(G).
\end{equation*}
Furthermore, by Proposition~\ref{prop:PFRiffPMNSG} the presentation $\mathcal{S}$ is PFR exactly if $G$ has polynomial minimal $d$-generated extension growth.
The theorem follows from Corollary \ref{cor:d+2lemma} and the fact that $e_{n,d}^{\mathrm{min}}(G) \leq e_{n,d+2}^{\mathrm{min}}(G)$.
\end{proof}

\section{Minimal non-abelian extensions}\label{sec:MinimalNon-abelianExt}
An extension $1\to K \to E \to G \to 1$ will be called \emph{abelian} if $K$ is abelian. Otherwise, we will call the extension \emph{non-abelian}. In this section we investigate the minimal non-abelian extensions of a finitely generated profinite group $G$.
We write $e_n^{\mathrm{min,n.a.}}(G)$ to denote the number of minimal non-abelian extensions of $G$ of degree $n$. If this sequence of numbers admits a polynomial bound in $n$, we say that $G$ has polynomial minimal non-abelian extension growth.

\begin{thm}\label{thm:non-abelianExtensionGrowth}
 A finitely generated profinite group $G$ has polynomial minimal non-abelian extension growth if and only if it has at most exponential subgroup growth.
\end{thm}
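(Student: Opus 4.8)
The plan is to prove both implications by relating minimal non-abelian extensions of $G$ to the subgroup structure of $G$, via the ``monolithic'' building blocks $S^k \rtimes G$. Recall from Lemma~\ref{lem:minimalExtensionsBasic} that in a minimal non-abelian extension $1 \to K \to E \to G \to 1$ the kernel $K$ is a product $S^k$ with $S$ a non-abelian simple group, and $E$ acts transitively on the $k$ simple factors. Since $S$ has trivial centre, so does $S^k$, and hence Lemma~\ref{lem:coupling} applies: isomorphism classes of such extensions are parametrised by conjugacy classes of couplings $\chi \colon G \to \Out(S^k) \cong \Out(S)^k \rtimes \Sym(k)$ (Lemma~\ref{lem:automorphismsSk}), subject to the transitivity constraint that the composite $G \to \Out(S^k) \to \Sym(k)$ has transitive image. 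The degree of the extension is $|S|^k$.

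First I would bound $e_n^{\mathrm{min,n.a.}}(G)$ from above in terms of subgroup growth of $G$. Write $n = |S|^k$, so $k \le \log_{60} n$ and $|S| \le n$. Given such a coupling, let $H \le G$ be the preimage of the point stabiliser $\Sym(k-1) \ltimes (\text{stuff})$, more precisely the stabiliser in $G$ of the first simple factor; then $[G:H] = k \le \log_{60}n$, so $H$ is an open subgroup of very small index, and by the standard fact that $G$ has exponential subgroup growth iff every open subgroup does (and the number of subgroups of index $k$ is itself bounded polynomially when subgroup growth is exponential), the number of choices of $H$ is small. Having fixed $H$, the coupling is determined by a homomorphism $H \to \Out(S)$ (describing the action on the stabilised factor) together with coset-representative data, and the number of homomorphisms $H \to \Out(S)$ is bounded using $|\Out(S)| \le c\log|S| \le c\log n$ together with $d(H) \le d(G) + k$; this yields at most $(\log n)^{O(d(G) + \log n)}$, which is sub-exponential and in particular dominated by the (at most exponential) count of the open subgroups $H$. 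The simple group $S$ itself is recovered from $n$ and $k$ (only finitely many simple groups of each order). Assembling these estimates gives $e_n^{\mathrm{min,n.a.}}(G) \le n^{O(1)} \cdot |\{H \le G : [G:H] \le \log_{60}n\}|$, so exponential subgroup growth of $G$ forces polynomial minimal non-abelian extension growth.

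Conversely, I would produce many minimal non-abelian extensions from many subgroups. Suppose $G$ has super-exponential subgroup growth, so $a_m(G)$, the number of open subgroups of index $m$, is not bounded by $c^m$ for any $c$. Fix a non-abelian simple group $S$ (say $S = A_5$, $|S| = 60$). Given an open subgroup $H \le G$ of index $m$, form the permutation action of $G$ on the $m$ cosets of $H$, i.e.\ a homomorphism $G \to \Sym(m)$ with transitive image, and embed $S^m \rtimes \Sym(m) \hookrightarrow \Aut(S^m)$, so that via $G \to \Sym(m)$ we get a coupling $G \to \Out(S^m)$ with transitive image in $\Sym(m)$. By Lemma~\ref{lem:coupling} (existence part, Step 5) this coupling is realised by a minimal non-abelian extension of degree $60^m$. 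Distinct conjugacy classes of point-stabilisers $H$ give non-conjugate couplings, hence non-isomorphic extensions; accounting for the finitely many subgroups in each conjugacy class, one obtains $e_{60^m}^{\mathrm{min,n.a.}}(G) \ge a_m(G)/|G : N_G(H)| \ge a_m(G)/m!$ type bounds — one must be a little careful, but since $a_m(G)$ grows faster than exponentially while $m! \le m^m$ is handled by reindexing, the sequence $e_n^{\mathrm{min,n.a.}}(G)$ cannot be polynomially bounded along the subsequence $n = 60^m$.

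The main obstacle I expect is the bookkeeping in both directions involving $\Sym(k)$: in the upper bound, controlling the ``coset representative data'' attached to a fixed stabiliser $H$ (the choice of a suitable $1$-cocycle-like datum landing in $\Out(S)^{k}$) and checking it contributes only a sub-exponential factor; and in the lower bound, showing that non-conjugate subgroups really do yield non-isomorphic extensions rather than merely non-equivalent ones, which is exactly where the ``isomorphism = conjugacy class of coupling'' half of Lemma~\ref{lem:coupling} must be invoked carefully, together with the observation that the isomorphism type of the extension remembers the $G$-set structure on the set of simple factors, hence remembers $H$ up to conjugacy. Matching the two polynomial-versus-exponential thresholds cleanly (exponential subgroup growth $\leftrightarrow$ polynomial extension growth, with the logarithmic change of variable $n = |S|^k$) is the conceptual heart, and I would isolate it as the key computation.
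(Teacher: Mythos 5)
Your overall strategy is exactly the paper's: for the lower bound, the split extensions $S^{G/H}\rtimes G$ attached to open subgroups $H$, distinguished up to isomorphism by the conjugacy class of $H$ via Lemma~\ref{lem:coupling}; for the upper bound, the fibration of isomorphism classes of extensions by $S^k$ over conjugacy classes of index-$k$ subgroups (point stabilizers of the transitive $G$-action on the simple factors), using Lemma~\ref{lem:automorphismsSk}, the finiteness of the number of simple groups of a given order, and a bound on $\lvert\Out(S)\rvert$ in terms of $\lvert S\rvert$. However, your quantitative bookkeeping in the upper bound has a genuine gap. You bound the fibre over a fixed stabilizer class $[H]$ by $\lvert\Out(S)\rvert^{d(H)}\cdot(\text{coset data})\le(\log n)^{O(d(G)+\log n)}$ and declare this acceptable because it is ``sub-exponential''. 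But $(\log n)^{\log n}=n^{\log\log n}$ is \emph{super-polynomial} in $n$, and the target is a polynomial bound on $e_n^{\mathrm{min,n.a.}}(G)$: the number of admissible stabilizers $H$ (index at most $\log_{60}n$) is itself only polynomial in $n$ under exponential subgroup growth, so it does not ``dominate'' your fibre estimate, and the product $n^{O(\log\log n)}$ is not polynomial. The clean fix -- and what the paper does -- is to bound the number of couplings $\chi\colon G\to\Out(S)^k\rtimes\Sym(k)$ lifting a fixed $\chi_0\colon G\to\Sym(k)$ by $\lvert\Out(S)\rvert^{kd}$ using only that $G$ is $d$-generated (each generator has at most $\lvert\Out(S)\rvert^k$ preimages over its $\chi_0$-image); then $\lvert\Out(S)\rvert^{kd}\le\lvert S\rvert^{kd}=n^{d}$, which is polynomial. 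This bypasses entirely the $\Hom(H,\Out(S))$-plus-coset-data decomposition you flag as ``the main obstacle'' (and also your estimate $d(H)\le d(G)+k$, which is false in general -- Schreier gives $d(H)\le k(d(G)-1)+1$).

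A second, smaller issue: in the lower bound you hedge between dividing $a_m(G)$ by $[G:N_G(H)]$ and by $m!$. You must use the former: an index-$m$ subgroup has at most $[G:N_G(H)]\le[G:H]=m$ conjugates, giving $e_{60^m}^{\mathrm{min,n.a.}}(G)\ge a_m(G)/m$, from which polynomial extension growth forces $a_m(G)\le m\cdot 60^{\alpha m}$, an exponential bound. Division by $m!$ would actually sink the argument: ``super-exponential'' only means $a_m(G)>c^m$ infinitely often for every $c$, which does not imply $a_m(G)>m!\cdot 60^{\alpha m}$ infinitely often (consider $a_m\approx m^{m/2}$), and no ``reindexing'' repairs this. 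With these two corrections your argument becomes the paper's proof.
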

In the proof we will freely use Lemma~\ref{lem:coupling} without mention.
\begin{proof}
Let $d = d(G)$ denote the minimal number of generators of $G$.
We will denote the set of conjugacy classes of open subgroups (of index $k$) of $G$ by $\mathrm{Conj}(G)$ (resp.~by $\mathrm{Conj}_k(G)$).

Assume that $G$ has polynomial minimal non-abelian extension growth and choose any finite non-abelian simple group $S$ of order $s = |S|$.
For every open subgroup $H \leq_o G$ we construct the semidirect product $E_H = S^{G/H} \rtimes G$, where $G$ permutes the $|G/H|$ copies of $S$ as the cosets of $G/H$ by left multiplication.
Clearly, $1 \to S^{G/H} \to E_H \to G \to 1$ is a split extension and it is minimal since $G$ acts transitively on $G/H$.
Two such extensions $E_{H_1}$ and $E_{H_2}$ are isomorphic (as extenstions of $G$) exactly if $H_1$ and $H_2$ are conjugate.
Hence we obtain the inequality
\begin{equation*}
     e_{s^k}^{\mathrm{min,n.a.}}(G) \geq |\mathrm{Conj}_k(G)| \geq \frac{a_k(G)}{k}
\end{equation*}
and we conclude that $G$ has at most exponential subgroup growth.

Conversely, assume that $G$ has at most exponential subgroup growth. So there is a constant $\lambda >0$ such that $a_k(G) \leq 2^{\lambda k}$ for all $k \in \NN$.
Let $S$ denote a non-abelian finite simple group of order $s$.

We need some notation.
Let $\mathrm{Ex}_k(S,G)$ denote the set of isomorphism classes of minimal extensions of $G$ by $S^k$. We put $\mathrm{Ex}(S,G) = \bigcup_{k\in \NN} \mathrm{Ex}_k(S,G)$. By Lemma~\ref{lem:automorphismsSk}, $\Out(S^k) \cong \Out(S)^k \rtimes \Sym(k)$ with $\Sym(k)$ permuting the $k$ copies of $\mathrm{Out}(S)$.

For $k\in \NN$ we define a map $t_{S,k}\colon \mathrm{Ex}_k(S,G) \to \mathrm{Conj}_k(G)$ as follows.
Let $\mathcal{E}$ be a minimal non-abelian extension of $G$ by $S^k$ and let $\chi\colon G \to \Out(S)^k \rtimes \Sym(k)$ be the associated coupling.
Via the projection onto $\Sym(k)$ we obtain a permutation representation of $G$ on $k$ elements (the $k$ minimal normal subgroups of $S^k$).
Since the extension is minimal, the permutation is transitive. We define $t_{S,k}(\mathcal{E})$ to be the conjugacy class of the stabilizer of a point in this permutation representation. If we begin with an isomorphic extension, then the couplings are conjugate and hence we get the same permutation representation after relabelling -- in particular, the stabilizers are the same.
Using the semidirect product groups constructed above, it follows that the map $t_{S,k}$ is surjective. For an open subgroup $H\leq_o G$, we will denote by $[H]= \{H^g \vert g\in G\}$ the conjugacy class of $H$ in $G$. 

\medskip

\emph{Claim}: For every open subgroup $H \leq_o G$ of index $k$ the fibre $t_{S,k}^{-1}([H])$ has at most $|\Out(S)|^{kd}$ elements.

\medskip

Indeed, the subgroup $H$ yields a permutation representation of $G$ on $k = [G:H]$ elements and hence (enumerating the cosets of $H$) a homomorphism
$\chi_0 \colon G \to \Sym(k)$. Observe that we obtain a conjugate homomorphism, if we enumerate the cosets of $H$ differently. There are at most $|\Out(S)|^{kd}$ many couplings $\chi\colon G \to \Out(S)^k \rtimes \Sym(k)$ which extend $\chi_0$. We deduce that there are at most $|\Out(S)|^{kd}$ conjugacy classes of couplings with stabilizers in $[H]$ and the claim follows (here we use Lemma~\ref{lem:coupling}).

To conclude, we need two facts from the classification of the finite simple groups: 1) for every order there are at most two distinct finite simple groups (see \cite{ArtinOrdersofClassicalGroups}) and 2) if $S$ is a finite non-abelian simple group, then $|\Out(S)| \leq 30|\Out(S)| \leq |S|$ (see \cite[Lemma~2.2]{Quick}).
For every $n\geq 2$ we obtain
\begin{align*}
 e_{n}^{\mathrm{min,n.a.}}(G) &= \sum_{\substack{ s, k \in \NN \\ s^k = n }} \sum_{\substack{S \: \text{simple}\\ |S| = s}} |\mathrm{Ex}_k(S,G)| \\
                              &\leq \sum_{\substack{ s, k  \\ s^k = n }} \sum_{\substack{S \: \text{simple}\\ |S| = s}} |\mathrm{Conj}_k(G)| \cdot |\Out(S)|^{kd}\\
                              &\leq 2 \sum_{\substack{ s, k  \\ s^k = n }} a_k(G) \cdot n^d 
                              \leq 2 n^{d+1+\lambda}. \qedhere
\end{align*}
\end{proof}

\section{Minimal abelian extensions and representation growth over finite fields}\label{sec:UBERG1}

Let $G$ denote a finitely generated profinite group.
In this section we study the minimal abelian extensions of $G$.
We will see that this is related to the asymptotic behaviour of the number of irreducible continuous representations of $G$ over finite fields.

\subsection{Definitions and notation}

Let $\FF$ be a field. We consider the class of finite dimensional representations of $G$ over $\FF$ with open kernel. Recall that a representation $(\rho,V)$ of $G$ over $\FF$ is \emph{absolutely irreducible} if it is irreducible as a representation of $G$ over the algebraic closure $\overline{\FF}$ of the field $\FF$. The set of irreducible
(resp.\ absolutely irreducible) representations of $G$ over $\FF$ will be denoted by $\Irr(G,\FF)$ (resp.~$\Irrabs(G,\FF)$). 
Let $n\geq 1$ be an integer, then $\Irr_n(G,\FF) \subseteq \Irr(G,\FF)$ (resp.~$\Irrabs_n(G,\FF)$) denotes the subset of $n$-dimensional representations.
Whenever these sets are finite -- for instance, if $\FF$ is a finite field -- we write
\begin{align*}
    r_n(G,\FF) = \left| \Irr_n(G,\FF) \right|\\
    \rabs_n(G,\FF) = \left| \Irrabs_n(G,\FF) \right|
\end{align*}
for the number of (absolutely) irreducible $n$-dimensional representations.
We also define $R_n(G,\FF) = \sum_{j=1}^n r_n(G,\FF)$.

Let $p$ be a prime. We say that the profinite group $G$ has \emph{at most exponential mod-$p$ representation growth},
if there is a constant $e > 0$ such that $r_n(G,\FF_p) \leq p^{en}$.
We are interested in the case where $e$ can be chosen uniformly for all primes.

\begin{defn}
   A finitely generated profinite group is said the have \emph{uniformly bounded exponential representation growth} (UBERG) 
   if there is a positive number $e > 0$
   such that 
   \begin{equation*}
      r_n(G,\FF_p) \leq p^{en}
   \end{equation*}
   for all primes $p$ and all positive integers $n$.
\end{defn}

Equivalently, a profinite group $G$ has UBERG exactly if there is a positive number $e'>0$ such that $R_n(G,\FF_p) \leq p^{e'n}$ for all $p$ and all $n$.

\subsection{Minimal abelian extensions and UBERG}

Remember that we denoted by $e^\mathrm{min}_n(G)$ the number of minimal extension of the profinite group $G$ with a kernel of order $n$. 
Let $p$ be a prime number. Every minimal extension of $G$ of degree $p^k$ is abelian. Conversely, it follows from Lemma \ref{lem:minimalExtensionsBasic} that every minimal abelian extension of $G$ has prime power degree.
 
\begin{prop}\label{prop:represdimpk}
 Let $p$ be a prime, $G$ a finitely generated profinite group and $k$ an integer. Then 
 \[
   r_k(G,\FF_p) \leq e^\mathrm{min}_{p^k}(G) \leq \sum_{(\rho,V_\rho)\in \Irr_k(G,\FF_p)} \norm{H^2(G,V_\rho)}.
 \]
 If $G$ is finitely presentable with $r$ relations, then $e^\mathrm{min}_{p^k}(G) \leq p^{rk}r_k(G,\FF_p)$.
 \end{prop}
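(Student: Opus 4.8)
The plan is to recognise minimal abelian extensions of degree $p^k$ cohomologically and then count. By the paragraph preceding the proposition every minimal extension of degree $p^k$ is abelian, and by Lemma~\ref{lem:minimalExtensionsBasic} its kernel, with the conjugation action of $G$, is an irreducible $k$-dimensional $\FF_p$-module. Conversely, if $(\rho,V_\rho)\in\Irr_k(G,\FF_p)$ and $1\to V_\rho\to E\to G\to 1$ is \emph{any} extension with this module structure on $V_\rho$, then $E$ is a minimal extension: a nontrivial normal subgroup of $E$ contained in $V_\rho$ is $G$-stable, hence all of $V_\rho$ by irreducibility. Thus $e^\mathrm{min}_{p^k}(G)$ is the number of isomorphism classes of extensions of $G$ by the irreducible $k$-dimensional $\FF_pG$-modules. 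I would also record the elementary fact that an isomorphism of extensions $f\colon E_1\to E_2$ (with $\pi_2 f=\pi_1$) restricts on kernels to a $G$-equivariant isomorphism — choosing lifts of elements of $G$ shows $f$ intertwines the conjugation actions — so that isomorphic extensions have isomorphic kernel modules.

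For the first inequality I would map $(\rho,V_\rho)$ to the isomorphism class of the split extension $V_\rho\rtimes G$. By the last observation this is an injection of $\Irr_k(G,\FF_p)$ into the set of minimal extensions of degree $p^k$, so $r_k(G,\FF_p)\le e^\mathrm{min}_{p^k}(G)$. For the middle inequality, for a fixed module $V_\rho$ the extensions of $G$ by $V_\rho$ are classified up to equivalence by $H^2(G,V_\rho)$, and equivalent extensions are in particular isomorphic; hence the obvious map $\bigsqcup_{(\rho,V_\rho)\in\Irr_k(G,\FF_p)} H^2(G,V_\rho)\to\{\text{minimal extensions of degree }p^k\}$ is surjective, which gives $e^\mathrm{min}_{p^k}(G)\le\sum_{(\rho,V_\rho)\in\Irr_k(G,\FF_p)}\norm{H^2(G,V_\rho)}$. (All quantities are read in $[0,\infty]$; note $r_k(G,\FF_p)$ is finite because $\GL_k(\FF_p)$ is.)

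For the final assertion, fix a presentation $1\to R\to F_d\to G\to 1$ in which $R$ is generated as a normal subgroup by elements $y_1,\dots,y_r$. As $V_\rho$ is a trivial $R$-module and $F_d$ is free profinite (hence of cohomological dimension at most $1$, so $H^2(F_d,V_\rho)=0$), the five-term exact sequence of Theorem~\ref{thm:5termsequence} applied to $R\lhd F_d$ produces a surjection $H^1(R,V_\rho)^{G}\twoheadrightarrow H^2(G,V_\rho)$. Now $H^1(R,V_\rho)^G$ consists of the continuous homomorphisms $\varphi\colon R\to V_\rho$ satisfying $\varphi(fyf^{-1})=\pi(f)\cdot\varphi(y)$ for all $f\in F_d$ and $y\in R$; since the $F_d$-conjugates of $y_1,\dots,y_r$ topologically generate $R$, such a $\varphi$ is determined by the tuple $(\varphi(y_1),\dots,\varphi(y_r))\in V_\rho^r$. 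Hence $\norm{H^2(G,V_\rho)}\le\norm{H^1(R,V_\rho)^G}\le\norm{V_\rho}^r=p^{rk}$, and substituting into the middle inequality yields $e^\mathrm{min}_{p^k}(G)\le p^{rk}r_k(G,\FF_p)$.

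The genuine care needed is bookkeeping rather than analysis: $e^\mathrm{min}_{p^k}$ counts \emph{isomorphism} classes, not equivalence classes, of extensions, so the lower bound requires that this equivalence relation is not so coarse as to merge non-isomorphic kernel modules, while the upper bound uses that it is at least as coarse as equivalence. The profinite points — continuity of cocycles and homomorphisms, the fact that the normal generators of $R$ topologically generate it, and the vanishing of $H^2$ of a free profinite group — are routine once isolated.
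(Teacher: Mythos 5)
Your proof is correct and follows essentially the same route as the paper's: the split extensions $V_\rho\rtimes G$ give the lower bound, the classification of equivalence classes of extensions by $H^2(G,V_\rho)$ gives the middle bound, and the five-term exact sequence applied to a presentation with $r$ relations gives the final estimate. (You are in fact slightly more careful than the printed argument, which writes $\dim_{\FF_p}(V_\rho)^r$ where $\norm{V_\rho}^r$ is meant.)
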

 \begin{proof}
  By Lemma~\ref{lem:minimalExtensionsBasic}, any minimal extension of $G$ by a group of order $p^k$ is of the form $1\to V_\rho \to E \to G \to 1$ for some $(\rho,V_\rho)\in \mathrm{Irr}(G,\mathbb{F}_p)$ with $\dim(V_\rho) =k$.
  
   The first inequality follows from the observation that for every irreducible representation $(\rho, V_\rho)$ of $G$ over $\FF_p$, the
   semidirect product $V_\rho \rtimes G$ is a minimal extension of $G$ by $V_\rho$. The extensions are not isomorphic for distinct irreducible representations.
   
   Moreover, fixing a representation $(\rho,V_\rho)$ there are exactly $\norm{H^2(G,V_\rho)}$ different \emph{equivalence classes} of extensions of $G$ by $V_\rho$.
   This yields the second inequality.
   
    For the last part, let $1\to R \to F \to G \to 1$ be a presentation of $G$ with $r$ relations. By Theorem~\ref{thm:5termsequence} applied to the chosen presentation we obtain
 \[
   \mathrm{Hom}(R,V_\rho)^G \to H^2(G,V_\rho) \to H^2(F,V_\rho) =0.  
 \]
 Here $\mathrm{Hom}(R,V_\rho)^G$ denotes the vector space of $G$-equivariant homomorphisms from $R$ to $V_\rho$, where $G$ acts by conjugation on $R$.
Hence $\norm{H^2(G,V_\rho)} \le \vert \mathrm{Hom}(R,V_\rho)^G\vert \le \dim_{\mathbb{F}_p}(V_\rho)^r = p^{kr}$.
\end{proof}

%
%
%
%

\subsection{Subgroup growth and UBERG}
We will show that a profinite group with UBERG has at most exponential subgroup growth.

\begin{lem}\label{lem:irreducibleFaithful}
 Let $L$ be a finite group and let $(\rho, V)$ be a finite dimensional faithful representation of $L$ over some finite field $\FF_q$ of characteristic $p$.
 Suppose that $L$ has a unique minimal normal subgroup $M$ which is not a $p$-group.
 Then $L$ has a faithful irreducible $\FF_q$-representation of dimension at most $\dim_{\FF_q}(V)$.
\end{lem}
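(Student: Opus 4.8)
The plan is to pass to the composition factors of $V$ as an $\FF_q[L]$-module and show that $M$, hence (by uniqueness) $L$, acts faithfully on one of them. Fix a composition series $0 = V_0 \subset V_1 \subset \cdots \subset V_m = V$ of $V$ as an $\FF_q[L]$-module and put $W_i = V_i/V_{i-1}$; these are irreducible $\FF_q[L]$-modules with $\sum_i \dim_{\FF_q}(W_i) = \dim_{\FF_q}(V)$. The claim to establish is that $M$ acts faithfully on at least one $W_i$.

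First I would observe that for each $i$ the subgroup $N_i := \ker\bigl(M \to \GL(W_i)\bigr)$ is \emph{normal in $L$}: it is normal in $M$ for free, and since $V_{i-1} \subseteq V_i$ are $L$-submodules and $M \trianglelefteq L$, for $g\in L$ and $m\in N_i$ the element $gmg^{-1}$ lies in $M$ and acts on $W_i$ as $\bar g\,\bar m\,\bar g^{-1} = \bar g\,\bar g^{-1} = \mathrm{id}$, so $gmg^{-1}\in N_i$. As $N_i \leq M$ and $M$ is a minimal normal subgroup of $L$, we get $N_i = 1$ or $N_i = M$ for every $i$. If $N_i = M$ for all $i$, then $M$ acts trivially on every composition factor of $V$, so in a basis adapted to the flag every element of $M$ is upper unitriangular; hence $M$ embeds into the group of upper unitriangular matrices of $\GL_{\dim V}(\FF_q)$, which is a $p$-group since $p = \mathrm{char}(\FF_q)$. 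This contradicts the hypothesis that $M$ is not a $p$-group. Therefore $N_i = 1$ for some $i$, i.e.\ $M$ acts faithfully on $W_i$.

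It remains to upgrade ``faithful on $M$'' to ``faithful on $L$''. Since $M$ is the \emph{unique} minimal normal subgroup of $L$ and $L$ is finite, every nontrivial normal subgroup of $L$ contains $M$; equivalently, a normal subgroup $K \trianglelefteq L$ is trivial if and only if $K \cap M = 1$. Applying this with $K = \ker\bigl(L \to \GL(W_i)\bigr)$, we have $K \cap M = \ker\bigl(M \to \GL(W_i)\bigr) = N_i = 1$, so $K = 1$. Thus $W_i$ is a faithful irreducible $\FF_q[L]$-module, and $\dim_{\FF_q}(W_i) \leq \sum_j \dim_{\FF_q}(W_j) = \dim_{\FF_q}(V)$, as required.

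I do not anticipate a serious obstacle: the only points needing care are the $L$-normality of $N_i$ (which rests on the composition factors being $L$-equivariant subquotients) and the standard fact that a group of unipotent matrices over a field of characteristic $p$ is a $p$-group; everything else is a routine use of the minimality and uniqueness of $M$.
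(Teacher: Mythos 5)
Your proof is correct and follows essentially the same route as the paper's: both arguments show that $M$ cannot act trivially on every composition factor of $V$ (since $\rho(M)$ would then be unipotent, hence a $p$-group) and then invoke the uniqueness of the minimal normal subgroup to conclude that $L$ acts faithfully on such a factor. The paper merely packages the first step in the language of the Grothendieck group $G_0(\FF_q[M])$, while you argue directly with the composition series; the content is identical.
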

\begin{proof}
 Consider the restriction $V_{|M}$ of $V$ to $M$. 
 Let $[V_{|M}] \in G_0(\FF_q[M])$ denote the image in the Grothendieck group of finite dimensional $\FF_q[M]$-modules. Recall that $G_0(\FF_q[M])$ is a free
 abelian group generated by the simple $\FF_q[M]$-modules.
 The class $[V_{|M}]$ is not a multiple of the trivial representation. Indeed, assume this is the case, then
 there is a basis of $V$ in which $\rho(M)$ is contained in the upper triangular matrices. This is impossible since $M$ is not a $p$-group and $\rho$ is faithful.
 
 Consider the restriction homomorphism between the Grothendieck groups
 $\res_M\colon G_0(\FF_q[L]) \to  G_0(\FF_q[M])$. As $\res_M([V]) = [V_{|M}]$ is not a multiple of the trivial class,
 a composition series of $V$ as $\FF_q[L]$-module contains some irreducible factor $(\phi,W)$ on which $M$ acts non-trivially.
 By assumption $M$ is the unique minimal normal subgroup, thus $(\phi,W)$ is a faithful irreducible representation of $L$.
\end{proof}

Using this Lemma, we are able to slightly generalize Corollary 12.2 in \cite{JaikinPyber2011}.
\begin{prop}\label{prop:UBERGimpliesESG}
 Let $G$ be a finitely generated profinite group and suppose there is some prime number $p$
 such that $G$ has at most exponential representation growth over $\FF_p$.
 Then $G$ has at most exponential subgroup growth.
\end{prop}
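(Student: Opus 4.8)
The plan is to bound the subgroup growth $a_n(G)$ by controlling the number of primitive permutation actions of finite quotients of $G$ and, for those, by passing to a faithful irreducible representation whose dimension is logarithmic in the degree. First I would recall the standard reduction: $a_n(G) \le n \cdot m_n(G)$ where $m_n(G)$ counts index-$n$ maximal subgroups (so it suffices to bound maximal subgroup growth), and every maximal subgroup $H \le_o G$ of index $n$ gives a primitive permutation action of the finite quotient $L := G/\mathrm{Core}_G(H)$ on $n$ points. By the O'Nan--Scott framework, such a primitive group $L$ has a unique minimal normal subgroup $M \cong S^t$ (a socle), and the number of such $L$ together with a point stabilizer, up to the relevant equivalence, is what we must count. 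The case $M$ a $p'$-group (i.e. $M$ not a $p$-group) is where we will use the UBERG hypothesis over $\FF_p$; the case where $M$ is an elementary abelian $p$-group (affine type) must be handled separately.

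For the main (non-affine, or more precisely non-$p$-group socle) case, the idea is: a primitive group $L$ of degree $n$ with socle not a $p$-group has a \emph{faithful} permutation representation of degree $n$, hence a faithful $\FF_p$-representation of dimension at most $n$ (the permutation module), and the quotient $L$ has a unique minimal normal subgroup $M$ which is not a $p$-group, so Lemma~\ref{lem:irreducibleFaithful} applies and yields a faithful irreducible $\FF_p$-representation of $L$ of dimension $\le n$. Actually one wants a logarithmic bound on the dimension; here I would invoke the classical fact (as used in \cite{JaikinPyber2011}) that a primitive permutation group of degree $n$ with non-abelian, or more generally non-$p$-group, socle has order at most $n^{c \log n}$ and in fact admits a faithful irreducible $\FF_p$-representation of dimension $O(\log n)$ — combining Lemma~\ref{lem:irreducibleFaithful} with bounds on minimal faithful representation dimension of almost simple and product-type groups. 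Given such a representation of dimension $k = O(\log n)$, the hypothesis $r_k(G,\FF_p) \le p^{ek}$ bounds the number of choices of the representation, and then the number of primitive quotients of degree $n$ realizing a fixed faithful irreducible $L \hookrightarrow \GL_k(\FF_p)$ together with a point stabilizer is polynomially (or at worst subexponentially) bounded in $n$. Summing over $n$ gives $m_n(G) \le 2^{cn}$ for a suitable constant, hence at most exponential subgroup growth.

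For the affine case — maximal subgroups $H$ with $\mathrm{Core}_G(H)$ such that $G/\mathrm{Core}_G(H) = V \rtimes L_0$ with $V$ elementary abelian of order $n = p^k$ and $L_0 \le \GL_k(\FF_p)$ irreducible — the count of such maximal subgroups of index $p^k$ is controlled by the number of pairs consisting of an irreducible $\FF_p L_0$-module structure on $V$ and a complement, which reduces again to counting $k$-dimensional $\FF_p$-representations of $G$; the UBERG bound $r_k(G,\FF_p) \le p^{ek} = n^e$ makes this contribution polynomial in $n$. I would note this affine contribution is essentially the ``minimal abelian extension'' count already studied in Proposition~\ref{prop:represdimpk}.

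The hard part will be the uniform logarithmic dimension bound for faithful (irreducible) representations in the non-affine case: one must check that across all O'Nan--Scott types — almost simple, diagonal, product action, twisted wreath — a primitive group of degree $n$ whose socle is not a $p$-group has a faithful $\FF_p$-representation of dimension $O(\log n)$, with the implied constant absolute. This rests on the classification of finite simple groups (minimal faithful representation degrees over $\FF_p$ grow at most like a power of $\log|S|$ for $S$ simple, relative to the degree of the natural primitive action) and on bookkeeping for the product and diagonal constructions; Lemma~\ref{lem:irreducibleFaithful} reduces the problem from ``some faithful representation'' to ``some faithful \emph{irreducible} one'' of no larger dimension, which is exactly the technical input that makes the reduction to $r_k(G,\FF_p)$ go through. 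Everything else — the reduction $a_n \le n\, m_n$, splitting into primitive quotients via cores, and the final summation over $n$ — is routine once the dimension bound is in hand.
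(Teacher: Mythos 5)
Your proposal correctly identifies Lemma~\ref{lem:irreducibleFaithful} as the technical input and correctly locates where the non-$p$-group socle hypothesis enters, but the surrounding strategy has two genuine gaps that would sink it.

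First, the opening reduction ``$a_n(G)\le n\cdot m_n(G)$, so it suffices to bound maximal subgroup growth'' is false. Every open subgroup of index $n$ lies in \emph{some} maximal subgroup, but a single maximal subgroup can contain enormously many subgroups of a given index, so no bound of this shape holds; indeed polynomial maximal subgroup growth (PFG) does not imply polynomial, or even any prescribed, subgroup growth. Counting all index-$n$ subgroups cannot be reduced to counting maximal ones, so your whole scheme of enumerating primitive quotients does not control $a_n(G)$.

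Second, the ``hard part'' you defer --- that a primitive group of degree $n$ with non-$p$-group socle has a faithful $\FF_p$-representation of dimension $O(\log n)$ --- is simply not true: $\Alt(n)$ in its natural degree-$n$ action already requires dimension roughly $n-2$ over any field, and product-action examples behave no better. The correct and available bound is \emph{linear} in the degree (the permutation module), which is what the paper uses. The paper's proof avoids both problems by arguing contrapositively via the deep Theorem~10.2 of \cite{JaikinPyber2011}: super-exponential subgroup growth forces, for every $c>0$, a finite group $L$ with unique minimal normal subgroup $\Alt(b)^s$ such that $G$ has more than $c^{bs}$ distinct quotients isomorphic to $L$. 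Setting $n=bs$, the permutation module plus Lemma~\ref{lem:irreducibleFaithful} gives each such quotient a faithful irreducible $\FF_p$-representation of dimension at most $n$ (linear suffices here), and since distinct kernels yield inequivalent representations one gets $R_n(G,\FF_p)>c^n$ for every $c$, contradicting at most exponential representation growth. Without that counting theorem (or an equivalent), your direct enumeration cannot be completed.
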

\begin{proof}
 The argument is essentially the one in \cite[Cor.\ 12.2]{JaikinPyber2011}.
 Suppose $G$ has super-exponential subgroup growth.
 By Theorem 10.2 in  \cite{JaikinPyber2011}, for every  $c > 0$ there is finite group $L$ (depending on $c$) with a unique minimal normal subgroup $M \cong \Alt(b)^s$ for some
 $b \geq 5$, $s \geq 1$, such that $G$ has $k > c^{bs}$ distinct continuous quotients isomorphic to $L$.
 The group $L$ has a transitive permutation representation on $n = bs$ elements and thus has a faithful $\FF_p$-representation of dimension $bs$. By Lemma \ref{lem:irreducibleFaithful} we conclude that
 $L$ has a faithful irreducible $\FF_p$-representation of dimension at most $n = bs$.
 Since representations with distinct kernels are inequivalent, it follows that $R_n(G,\FF_p) \geq k > c^n$.
 As $c$ was arbitrary, the claim follows. 
\end{proof}

The next corollary is an immediate consequence of the previous proposition.

\begin{cor}
 A profinite group with UBERG has at most exponential subgroup growth.
\end{cor}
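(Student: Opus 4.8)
The plan is to observe that this is nothing more than a specialisation of Proposition~\ref{prop:UBERGimpliesESG}. By definition, a profinite group with UBERG is finitely generated, so the standing hypothesis of that proposition is automatically met. Moreover, the UBERG condition provides a constant $e>0$ with $r_n(G,\FF_p)\le p^{en}$ for \emph{all} primes $p$ and all $n$; in particular it holds for whichever single prime $p$ we care to fix (say $p=2$). Thus $G$ has at most exponential representation growth over that $\FF_p$, which is exactly the hypothesis of Proposition~\ref{prop:UBERGimpliesESG}.

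Concretely, I would argue as follows. Let $G$ be a profinite group with UBERG and choose a prime $p$. Then by definition there is $e>0$ with $r_n(G,\FF_p)\le p^{en}$ for every $n\in\NN$, so $G$ has at most exponential representation growth over $\FF_p$. Applying Proposition~\ref{prop:UBERGimpliesESG} yields that $G$ has at most exponential subgroup growth, which is the claim.

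There is no genuine obstacle at this level: all of the substance sits inside Proposition~\ref{prop:UBERGimpliesESG}, which in turn relies on Lemma~\ref{lem:irreducibleFaithful} together with the structural theorem of Jaikin-Zapirain and Pyber on groups of super-exponential subgroup growth. The only point worth spelling out — and the reason a one-line proof suffices — is that the uniformity over all primes in the definition of UBERG is far stronger than what is needed here; a bound over a single prime already forces exponential subgroup growth.
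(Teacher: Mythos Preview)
Your argument is correct and matches the paper's own treatment: the corollary is recorded there as an immediate consequence of Proposition~\ref{prop:UBERGimpliesESG}, and your only addition is to spell out explicitly that UBERG (defined for finitely generated groups) supplies the single-prime exponential bound that proposition requires.
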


\subsection{PFR and UBERG}
We are now in the position to prove the equivalence of \eqref{thmA:PFR} and \eqref{thmA:UBERG} Theorem \ref{thm:A}.

\begin{thm}\label{thm:PFRisUBERG}
 Let $G$ be a finitely presented profinite group.
 Then G is positively finitely related (PFR) if and only if G has uniformly bounded exponential representation growth (UBERG).
\end{thm}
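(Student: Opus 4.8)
The strategy is to combine Theorem~\ref{thm:minimalExtensionGrowth} (which already gives PFR $\Leftrightarrow$ polynomial minimal extension growth) with Theorem~\ref{thm:non-abelianExtensionGrowth}, Proposition~\ref{prop:represdimpk} and Proposition~\ref{prop:UBERGimpliesESG}. The point is that the minimal extensions of $G$ split into abelian ones (controlled by $\FF_p$-representation growth) and non-abelian ones (controlled by subgroup growth), so we must show that for a finitely presented $G$, polynomial control on \emph{both} families is equivalent to UBERG alone.

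\textbf{Step 1: UBERG $\Rightarrow$ PFR.}
Assume $G$ has UBERG with constant $e$. First, by Proposition~\ref{prop:UBERGimpliesESG} (applied with a single prime), $G$ has at most exponential subgroup growth, hence by Theorem~\ref{thm:non-abelianExtensionGrowth} the minimal non-abelian extension growth is polynomial: $e^{\mathrm{min,n.a.}}_n(G)\le n^{\beta}$ for some $\beta$. Next we bound the abelian part. Since $G$ is finitely presented, fix a presentation with $r$ relations. By Proposition~\ref{prop:represdimpk}, for every prime $p$ and every $k$ we have $e^{\mathrm{min}}_{p^k}(G)\le p^{rk} r_k(G,\FF_p)\le p^{rk}p^{ek}=p^{(r+e)k}=(p^k)^{r+e}$. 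Every minimal abelian extension has prime-power degree, so summing over the (at most two, by the divisor structure) ways to write $n=p^k$ as a prime power gives that the total minimal abelian extension growth is bounded by $2n^{r+e}$. Adding the abelian and non-abelian contributions, $e^{\mathrm{min}}_n(G)\le 2n^{r+e}+n^{\beta}$, which is polynomial; by Theorem~\ref{thm:minimalExtensionGrowth}, $G$ is PFR.

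\textbf{Step 2: PFR $\Rightarrow$ UBERG.}
Conversely, assume $G$ is PFR. By Theorem~\ref{thm:minimalExtensionGrowth}, $G$ has polynomial minimal extension growth, say $e^{\mathrm{min}}_n(G)\le n^{\alpha}$. In particular, restricting to abelian extensions of prime-power degree $n=p^k$, the first inequality of Proposition~\ref{prop:represdimpk} gives
\[
  r_k(G,\FF_p)\le e^{\mathrm{min}}_{p^k}(G)\le (p^k)^{\alpha}=p^{\alpha k}
\]
for \emph{every} prime $p$ and every $k$. This is precisely UBERG with $e=\alpha$.

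\textbf{Main obstacle.}
The forward implications are essentially bookkeeping, so the delicate point is Step~1, and specifically the passage from ``$G$ is finitely presented'' to a uniform bound on $\norm{H^2(G,V_\rho)}$ across all primes $p$ simultaneously. This is exactly where the ``$p^{rk}$'' factor in Proposition~\ref{prop:represdimpk} is used: finite presentability with a \emph{fixed} number $r$ of relations bounds $\norm{H^2(G,V_\rho)}$ by $p^{rk}$ uniformly in $p$, via the five-term exact sequence. Without finite presentability this control fails, which is why the hypothesis appears in the theorem. A secondary (but routine) point is making sure the non-abelian extensions contribute only polynomially — this is the content of Theorem~\ref{thm:non-abelianExtensionGrowth} together with the fact that UBERG over one prime already forces at most exponential subgroup growth, so no genuinely new argument is needed there.
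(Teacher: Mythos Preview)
Your proof is correct and follows essentially the same route as the paper's: both directions go through Theorem~\ref{thm:minimalExtensionGrowth}, with the abelian and non-abelian minimal extensions handled via Proposition~\ref{prop:represdimpk} and Theorem~\ref{thm:non-abelianExtensionGrowth} (the latter fed by Proposition~\ref{prop:UBERGimpliesESG}) exactly as you do. One trivial slip: a natural number $n$ has at most \emph{one} representation as $p^k$ with $p$ prime, not two, but this is harmless for the bound.
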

\begin{proof}
 Assume that $G$ is PFR. By Theorem \ref{thm:minimalExtensionGrowth} the group $G$ has polynomial minimal extension growth. The first part of Proposition \ref{prop:represdimpk} implies
 that $G$ has UBERG.
 
 Conversely, assume that $G$ has UBERG. By Proposition \ref{prop:UBERGimpliesESG} the group $G$ has at most exponential subgroup growth.
 This implies, by Theorem \ref{thm:non-abelianExtensionGrowth}, that $G$ has polynomial minimal non-abelian extension growth.
 By Theorem \ref{thm:minimalExtensionGrowth} it suffices to show that $G$ has polynomial minimal abelian extension growth.
 However, by assumption $G$ is finitely presented and has UBERG, thus Proposition \ref{prop:represdimpk} implies the claim.
\end{proof}

\begin{cor}\label{cor:PRFquotients}
 A finitely presented quotient of a PFR group is again PFR.
\end{cor}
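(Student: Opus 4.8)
The plan is to reduce the statement entirely to the representation-theoretic characterisation in Theorem~\ref{thm:PFRisUBERG}, together with the elementary observation that UBERG is inherited by continuous quotients. So let $G$ be PFR and let $q\colon G \twoheadrightarrow Q$ be a continuous epimorphism onto a finitely presented profinite group $Q$. First I would record that, since $G$ is PFR, it is in particular finitely generated and finitely presented (a PFR presentation is finitely normally related by definition). Hence Theorem~\ref{thm:PFRisUBERG} applies to $G$ and shows that $G$ has UBERG: there is a constant $e>0$ with $r_n(G,\FF_p)\le p^{en}$ for all primes $p$ and all $n\in\NN$.

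Next I would check that UBERG descends to $Q$. Given an $n$-dimensional irreducible continuous $\FF_p$-representation $\rho$ of $Q$, its inflation $\rho\circ q$ is an $n$-dimensional continuous $\FF_p$-representation of $G$; it is irreducible, since any $G$-invariant subspace is automatically $Q$-invariant because the $G$-action factors through $q$. Moreover inflation along the fixed surjection $q$ is injective on isomorphism classes: if $\rho_1\circ q \cong \rho_2\circ q$ as $G$-representations, the intertwining isomorphism is $Q$-equivariant for the same reason. Thus inflation yields an injection $\Irr_n(Q,\FF_p)\hookrightarrow \Irr_n(G,\FF_p)$, and therefore $r_n(Q,\FF_p)\le r_n(G,\FF_p)\le p^{en}$ for all $p$ and all $n$, i.e.\ $Q$ has UBERG with the same constant~$e$.

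Finally, $Q$ is finitely generated, being a continuous quotient of the finitely generated group $G$, and it is finitely presented by hypothesis; so Theorem~\ref{thm:PFRisUBERG} applies once more, now in the converse direction, and concludes that $Q$ is PFR.

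I do not expect any real obstacle: the corollary is a formal consequence of Theorem~\ref{thm:PFRisUBERG}. The only subtlety worth stating explicitly is that the finite-presentation hypothesis is used at both ends of the argument — to pass from PFR to UBERG for $G$, and from UBERG back to PFR for $Q$ — which is precisely why the result is phrased for \emph{finitely presented} quotients rather than arbitrary continuous quotients.
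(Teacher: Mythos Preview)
Your proposal is correct and follows exactly the approach the paper indicates: the paper's proof is the single sentence ``The corollary follows from the observation that UBERG is preserved under quotients,'' and you have simply unpacked that observation via inflation along $q$ and applied Theorem~\ref{thm:PFRisUBERG} in both directions. Your remark about where the finite-presentation hypothesis enters is accurate and worth keeping.
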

The corollary follows from the observation that UBERG is preserved under quotients.

\section{Uniformly bounded exponential representation growth}\label{sec:UBERG2}
In this section we initiate a careful investigation of the uniformly bounded exponential representation growth.

\subsection{The completed group algebra}
Unifomly bounded exponential representation growth can be characterised in terms of the completed group algebra.
Let $G$ be a profinite group. The completed group algebra $\cGrAlg{G}$ of $G$ is the inverse limit
\begin{equation*}
   \varprojlim_{n, N }{(\ZZ/n\ZZ)[G/N]}
\end{equation*}
running over all positive integers $n \in \NN$ and all open normal subgroups $N \triangleleft_o G$.
The completed group algebra is a profinite topological ring. All ideals considered here are left ideals.
For $n \in \NN$ we denote by $m_n^{\triangleleft}(\cGrAlg{G})$ the number of maximal open ideals of index $n$ in $\cGrAlg{G}$.
As for groups, we say that $\cGrAlg{G}$ has \emph{polynomial maximal ideal growth}, if there is a constant $\gamma > 0$ such that
$m_n^{\triangleleft}(\cGrAlg{G}) \leq n^\gamma$ for all $n \in \NN$.

A profinite topological $\cGrAlg{G}$-module $M$ is said to be \emph{positively finitely generated}, if 
the probability $P_{\cGrAlg{G}}(k,M)$ that $k$ random elements generate $M$ as topological $\cGrAlg{G}$-module is positive for some $k$.
Here we use the normalized Haar measure on $M$ to define the probability.
\begin{prop}\label{prop:UBERGandPMIG}
 Let $G$ be a finitely generated profinite group. The following statements are equivalent.
 \begin{enumerate}
  \item\label{eq:UBERG1} $G$ has UBERG.
  \item\label{eq:PMIG} $\cGrAlg{G}$ has polynomial maximal ideal growth.
  \item\label{eq:PFGmodule} $\cGrAlg{G}$ is positively finitely generated as module over itself.
  \item\label{eq:PFGallModules} Every finitely generated topological $\cGrAlg{G}$-module is positively finitely generated.
 \end{enumerate}
\end{prop}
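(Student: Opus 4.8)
The plan is to prove the cycle of implications $\eqref{eq:UBERG1}\Rightarrow\eqref{eq:PFGallModules}\Rightarrow\eqref{eq:PFGmodule}\Rightarrow\eqref{eq:PMIG}\Rightarrow\eqref{eq:UBERG1}$, with the bridge between representation growth over finite fields and the ring $\cGrAlg{G}$ being the main conceptual point. The implication $\eqref{eq:PFGallModules}\Rightarrow\eqref{eq:PFGmodule}$ is trivial since $\cGrAlg{G}$ is finitely (in fact cyclically) generated over itself. For $\eqref{eq:PFGmodule}\Rightarrow\eqref{eq:PMIG}$ I would run the Borel--Cantelli argument already used in Proposition~\ref{prop:PFRiffPMNSG}: a $k$-tuple in $\cGrAlg{G}$ fails to generate the module exactly when all $k$ entries lie in some maximal open left ideal $\mathfrak{m}$, so $1 - P_{\cGrAlg{G}}(k,\cGrAlg{G}) \le \sum_{n\ge 2} m_n^{\triangleleft}(\cGrAlg{G})/n^{k}$ (using that a maximal open left ideal of index $n$ has measure $1/n$); if this probability is positive for some $k$ then the series converges for that $k$, which forces the polynomial bound $m_n^{\triangleleft}(\cGrAlg{G}) \le n^{c}$ for a suitable $c$ (more precisely, super-polynomial growth would make the tail diverge for every $k$, using pairwise independence of the events, which holds because the quotient of $\cGrAlg{G}$ by a maximal open left ideal is a simple module and distinct such ideals are comaximal).

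The heart of the matter is the equivalence $\eqref{eq:PMIG}\Leftrightarrow\eqref{eq:UBERG1}$, together with $\eqref{eq:UBERG1}\Rightarrow\eqref{eq:PFGallModules}$. The key dictionary is: maximal open left ideals $\mathfrak{m}$ of $\cGrAlg{G}$ correspond to simple topological $\cGrAlg{G}$-modules $\cGrAlg{G}/\mathfrak{m}$; such a simple module is annihilated by some $n\ZZ$, hence is a module over $\FF_p[G/N]$ for some prime $p$ and open $N$, i.e.\ a continuous irreducible $\FF_p$-representation of $G$; and the index $\norm{\cGrAlg{G}:\mathfrak{m}}$ equals $|\mathrm{End}(S)|\cdot\dim_{\FF_p} S = p^{f\cdot k}$ where $S$ is $k$-dimensional absolutely irreducible over the field $\FF_{p^f}=\mathrm{End}(S)$. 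One then counts: the number of maximal open left ideals of index $p^{m}$ is, up to controlled factors, the number of irreducible $\FF_p$-representations of $G$ of $\FF_p$-dimension $m$ — more precisely each such module $S$ with $\mathrm{End}(S)=\FF_{p^f}$ arises from $\dim_{\FF_p}S/f$ many left ideals inside it (the primitive idempotents), all of the same index. Thus a polynomial bound $m_n^{\triangleleft}(\cGrAlg{G}) \le n^{\gamma}$ translates into $R_m(G,\FF_p) \le p^{\gamma m}$ summed over the relevant $p$, which is exactly UBERG (in the equivalent $R_n$-formulation noted after the definition); conversely UBERG gives the polynomial ideal bound since $\sum_{p^m = n}$ runs over at most $\log_2 n$ terms.

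For $\eqref{eq:UBERG1}\Rightarrow\eqref{eq:PFGallModules}$ I would reduce a finitely generated module $M$ to the counting already done: if $M$ needs $d$ generators, then $M$ fails to be generated by $k+d$ random elements with probability at most (essentially) $\sum_{\mathfrak{n}} \norm{M:\mathfrak{n}}^{-k}$ where $\mathfrak{n}$ ranges over maximal open submodules of $M$; each simple quotient $M/\mathfrak{n}$ is again a continuous irreducible $\FF_p$-representation of $G$, and it appears as a quotient of $M$ with multiplicity at most $d\cdot(\text{its length in }\cGrAlg{G}^d)$, so the number of maximal open submodules of index $n$ is bounded polynomially in $n$ by the UBERG hypothesis, and the same Borel--Cantelli estimate gives positive probability for $k$ large. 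The main obstacle I anticipate is bookkeeping the Galois descent between irreducible and absolutely irreducible representations — keeping straight the relation $\norm{\cGrAlg{G}:\mathfrak{m}} = |\mathrm{End}(S)|\dim_{\FF_p}S$ and the number of left ideals of a given index sitting inside a fixed simple module — and making sure the polynomial/exponential bounds survive summation over the at most logarithmically many primes and field-degrees involved; none of this is deep, but it requires care to state cleanly.
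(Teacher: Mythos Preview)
Your plan is essentially the paper's: the Borel--Cantelli arguments for \eqref{eq:PMIG}$\Leftrightarrow$\eqref{eq:PFGmodule} and the dictionary between maximal open left ideals of $\cGrAlg{G}$ and continuous irreducible $\FF_p$-representations for \eqref{eq:UBERG1}$\Leftrightarrow$\eqref{eq:PMIG} are exactly what the paper does, and your \eqref{eq:UBERG1}$\Rightarrow$\eqref{eq:PFGallModules} via counting maximal submodules of $M$ is a more explicit version of the paper's one-line remark that positive finite generation passes to direct sums and quotients.

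Where you diverge is in over-engineering the dictionary. The paper bypasses all Galois descent: given a simple module $S$ of $\FF_p$-dimension $n$, a maximal left ideal $\mathfrak m$ has $\cGrAlg{G}/\mathfrak m\cong S$ exactly when $\mathfrak m=\Ann(v)$ for some nonzero $v\in S$, which immediately yields
\[
  r_n(G,\FF_p)\;\le\;m_{p^n}^{\triangleleft}(\cGrAlg{G})\;\le\;p^n\,r_n(G,\FF_p).
\]
This is all that is needed; your detour through absolutely irreducible representations and endomorphism fields is unnecessary and in fact contains slips. Your index formula $\norm{\cGrAlg{G}:\mathfrak m}=|\mathrm{End}(S)|\cdot\dim_{\FF_p}S$ is not right as written (the index is simply $|S|=p^{\dim_{\FF_p}S}$), and the number of maximal left ideals with quotient isomorphic to $S$ is $(|S|-1)/(|\mathrm{End}(S)|-1)$, not $\dim_{\FF_p}S/f$. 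None of this breaks the argument, since only polynomial bounds are at stake, but the annihilator observation makes the whole ``main obstacle'' you anticipate disappear.
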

\begin{proof}
Let $I \leq \cGrAlg{G}$ be an open maximal left ideal. The quotient $\cGrAlg{G}/I$ is a finite irreducible
$G$-module over some prime field $\FF_p$. In particular, the index of $I$ is a prime power. Conversely, let $V$ be a finite irreducible $G$-module over $\FF_p$. The finite abelian group $V$ 
is a module under the completed group algebra $\cGrAlg{G}$. 
Observe that $V$ is isomorphic to $\cGrAlg{G}/I$ if and only if $I$ is the annihilator of some non-zero vector in $V$.
The equivalence of \eqref{eq:UBERG1} and \eqref{eq:PMIG} follows immediately from the inequalities
\begin{equation*}
      r_n(G,\FF_p) \leq m_{p^n}^{\triangleleft}(\cGrAlg{G}) \leq p^n r_n(G,\FF_p)
\end{equation*}
for all primes $p$ and $n \in \NN$.

Suppose now that $\cGrAlg{G}$ has polynomial maximal ideal growth and $m_n^{\triangleleft}(\cGrAlg{G}) \leq n^\gamma$.
Let $k \geq 2+\gamma$ be an integer.
We obtain
\begin{equation*}
    P_{\cGrAlg{G}}(k,\cGrAlg{G}) \geq 1 - \sum_{n = 2}^\infty \frac{m_n^{\triangleleft}(\cGrAlg{G})}{n^k} \geq 1-  \sum_{n = 2}^\infty \frac{1}{n^2} > \frac{1}{3}.
\end{equation*}

Finally, assume that $\cGrAlg{G}$ is positively finitely generated. Observe that two open maximal ideals $I,J \leq \cGrAlg{G}$ with 
non-isomorphic quotients define independent events in the Haar measure.
Now, as in the proof of Proposition \ref{prop:PFRiffPMNSG} the Borel-Cantelli Lemma~\ref{lem:borelcantelli} implies that $G$ has UBERG.

The equivalence of \eqref{eq:PFGmodule} and \eqref{eq:PFGallModules} is immediate, since positive finite generation is preserved under
direct sums and quotients.
\end{proof}

\subsection{UBERG is virtually invariant}
We recall that positive finite generation (PFG) is a property of the commensurability class. This is a deep result of 
Jaikin-Zapirain and Pyber \cite{JaikinPyber2011}.
The purpose of this section is to show that also UBERG is a property of the commensurability class of a profinite group.

\begin{thm}\label{thm:openUBERG}
 Let $G$ be a profinite group and $H \leq_o G$ any open subgroup.
 Then $G$ has UBERG if and only if $H$ has UBERG.
\end{thm}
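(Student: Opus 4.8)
The statement is that UBERG is invariant under passing between a profinite group $G$ and an open subgroup $H \le_o G$. The plan is to prove the two directions separately, using the characterisation of UBERG via polynomial maximal ideal growth of the completed group algebra (Proposition~\ref{prop:UBERGandPMIG}) together with standard counting of irreducible modules. Since $H$ is open in $G$, it has finite index, say $[G:H] = m$, and we may as well assume $H$ is normal (after replacing $H$ by the normal core, which changes the index only by a bounded factor; if one direction is known for normal open subgroups, the general case follows because UBERG for $H$ forces UBERG for the smaller normal subgroup and conversely). So throughout I would assume $N \trianglelefteq_o G$ with $[G:N] = m$.

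\medskip

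\textbf{Direction 1: $G$ has UBERG $\Rightarrow$ $N$ has UBERG.} The tool here is induction of representations. Given an irreducible $\FF_p[N]$-module $V$ of dimension $k$, form the induced module $\mathrm{Ind}_N^G V$, of dimension $mk$; every irreducible $\FF_p[N]$-module appears as a submodule (equivalently, a composition factor after semisimplification — but over $\FF_p$ one must be slightly careful and work with composition factors) of the restriction to $N$ of some irreducible $\FF_p[G]$-module $W$ of dimension at most $mk$ occurring in $\mathrm{Ind}_N^G V$. Conversely, the restriction $W_{|N}$ of an irreducible $\FF_p[G]$-module of dimension $\le mk$ has at most $mk$ composition factors. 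The key point is Clifford theory: the number of irreducible $\FF_p[N]$-modules of dimension $k$ that can arise as a composition factor of $W_{|N}$ for a fixed $W$ is bounded by the index $m$ (the $G$-orbit of such a factor has size at most $m$, and all composition factors of $W_{|N}$ lie in a single orbit). This yields an inequality of the shape
\begin{equation*}
  r_k(N,\FF_p) \le m \cdot \sum_{j=1}^{mk} r_j(G,\FF_p) = m\cdot R_{mk}(G,\FF_p),
\end{equation*}
and if $R_j(G,\FF_p) \le p^{e'j}$ for all $p,j$, then $r_k(N,\FF_p) \le m\, p^{e'mk} \le p^{(e'm + 1)k}$ for $k$ large, giving UBERG for $N$.

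\medskip

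\textbf{Direction 2: $N$ has UBERG $\Rightarrow$ $G$ has UBERG.} Here I would go the other way: restriction from $G$ to $N$. An irreducible $\FF_p[G]$-module $W$ of dimension $k$ restricts to an $\FF_p[N]$-module of dimension $k$, all of whose composition factors are irreducible $\FF_p[N]$-modules forming a single $G$-orbit, of dimension $k/t$ where $t$ is the number of factors; so $W_{|N}$ has a composition factor of dimension at most $k$ (indeed exactly $k/t \le k$). Thus $W$ is determined by: (i) the choice of an irreducible $\FF_p[N]$-module $V$ appearing in $W_{|N}$ (at most $\sum_{j\le k} r_j(N,\FF_p) = R_k(N,\FF_p)$ choices), together with (ii) the finite combinatorial data reconstructing $W$ from $V$ — namely the stabiliser $G_V$ (one of at most $2^m$ subgroups between $N$ and $G$), and an irreducible projective representation of $G_V/N$ with the appropriate cocycle, of which there are at most a constant $C(m)$ depending only on $m$. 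This gives
\begin{equation*}
  r_k(G,\FF_p) \le C(m)\cdot 2^m \cdot R_k(N,\FF_p),
\end{equation*}
and UBERG for $N$ (in the $R$-form, $R_k(N,\FF_p)\le p^{e'k}$) immediately yields UBERG for $G$.

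\medskip

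\textbf{Main obstacle.} The delicate point is the Clifford-theoretic bookkeeping over the non-algebraically-closed field $\FF_p$: over $\FF_p$ a restriction need not be semisimple, so one must consistently argue with composition factors rather than direct summands, and the "reconstruction of $W$ from $V$" step (projective representations of $G_V/N$) requires knowing that the relevant twisted group algebras over $\FF_p$ have only boundedly many irreducibles — which holds because $G_V/N$ is a fixed finite group of order dividing $m$, so there are only finitely many twisted group algebras involved and each has finitely many simple modules, with a bound depending only on $m$. An alternative, cleaner route that avoids some of this is to phrase everything in terms of the completed group algebras $\cGrAlg{G}$ and $\cGrAlg{N}$: since $\cGrAlg{G}$ is a finitely generated free $\cGrAlg{N}$-module of rank $m$, comparing maximal left ideals on the two sides (extension of scalars $\cGrAlg{G}\otimes_{\cGrAlg{N}}(-)$ and restriction) gives polynomial maximal ideal growth on one side from the other, up to the factor $n \mapsto n^m$ in the index; I would likely present the proof in this module-theoretic language, invoking Proposition~\ref{prop:UBERGandPMIG} at the start and end, as it keeps the orbit-counting uniform and avoids characteristic-$p$ subtleties about semisimplicity.
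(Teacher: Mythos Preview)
Your proposal is correct in outline, but the paper takes a different and considerably shorter route. Rather than counting representations directly via Clifford theory, the paper uses characterisation \eqref{eq:PFGmodule} of Proposition~\ref{prop:UBERGandPMIG}: $G$ has UBERG if and only if $\cGrAlg{G}$ is positively finitely generated as a module over itself. Since $\cGrAlg{G}$ is free of rank $r=[G:H]$ as a (left) $\cGrAlg{H}$-module, the direction ``$H$ UBERG $\Rightarrow$ $G$ UBERG'' is immediate: every finitely generated $\cGrAlg{H}$-module is PFG, in particular $\cGrAlg{G}$ is, and a fortiori it is PFG over itself. For the converse the paper uses a transversal trick: if $(x_1,\dots,x_k)$ generates $\cGrAlg{G}$ over $\cGrAlg{G}$ and $t_1,\dots,t_r$ is a transversal, then $(t_ix_j)_{i,j}$ generates $\cGrAlg{G}$ over $\cGrAlg{H}$; this gives $P_{\cGrAlg{H}}(kr,\cGrAlg{G})>0$, hence $\cGrAlg{H}$ (a direct summand of $\cGrAlg{G}$ as $\cGrAlg{H}$-module) is PFG over itself. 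This is precisely the ``alternative, cleaner route'' you allude to in your final paragraph, and it is what the paper actually does; note in particular that it requires no reduction to the normal case.

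Your Clifford-theoretic argument is sound, though Direction~2 can be simplified: once you know (by Clifford) that an irreducible $\FF_p[G]$-module $W$ of dimension $k$ has some irreducible $\FF_p[N]$-summand $V$ of dimension $\le k$, Frobenius reciprocity makes $W$ a composition factor of $\mathrm{Ind}_N^G V$, which has dimension $\le mk$ and hence at most $m$ composition factors of dimension $k$. This yields $r_k(G,\FF_p)\le m\cdot R_k(N,\FF_p)$ without any appeal to projective representations or twisted group algebras, and sidesteps the non-algebraically-closed subtleties you flag as the main obstacle.
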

\begin{cor}
A profinite group is PFR if and only if some (and then any) open subgroup is.
\end{cor}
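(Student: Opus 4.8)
The corollary is meant to be the PFR-analogue of Theorem \ref{thm:B}\eqref{point:PFRopen}, and the strategy is to reduce it entirely to the already-established equivalence PFR $\Leftrightarrow$ UBERG for finitely presented groups (Theorem \ref{thm:PFRisUBERG}) together with the virtual invariance of UBERG (Theorem \ref{thm:openUBERG}). First I would observe that both PFR and the hypothesis ``finitely presented'' are stated for finitely generated profinite groups, so one starts with $G$ finitely generated and $H \leq_o G$ open (hence also finitely generated). The one genuine preliminary point is that being finitely presented is itself virtually invariant: if $G$ is finitely presented then so is every open subgroup $H$, and conversely if some open subgroup $H$ is finitely presented then so is $G$. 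This is a standard fact about profinite presentations (one can cite \cite{lubotzky:prof_pres}; the argument runs through the fact that $H^2(G, M)$ is finite for all finite modules $M$ iff the same holds for $H$, via the restriction–corestriction maps, or directly by a Reidemeister–Schreier-type argument for profinite groups). Granting this, ``$G$ finitely presented'' and ``$H$ finitely presented'' are equivalent, so we may assume both hold.

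Now the chain of equivalences is immediate: $G$ is PFR $\iff$ $G$ is finitely presented and has UBERG (by Theorem \ref{thm:PFRisUBERG}, noting a PFR group is automatically finitely presented) $\iff$ $H$ is finitely presented and has UBERG (by virtual invariance of finite presentability and Theorem \ref{thm:openUBERG}) $\iff$ $H$ is PFR (again by Theorem \ref{thm:PFRisUBERG}). The phrase ``some (and then any)'' in the statement is handled by the same argument: if some open subgroup $H_0$ is PFR, then $G$ has UBERG and is finitely presented, hence $G$ is PFR, hence by the forward direction every open subgroup is PFR.

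\begin{proof}
First note that PFR and UBERG are only defined for finitely generated profinite groups; since $G$ is finitely generated and $H \leq_o G$ has finite index, $H$ is finitely generated as well. We claim that $G$ is finitely presented if and only if $H$ is finitely presented. Indeed, for a finitely generated profinite group being finitely presented is equivalent to the finiteness of $H^2(-,M)$ for every finite discrete module $M$ (see \cite{lubotzky:prof_pres}), and this cohomological condition passes between $G$ and its open subgroups via the restriction and corestriction maps, whose composite is multiplication by the index $[G:H]$ on each cohomology group of a module, combined with the standard fact that the relevant cohomology groups are finite abelian groups of bounded exponent. Hence ``$G$ finitely presented'' and ``$H$ finitely presented'' are equivalent, and in what follows we may assume both hold (for if neither holds, neither $G$ nor $H$ is PFR, since a PFR group is finitely presented, and the corollary holds vacuously).

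Suppose $G$ is PFR. Then $G$ is finitely presented and, by Theorem~\ref{thm:PFRisUBERG}, $G$ has UBERG. By Theorem~\ref{thm:openUBERG}, $H$ has UBERG, and by the previous paragraph $H$ is finitely presented; applying Theorem~\ref{thm:PFRisUBERG} again, $H$ is PFR. Conversely, suppose some open subgroup $H_0 \leq_o G$ is PFR. Then $H_0$ is finitely presented and has UBERG by Theorem~\ref{thm:PFRisUBERG}; hence $G$ is finitely presented and, by Theorem~\ref{thm:openUBERG}, $G$ has UBERG, so $G$ is PFR by Theorem~\ref{thm:PFRisUBERG}. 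Applying the forward implication just proved, every open subgroup of $G$ is then PFR. This establishes the ``some (and then any)'' assertion.
\end{proof}
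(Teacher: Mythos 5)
Your proof is correct and follows exactly the paper's route: combine Theorem~\ref{thm:PFRisUBERG} (PFR $\Leftrightarrow$ finitely presented $+$ UBERG) with the virtual invariance of UBERG (Theorem~\ref{thm:openUBERG}) and of finite presentability. The paper states this in one sentence and takes the virtual invariance of finite presentability as a known fact; you merely supply the standard cohomological justification for that fact, which is a harmless elaboration rather than a different argument.
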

The corollary follows immediately from Theorem \ref{thm:PFRisUBERG} and the fact that a profinite group is finitely presented exactly if some open subgroup is.

\begin{proof}[Proof of Theorem \ref{thm:openUBERG}]
Suppose that $H$ has UBERG. The completed group algebra $\cGrAlg{G}$ is a finitely generated $\cGrAlg{H}$-module.
By Proposition \ref{prop:UBERGandPMIG} it is positively finitely generated as $\cGrAlg{H}$-module. A fortiori it is positively finitely generated as 
$\cGrAlg{G}$-module.

Conversely, suppose that $G$ has UBERG. By Proposition \ref{prop:UBERGandPMIG} the ring $\cGrAlg{G}$ is positively finitely generated as module over itself.
As $\cGrAlg{H}$-module $\cGrAlg{G}$ is free of rank $r = [G:H]$, it suffices to show that $\cGrAlg{G}$ is positively finitely generated as $\cGrAlg{H}$-module. In fact, this would imply that every finitely generated $\cGrAlg{H}$-module is positively finitely generated.

Let $k \geq 1$ be an integer such that the probability $P_{\cGrAlg{G}}(k,\cGrAlg{G})$ is positive.
Fix a transversal $T = \{t_1,\dots,t_r\}$ of $H$ in $G$. If $(x_1,\dots,x_k)$ is a generating $k$-tuple of $\cGrAlg{G}$ as $\cGrAlg{G}$-module, 
then the $kr$-tuple $(t_1x_1, \dots, t_1x_k, t_2x_1, \dots, t_rx_k)$ is generating for $\cGrAlg{G}$ as $\cGrAlg{H}$-module.
From this observation it follows readily that $P_{\cGrAlg{H}}(k r,\cGrAlg{G}) > 0$.
\end{proof}

\subsection{UBERG is preserved under direct products}
In this section we establish the following results.
\begin{thm}\label{thm:directProducts}
Let $G_1, G_2$ be profinite groups with UBERG, then the direct product $G_1 \times G_2$ has UBERG.
\end{thm}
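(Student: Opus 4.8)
The plan is to use the characterisation of UBERG via representation growth over finite fields (the definition itself) together with the basic structure of irreducible modules of a direct product. Let $p$ be a prime. The key elementary fact is that every irreducible $\FF_p[G_1\times G_2]$-module $V$ with open kernel appears inside $V_1 \otimes_{\FF_p} V_2$ for suitable irreducible modules $V_i$ of $G_i$ over a finite extension of $\FF_p$; more precisely, after base change to $\overline{\FF_p}$ every absolutely irreducible $\overline{\FF_p}[G_1\times G_2]$-module is of the form $V_1\otimes V_2$ with $V_i$ absolutely irreducible for $G_i$ (this is the standard tensor product theorem for modules over a product, valid over an algebraically closed field). So the first step is to reduce from $\Irr$ to $\Irrabs$: using the equivalent formulation of UBERG in terms of $R_n(G,\FF_p)\le p^{e'n}$ and the fact that each irreducible of dimension $n$ over $\FF_p$ splits over $\overline{\FF_p}$ into at most $n$ absolutely irreducible constituents (each of dimension dividing $n$), one sees that UBERG for $G$ is equivalent to a bound of the form $\rabs_n(G,\FF_{p^a}) \le C\, p^{e''an}$ uniformly in $p$, $a$ and $n$ — I would record this as a small lemma (or verify it inline) since it is the technical bridge.

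With that in hand, the second step is the counting estimate. Fix $p$ and $a$, and count absolutely irreducible $\overline{\FF_p}$-representations of $G_1\times G_2$ of dimension $n$ realised over $\FF_{p^a}$: each is $V_1\otimes V_2$ with $\dim V_1 \cdot \dim V_2 = n$, and each $V_i$ is defined over $\FF_{p^{a}}$ (or a controlled extension — I would absorb the field-of-definition bookkeeping into the lemma above). Hence
\[
  \rabs_n(G_1\times G_2, \FF_{p^a}) \;\le\; \sum_{d_1 d_2 = n} \rabs_{d_1}(G_1,\FF_{p^a})\,\rabs_{d_2}(G_2,\FF_{p^a})
  \;\le\; \sum_{d_1 d_2 = n} C^2 p^{e''a d_1}\, p^{e''a d_2}.
\]
Since $d_1 + d_2 \le d_1 d_2 + 1 = n+1$ whenever $d_1,d_2\ge 1$, each summand is at most $C^2 p^{e''a(n+1)}$, and the number of summands is $\tau(n)\le n$. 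Therefore $\rabs_n(G_1\times G_2,\FF_{p^a}) \le C^2 n\, p^{e''a(n+1)} \le p^{e'''an}$ for a suitable constant $e'''$ and all $n$ large, which (running the bridge lemma backwards) gives UBERG for $G_1\times G_2$.

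I expect the main obstacle to be the field-of-definition bookkeeping in the passage between irreducible and absolutely irreducible representations: over a non-algebraically-closed finite field the tensor decomposition need not hold on the nose, an irreducible $\FF_p$-module of $G_1\times G_2$ may involve constituents defined only over an extension $\FF_{p^a}$ with $a\le n$, and conversely one must check that bounding $\rabs$ over all finite extensions uniformly is genuinely equivalent to UBERG as defined (which is stated over the prime fields $\FF_p$ only). The cleanest route is probably to go through the completed group algebra instead, via Proposition~\ref{prop:UBERGandPMIG}: a maximal open left ideal of $\cGrAlg{G_1\times G_2}$ corresponds to a simple module, its endomorphism ring is a finite field $\FF_{p^a}$ with $a$ at most its $\FF_p$-dimension, and then the tensor argument over $\overline{\FF_p}$ combined with Galois descent controls everything. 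Either way, once the reduction to absolutely irreducible representations over finite extensions is set up correctly, the divisor-sum estimate above closes the proof with room to spare. Alternatively, and perhaps most slickly, one can argue module-theoretically: $\cGrAlg{G_1\times G_2}$ is a quotient of the completed tensor product $\cGrAlg{G_1}\,\widehat{\otimes}\,\cGrAlg{G_2}$, and positive finite generation (condition \eqref{eq:PFGmodule} of Proposition~\ref{prop:UBERGandPMIG}) should propagate through such completed tensor products by a Fubini-type argument on the product Haar measure — I would attempt this first and fall back on the explicit counting only if the measure-theoretic independence is not transparent.
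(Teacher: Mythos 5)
Your main line of argument is essentially the paper's proof: the ``bridge lemma'' you describe is exactly the paper's Lemma~\ref{lem:absolutely} (proved via the Galois-orbit bijection of Lemma~\ref{lem:bijectionGaloisOrbits}, using that Schur indices are trivial over finite fields), and the divisor-sum estimate over the outer tensor product decomposition $\rabs_n(G_1\times G_2,k)=\sum_{n_1n_2=n}\rabs_{n_1}(G_1,k)\rabs_{n_2}(G_2,k)$ is precisely how the paper concludes. The field-of-definition worry you flag is resolved exactly as you suspect: one works with absolutely irreducible representations realised over each finite field $k$ itself, where the tensor decomposition does hold on the nose by Fein's theorems.
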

\begin{cor}
 PFR is preserved under direct products.
\end{cor}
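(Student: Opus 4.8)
The plan is to prove that the direct product $G_1 \times G_2$ has UBERG assuming both $G_1$ and $G_2$ do, and then invoke Theorem~\ref{thm:PFRisUBERG} together with the fact that a direct product of two finitely presented profinite groups is finitely presented to obtain the corollary. So the real content is Theorem~\ref{thm:directProducts}, and I would reduce everything to counting irreducible modular representations. Let $p$ be a prime and $n$ a positive integer, and let $(\rho, V)$ be an irreducible $\FF_p$-representation of $G = G_1 \times G_2$ of dimension $n$. The first step is to understand the structure of such $V$ via a Clifford-theory / tensor-decomposition argument: restricting to $G_1 \times 1$ and $1 \times G_2$, one finds that $V$ is a constituent of a tensor product, and more precisely (working over $\overline{\FF_p}$ first, then descending) every absolutely irreducible $\overline{\FF_p}$-representation of $G_1 \times G_2$ is of the form $V_1 \otimes V_2$ with $V_i$ absolutely irreducible for $G_i$, with $\dim V_1 \cdot \dim V_2 = \dim V$. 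The subtlety over the non-algebraically-closed field $\FF_p$ is handled by the standard bound relating $r_n$ and $\rabs_n$ and by noting that an irreducible $\FF_p$-representation becomes, over $\overline{\FF_p}$, a sum of Galois-conjugate absolutely irreducibles; this only costs a factor that is polynomial (indeed at most $n$, the number of conjugates) in $n$, which is harmless for an exponential bound.

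Granting the tensor decomposition, the counting step is essentially a convolution estimate. If $G_1$ has UBERG with constant $e_1$ and $G_2$ with constant $e_2$, then for a fixed $p$ and $n$,
\begin{equation*}
  \rabs_n(G_1 \times G_2, \FF_p) \;\leq\; \sum_{ab = n} \rabs_a(G_1, \overline{\FF_p})\,\rabs_b(G_2, \overline{\FF_p})
  \;\leq\; \sum_{ab = n} p^{e_1 a} p^{e_2 b} \;\leq\; d(n)\, p^{(e_1+e_2)n},
\end{equation*}
where $d(n)$ is the number of divisors of $n$ and I have used $a, b \leq n$ together with $ab = n \Rightarrow a + b \leq n + 1$ (or more crudely $a \le n$, $b \le n$, so $p^{e_1 a + e_2 b} \le p^{(e_1+e_2)n}$). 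Since $d(n) \leq n$, this gives $\rabs_n(G_1 \times G_2, \FF_p) \leq n\, p^{(e_1+e_2)n} \leq p^{(e_1 + e_2 + 1)n}$, and then $r_n \leq n \cdot \rabs_{\leq n}$-type bounds (using that an $\FF_p$-irreducible splits over $\overline{\FF_p}$ into at most $n$ absolutely irreducible constituents, each of dimension dividing $n$) convert this to the desired uniform exponential bound $r_n(G_1 \times G_2, \FF_p) \leq p^{e' n}$ with $e'$ independent of $p$. One should be slightly careful to use the $R_n$ formulation of UBERG noted after the definition, so that summing over all dimensions up to $n$ does not spoil the constant.

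The main obstacle I expect is not the counting but the representation-theoretic input over $\FF_p$: over an algebraically closed field the statement that $\mathrm{Irr}(G_1 \times G_2) = \mathrm{Irr}(G_1) \otimes \mathrm{Irr}(G_2)$ is classical, but over $\FF_p$ one must control how a single $\FF_p$-irreducible of $G_1 \times G_2$ relates to tensor products of $\FF_p$- (or $\FF_{p^m}$-) irreducibles of the factors, keeping the endomorphism fields and Galois descent under control. The clean way around this is to never argue directly over $\FF_p$: bound $r_n(G,\FF_p)$ by $\rabs_{\le n}(G,\overline{\FF_p})$ times a polynomial factor (an $\FF_p$-irreducible of dimension $n$ has at most $n$ absolutely irreducible $\overline{\FF_p}$-constituents, all Galois-conjugate, and they determine it), then work entirely over $\overline{\FF_p}$ where the tensor decomposition is exact, and finally bound $\rabs_a(G_i, \overline{\FF_p})$ by $r_{\le a}(G_i,\FF_p) \cdot a$ or so, using that every absolutely irreducible $\overline{\FF_p}$-representation of $G_i$ is a constituent of (the base change of) some $\FF_p$-irreducible of dimension at most its own. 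Assembling these comparisons with the convolution bound above yields the theorem; the corollary is then immediate from Theorem~\ref{thm:PFRisUBERG}, since the direct product of finitely presented profinite groups is finitely presented.
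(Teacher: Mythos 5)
Your overall strategy is the paper's: reduce the corollary to the statement that UBERG passes to direct products (Theorem~\ref{thm:directProducts}), prove that by passing to absolutely irreducible representations, using the tensor-product description of the absolutely irreducible representations of $G_1\times G_2$ and a divisor-convolution estimate, and then conclude via Theorem~\ref{thm:PFRisUBERG} together with the fact that a direct product of finitely presented profinite groups is finitely presented. That last deduction, and the convolution arithmetic, are fine.

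However, the descent step between $\FF_p$ and $\overline{\FF_p}$ is not merely a ``subtlety to be kept under control'': as written, it fails. The quantity $\rabs_a(G_i,\overline{\FF_p})$, understood as the number of irreducible $\overline{\FF_p}$-representations of dimension $a$ with no constraint on the field of definition, is in general infinite --- already $\widehat{\ZZ}$ has infinitely many one-dimensional representations over $\overline{\FF_p}$ --- so your convolution bound has an infinite right-hand side. The proposed comparison ``$\rabs_a(G_i,\overline{\FF_p})\leq a\cdot r_{\leq a}(G_i,\FF_p)$'' is false for the same reason: an absolutely irreducible representation of dimension $a$ with trace field $\FF_{p^m}$ descends to an $\FF_p$-irreducible of dimension $am\geq a$, and $m$ is unbounded, so these representations are invisible to $r_{\leq a}(G_i,\FF_p)$. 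The paper's fix is Lemma~\ref{lem:absolutely}: UBERG (a condition over prime fields) is equivalent to a uniform bound $\rabs_n(G,k)\leq |k|^{bn}$ ranging over \emph{all finite fields} $k$; the proof uses the Galois-orbit bijection of Lemma~\ref{lem:bijectionGaloisOrbits} and the fact that an absolutely irreducible representation realizable over $k$ has orbit length at most $[k:\FF_p]$ and descends to an $\FF_p$-irreducible of dimension at most $n[k:\FF_p]$. With that in hand, the tensor decomposition and the convolution are carried out over each finite field $k$ separately (noting that the tensor factors of an absolutely irreducible representation realizable over $k$ are themselves realizable over $k$), and one returns to $r_n(G_1\times G_2,\FF_p)$ via $r_n(G,\FF_p)\leq\sum_{d\mid n}\rabs_{n/d}(G,\FF_{p^d})$. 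So the missing ingredient is precisely the uniformity over all finite fields; once you isolate and prove that equivalence, the rest of your argument goes through exactly as in the paper.
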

\begin{rmk}
It is possible to extend this result to certain semidirect products under technical assumptions. 
\end{rmk}

For the proof we first reformulate the UBERG property in terms of absolutely irreducible representations. 
Let $G$ be a finitely generated profinite group, let $k$ be a finite field and let $\bar{k}$ denote the algebraic closure. 
The absolute Galois group
 $\mathrm{Gal}(\bar{k}/k)$ acts on the set of absolutely irreducible representations $\Irr(G,\bar{k})$ (from the right, say) and every representation has a finite orbit.
\begin{lem}\label{lem:bijectionGaloisOrbits}
Let $G$ be a profinite group and let $k$ be a finite field.
There is a bijection
\begin{equation*}
  \Phi \colon  \Irr(G,\bar{k})/\mathrm{Gal}(\bar{k}/k) \longrightarrow \Irr(G,k) 
\end{equation*}
such that 
$\dim_k \Phi( M \mathrm{Gal}(\bar{k}/k)) = | M  \mathrm{Gal}(\bar{k}/k)| \dim_{\bar{k}} M$
for all $M \in \Irr(G,\bar{k})$.
\end{lem}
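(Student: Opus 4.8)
The plan is to reduce to finite groups and then run classical Galois descent for representations, the one point of real substance being that finite fields have trivial Brauer group. Every representation with open kernel factors through a finite quotient $G/N$, and the sets $\Irr(-,k)$, $\Irr(-,\bar k)$, the $\mathrm{Gal}(\bar k/k)$-action on the latter, and all dimensions are compatible with inflation along surjections $G/N_1 \twoheadrightarrow G/N_2$. So it suffices to produce, for each finite group $H$, a bijection $\Irr(H,\bar k)/\mathrm{Gal}(\bar k/k) \to \Irr(H,k)$ obeying the displayed dimension identity, and then pass to the colimit over $N \triangleleft_o G$. For fixed $H$ I would instead build the inverse map: to an irreducible $k[H]$-module $V$ assign the set of irreducible constituents of $V_{\bar k} := V \otimes_k \bar k$, check this set is a single $\mathrm{Gal}(\bar k/k)$-orbit, and verify the resulting map is a bijection.

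First I would analyse $V_{\bar k}$. Since $k$ is perfect, $\bar k/k$ is separable, so $V_{\bar k}$ is a semisimple $\bar k[H]$-module. By Schur's lemma $D := \mathrm{End}_{k[H]}(V)$ is a finite-dimensional division algebra over the finite field $k$, hence by Wedderburn's little theorem a finite field extension $k'/k$; put $t = [k':k]$. As $V$ is finite-dimensional, $\mathrm{End}_{\bar k[H]}(V_{\bar k}) \cong D \otimes_k \bar k \cong k' \otimes_k \bar k \cong \bar k^{\,t}$, the last step because $k'/k$ is separable. A semisimple module with endomorphism ring $\bar k^{\,t}$ is necessarily a direct sum $V_{\bar k} \cong M_1 \oplus \cdots \oplus M_t$ of $t$ pairwise non-isomorphic irreducible $\bar k[H]$-modules; in particular $V_{\bar k}$ is multiplicity free.

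Next I would prove transitivity of the Galois action on $\{M_1,\dots,M_t\}$. The semilinear operators $\mathrm{id}_V\otimes\sigma$, $\sigma\in\mathrm{Gal}(\bar k/k)$, act on $V_{\bar k}$, and Galois descent for vector spaces (compatibly with the $H$-action) identifies the $\mathrm{Gal}(\bar k/k)$-stable $\bar k[H]$-submodules of $V_{\bar k}$ with the $k[H]$-submodules of $V$; as $V$ is irreducible, the only such submodules are $0$ and $V_{\bar k}$. Because $V_{\bar k}$ is multiplicity free, each $\sigma$ carries the submodule $M_i$ onto the unique submodule isomorphic to the conjugate $M_i^{\sigma}$; thus $\mathrm{Gal}(\bar k/k)$ permutes $\{M_1,\dots,M_t\}$, and the sum over any orbit is a stable submodule, forcing a single orbit. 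I then define $\Phi^{-1}(V)$ to be this orbit. Conjugate modules have equal $\bar k$-dimension, so $\dim_k V = \dim_{\bar k} V_{\bar k} = t\cdot\dim_{\bar k}M_1 = |\Phi^{-1}(V)|\cdot\dim_{\bar k}M_1$, which is the asserted formula.

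Finally, bijectivity. Injectivity: if $V,V'$ give orbits sharing a constituent then $\Hom_{\bar k[H]}(V_{\bar k},V'_{\bar k})\ne 0$, and since this equals $\Hom_{k[H]}(V,V')\otimes_k\bar k$ we get $\Hom_{k[H]}(V,V')\ne 0$, hence $V\cong V'$. Surjectivity: any $M\in\Irr(H,\bar k)$ is defined over a finite subfield, which we may enlarge to a finite extension $k''/k$, giving an irreducible $k''[H]$-module $M''$ with $M''\otimes_{k''}\bar k\cong M$; restricting scalars to $k$, the nonzero $k[H]$-module $M''$ has an irreducible submodule $V$, and $V_{\bar k}$ embeds into $M''\otimes_k\bar k\cong\bigoplus_{\sigma\in\mathrm{Gal}(k''/k)}M^{\sigma}$, which is semisimple with all constituents in the $\mathrm{Gal}(\bar k/k)$-orbit of $M$; so $\Phi^{-1}(V)$ is that orbit. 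Setting $\Phi=(\Phi^{-1})^{-1}$ finishes it. The main obstacle is the transitivity step: it requires marrying Galois descent for submodules of $V_{\bar k}$ with the multiplicity-freeness of $V_{\bar k}$, and the latter genuinely uses the two facts special to finite base fields — Wedderburn's little theorem (so $D$ is a field) and the perfectness of $k$ (so $D\otimes_k\bar k$ is reduced); the remaining ingredients are routine.
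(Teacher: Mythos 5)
Your proof is correct, and it constructs the same bijection as the paper (an irreducible $k[H]$-module corresponds to the Galois orbit of the constituents of its base change to $\bar{k}$), but the engine driving it is different. The paper gets the multiplicity-free decomposition $V_{\bar{k}} \cong \bigoplus_{\sigma \in \mathrm{Gal}(k(\vartheta)/k)} M^{\sigma}$ by citing Fein and Curtis--Reiner: the Schur index over a finite field is $1$, so each absolutely irreducible constituent is realizable over its trace field and the conjugates enumerate the summands; well-definedness of $\Phi$ then follows from linear independence of characters of simple modules. You instead derive the same decomposition from first principles: Schur plus Wedderburn's little theorem make $\mathrm{End}_{k[H]}(V)$ a finite field $k'$, separability gives $\mathrm{End}_{\bar{k}[H]}(V_{\bar{k}}) \cong k'\otimes_k\bar{k} \cong \bar{k}^{\,t}$ and hence multiplicity one, and Galois descent for submodules of $V_{\bar{k}}$ forces transitivity of the Galois action on the $t$ constituents. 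The mathematical content is the same (Wedderburn's little theorem is exactly what makes the Schur index trivial here), but your version is self-contained where the paper leans on references, and your explicit treatment of surjectivity (descending $M$ to a finite extension $k''$ and restricting scalars) fills in what the paper dismisses as ``an obvious inverse map.'' Both arguments correctly isolate the two features of finite fields that matter: perfectness (so base change stays semisimple and $k'\otimes_k\bar{k}$ is reduced) and commutativity of finite division rings.
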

\begin{proof}
Without loss of generality, we assume that $G$ is a finite group.
Let $(\rho,V)$ be an irreducible representation of $G$ over $k$. The base change representation on $V_{\bar{k}} = \bar{k} \otimes_k V$
is completely reducible (see \cite[(7.11)]{CurtisReiner1}). Fix any irreducible summand $(\vartheta,M)$ of $V_{\bar{k}}$ and let $k(\vartheta)$ denote the trace field of $(\vartheta, M)$, i.e.\ the field generated by the character values of $\vartheta$. 
The representation $\vartheta$ can be realized over the trace field $k(\vartheta)$ since the Schur index equals $1$ \cite[Thm.~1.1]{Fein1967}. Moreover,
by \cite[Thm.~1.4]{Fein1967} or \cite[(7.11)]{CurtisReiner1} we have
\begin{equation}\label{eq:decomposition1}
     V_{\bar{k}} = \bigoplus_{\sigma \in \mathrm{Gal}(k(\vartheta)/k)} M^\sigma.
\end{equation}
The map $\Phi$ is defined as follows:
Take an irreducible representation $(\vartheta, M)$ of $G$ over $\bar{k}$, choose 
an irreducible representation $(\rho, V)$ of $G$ over $k$ such that $M$ is a direct summand of $V_{\bar{k}}$ and define
$\Phi(M \mathrm{Gal}(\bar{k}/k))$ to be the isomorphism class of $(\rho,V)$. By \eqref{eq:decomposition1} (and the fact that characters of distinct simple modules are linearly independent \cite[(17.3)]{CurtisReiner1}) the representation $V$ is unique and independent of the chosen orbit representative $M$.
Since there is an obvious inverse map, $\Phi$ is bijective.
We observe that, given an irreducible representation $(\rho, V)$ of $G$ over $k$, then $\Phi^{-1}(V)$ is the orbit $\{M^\sigma \:|\: \sigma \in \mathrm{Gal}(\bar{k}/k)\}$ which has precisely $[k(\vartheta): k]$ elements. 
\end{proof}

\begin{lem}\label{lem:absolutely}
 A profinite group has UBERG if and only if there is a positive constant $b > 0$ such that
 \begin{equation*}
      \rabs_n(G, k) \leq |k|^{bn}
   \end{equation*}
   for every finite field $k$ and every positive integer $n$.
\end{lem}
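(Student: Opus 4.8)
The plan is to deduce Lemma~\ref{lem:absolutely} from the bijection $\Phi$ established in Lemma~\ref{lem:bijectionGaloisOrbits}, by carefully comparing how dimensions are measured on the two sides. The key point is that $\Phi$ relates an orbit $M\,\mathrm{Gal}(\bar k/k)$ of size $t$ consisting of $\bar k$-representations of dimension $m$ to a single $k$-representation of dimension $tm$, and that orbits of size $t$ are governed by the degree $[k(\vartheta):k]=t$ of a subextension of $\bar k/k$, so there are few of them. I would treat the two implications separately, but in both cases reduce to the observation that $|k|$ and $p$ (the characteristic) are comparable enough that ``exponential in the dimension, base $|k|$'' and ``exponential in the dimension, base $p$'' define the same class when the constant is allowed to absorb $\log_p|k|$, which is bounded independently of the particular field only if one is careful — actually it is $n\log_p|k|$ that appears, so the bookkeeping must be done at the level of $|k|^{bn}=p^{bn[k:\FF_p]}$.

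First I would prove the ``only if'' direction: assume $G$ has UBERG, so $r_n(G,\FF_p)\le p^{en}$ for all $p,n$. Fix a finite field $k$ of characteristic $p$, say $|k|=q=p^f$. Given an absolutely irreducible $\bar k$-representation $M$ of dimension $n$, its Galois orbit has some size $t\mid$ (something), and $\Phi$ sends it to a $k$-representation $V$ of dimension $tn$; distinct orbits go to distinct (isomorphism classes of) $k$-representations. An orbit of size $t$ determines, via the trace field, an intermediate field $k\subseteq k(\vartheta)\subseteq\bar k$ with $[k(\vartheta):k]=t$, hence $k(\vartheta)=\FF_{q^t}$ is unique; and $M$ is then an absolutely irreducible representation realizable over $\FF_{q^t}$, i.e.\ (after base change from $\FF_{q^t}$ to $\bar k$) it corresponds to an absolutely irreducible $\FF_{q^t}$-representation of dimension $n$. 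I would count: the number of orbits of size $t$ of $\bar k$-representations of dimension $n$ is at most $\rabs_n(G,\FF_{q^t})\le r_n(G,\FF_{q^t})\le (q^t)^{en}=q^{ent}$. Summing the contribution to $\rabs$ of dimension $\le N$: an orbit of size $t$ contributes an $N$-dimensional $k$-representation only when $tn=N$, so $\rabs_N(G,\bar k)\le\sum_{t\mid N} q^{eN}=q^{eN}\cdot d(N)\le q^{(e+1)N}$ for $N$ large (using $d(N)\le N\le q^N$), giving the desired bound with $b=e+1$.

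For the ``if'' direction, assume $\rabs_n(G,k)\le|k|^{bn}$ for all finite $k$ and $n$. Fix a prime $p$ and $n$; I want to bound $r_n(G,\FF_p)$. Each irreducible $(\rho,V)\in\Irr_n(G,\FF_p)$ has, by \eqref{eq:decomposition1}, $V_{\bar{\FF_p}}=\bigoplus_{\sigma\in\mathrm{Gal}(k(\vartheta)/\FF_p)}M^\sigma$ where $M$ is absolutely irreducible of dimension $n/t$ with $t=[k(\vartheta):\FF_p]$, and $\Phi$ identifies $V$ with the orbit of $M$; thus $r_n(G,\FF_p)$ is at most the number of absolutely irreducible $\bar{\FF_p}$-representations of dimension dividing $n$ which are realizable over some $\FF_{p^t}$ with $tn'=n$, which in turn is at most $\sum_{t\mid n}\rabs_{n/t}(G,\FF_{p^t})\le\sum_{t\mid n}(p^t)^{b(n/t)}=\sum_{t\mid n}p^{bn}\le n\cdot p^{bn}\le p^{(b+1)n}$. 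Hence $G$ has UBERG with constant $e=b+1$.

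I expect the main obstacle to be the dimension bookkeeping in Lemma~\ref{lem:bijectionGaloisOrbits}'s identity $\dim_k\Phi(M\,\mathrm{Gal})=|M\,\mathrm{Gal}|\dim_{\bar k}M$ combined with the base-change step: one must check that an absolutely irreducible $\bar k$-representation with trace field $\FF_{q^t}$ is genuinely counted by $\rabs_{n/t}(G,\FF_{q^t})$ (this uses the Schur index $1$ statement, \cite[Thm.~1.1]{Fein1967}, already invoked in the proof of Lemma~\ref{lem:bijectionGaloisOrbits}), and that as $k$ ranges over all finite fields the hypothesis is strong enough to feed all the auxiliary fields $\FF_{q^t}$ appearing. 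Everything else is the elementary estimate $\sum_{t\mid n}1\le n\le p^n$ used to absorb the divisor count into the exponential constant.
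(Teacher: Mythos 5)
Your ``if'' direction is correct and is essentially the paper's argument: decompose $V_{\bar{\FF}_p}$ into the Galois orbit of an absolutely irreducible constituent $M$ with trace field $\FF_{p^t}$, realize $M$ over $\FF_{p^t}$ using Schur index $1$, and conclude $r_n(G,\FF_p)\le\sum_{t\mid n}\rabs_{n/t}(G,\FF_{p^t})\le n\,p^{bn}\le p^{(b+1)n}$. In this direction the hypothesis really does cover all the auxiliary fields $\FF_{p^t}$, so the bookkeeping closes up.

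The ``only if'' direction, however, has a genuine gap. You descend from $\bar k$ only down to $k$, and bound the number of size-$t$ Galois orbits of $n$-dimensional absolutely irreducible $\bar k$-representations by $\rabs_n(G,\FF_{q^t})\le r_n(G,\FF_{q^t})\le (q^t)^{en}$. The last inequality is not available: UBERG, as defined in the paper, only asserts $r_n(G,\FF_p)\le p^{en}$ for \emph{prime} fields, whereas $\FF_{q^t}=\FF_{p^{ft}}$ is a proper extension whenever $ft>1$. A bound $r_n(G,K)\le |K|^{en}$ for arbitrary finite fields $K$ dominates $\rabs_n(G,K)$ and is therefore essentially the statement you are proving, so as written the step is circular; you flagged this worry yourself (``the hypothesis is strong enough to feed all the auxiliary fields''), but in this direction it is not. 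The repair is to descend all the way to the prime field, as the paper does: an $n$-dimensional absolutely irreducible representation over $k$ has trace field $\FF_{p^s}$ with $s\le f=[k:\FF_p]$, its $\mathrm{Gal}(\bar{\FF}_p/\FF_p)$-orbit has length $s$ and maps under $\Phi$ (taken over the base field $\FF_p$) to an irreducible $\FF_p$-representation of dimension $sn\le fn$, with fibres of size at most $f$; hence $\rabs_n(G,k)\le f\,R_{nf}(G,\FF_p)\le f\,p^{enf}\le |k|^{(e+1)n}$, using only UBERG over prime fields. A minor additional point: the quantity you denote $\rabs_N(G,\bar k)$ is infinite in general (already $\widehat{\ZZ}$ has infinitely many one-dimensional representations over $\bar{\FF}_p$); what your orbit sum actually controls is $r_N(G,k)$, which is indeed the right quantity to bound since it dominates $\rabs_N(G,k)$.
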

\begin{proof}
 Assume first, that $G$ has UBERG and $R_n(G,\FF_p) \leq p^{en}$ for all primes $p$ and every $n$.
 Let $k$ be a finite field of characteristic $p$.
 Every absolutely irreducible representation of $G$ which is defined over $k$ has a Galois orbit of length at most $[k:\FF_p]$.
 Using Lemma \ref{lem:bijectionGaloisOrbits} we conclude that
 \begin{equation*}
    \rabs_n(G,k) \leq [k : \FF_p] R_{n[k:\FF_p]}(G,\FF_p) \leq [k : \FF_p] p^{en[k:\FF_p]} \leq |k|^{1+en}.
 \end{equation*}
 
 Conversely, assume that $\rabs_n(G, k) \leq |k|^{bn}$ for every finite field $k$.
 By Lemma \ref{lem:bijectionGaloisOrbits} we obtain the inequality
 \begin{equation*}
    r_n(G, \FF_p) \leq \sum_{d | n} \rabs_{n/d}(G, \FF_{p^d})  \leq \sum_{d | n} p^{bn} \leq p^{(1+b)n}.
 \end{equation*}
\end{proof}

\begin{proof}[Proof of Theorem \ref{thm:directProducts}]
Assume $G_1$ and $G_2$ have UBERG.
By Lemma \ref{lem:absolutely} there are constants $b_1, b_2 > 0$ such that
$\rabs_n(G_i,k) \leq |k|^{b_i n}$ for every finite field $k$.

Let $k$ be any finite field.
For absolutely irreducible representations $\pi_i \in \Irrabs_n(G_i,k)$, the outer tensor product representation
$\pi_1 \otimes_k \pi_2$ is absolutely irreducible (see \cite[Thm.\ 2.3]{Fein1967}).
Moreover, every absolutely irreducible representation of $G_1\times G_2$ over $k$ is of this form
(see \cite[Thm.\ 2.7]{Fein1967}).
It follows that
\begin{equation*}
  \rabs_n(G_1\times G_2,k) = \sum_{n_1 n_2 = n} \rabs_{n_1}(G_1,k) \cdot \rabs_{n_2}(G_2,k) \leq n |k|^{(b_1 + b_2)n}
\end{equation*}
and we conclude that $G_1 \times G_2$ has UBERG.
\end{proof}

\subsection{A criterion for UBERG}
\begin{defn}
 We say that a finite group $K$ is \emph{involved} in a profinite group $G$, if 
 it is the continuous quotient of some open subgroup of $G$.
\end{defn}

\begin{thm}
 Let $G$ be a finitely generated profinite group which does not involve some finite group, then $G$ has UBERG.
\end{thm}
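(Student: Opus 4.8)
The plan is to deduce this from the characterisation already established, namely that UBERG is equivalent to polynomial maximal ideal growth of $\cGrAlg{G}$ and, via Lemma~\ref{lem:absolutely}, to a uniform exponential bound $\rabs_n(G,k) \le |k|^{bn}$ on absolutely irreducible representations over all finite fields $k$. So it suffices to produce such a bound under the hypothesis that some finite group $K$ is not involved in $G$. The key classical input should be the theorem of Borovik--Pyber--Shalev \cite{BorovikPyberShalev1996}: if $K$ is not involved in $G$ then $G$ is PFG, and in fact has polynomial maximal subgroup growth; more importantly for us, the structural restriction (no section isomorphic to $K$) severely limits which finite simple groups, and which actions, can occur as composition factors of finite quotients of $G$. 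I would isolate the consequence that there is a constant $C = C(K)$ such that every finite quotient $Q$ of $G$ satisfies a bound on the number of its irreducible modular representations of each dimension in terms of $C$ and the dimension alone.

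The main steps, in order, would be the following. First, reduce to bounding $\rabs_n(G,\FF_q)$ for a prime field (Lemma~\ref{lem:absolutely} reduces the general finite field case to the prime case up to a harmless factor). Second, given an absolutely irreducible $n$-dimensional representation $\rho$ of $G$ over $\FF_p$, its image $\rho(G)$ is a finite group with a faithful absolutely irreducible representation of dimension $n$; I want to bound the number of possible images and then, for each image, the number of ways it can arise. Third, invoke the structure theory for finite linear groups of bounded dimension together with the ``no $K$ involved'' hypothesis: a finite group with a faithful $n$-dimensional representation over $\FF_p$ has a normal subgroup of $p$-bounded... — more precisely one uses that, excluding some fixed $K$, the alternating and symmetric groups appearing as sections are bounded, hence (by a Babai--Cameron--P\'alfy / Borovik--Pyber--Shalev style argument) the group has bounded ``composition length in the classical part'' and the whole group has order at most $p^{cn}$ for a constant $c = c(K)$. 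Fourth, count: there are at most (number of subgroups of $G$ of index $\le p^{cn}$) possible kernels, and this is itself at most $p^{c'n}$ because no-$K$-involved implies at most exponential subgroup growth; and for each kernel the quotient has order $\le p^{cn}$ so it supports at most $p^{cn}$ inequivalent representations. Multiplying gives $\rabs_n(G,\FF_p) \le p^{c''n}$, uniformly in $p$, which is exactly UBERG by Lemma~\ref{lem:absolutely} (or directly by the definition after also controlling the $R_n$ version).

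The hard part will be step three: extracting a clean, uniform bound of the form $|\rho(G)| \le p^{cn}$ with $c$ depending only on the forbidden group $K$ and not on $p$ or $n$. This is where one genuinely needs the classification of finite simple groups, through the quantitative results of \cite{BorovikPyberShalev1996} and the theory of primitive linear groups: the point is that the only obstruction to such a bound is unbounded alternating (or large-rank classical) sections, and forbidding a single finite group $K$ from being involved caps the degree of alternating sections, hence — by Jordan-type theorems for finite linear groups in characteristic $p$, or the explicit estimates in \cite{JaikinPyber2011} already cited in the excerpt (Theorem~10.2 there, used in the proof of Proposition~\ref{prop:UBERGimpliesESG}) — caps the relevant composition data and yields the exponential order bound. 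I would therefore try to phrase step three so as to reuse Theorem~10.2 of \cite{JaikinPyber2011} and Lemma~\ref{lem:irreducibleFaithful} in the same spirit as Proposition~\ref{prop:UBERGimpliesESG}, running the argument ``in reverse'': there the failure of the exponential bound forced $\Alt(b)^s$-sections of unbounded size; here the absence of such sections (guaranteed once $K$ is any non-involved group, after passing to a possibly larger forbidden group containing all small alternating groups as needed) forces the exponential bound to hold. Assembling these pieces and keeping all constants uniform in $p$ is the crux; the counting in step four is then routine.
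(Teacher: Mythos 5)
Your overall strategy---bound the order of the image $\rho(G)$ of an irreducible representation $\rho\colon G\to\GL_n(\FF_p)$ by $p^{cn}$ using the Babai--Cameron--P\'alfy restriction coming from the forbidden section $K$, and then count---is exactly the paper's strategy, and your step three is essentially the paper's appeal to \cite[Corollary~3.3]{BabaiCameronPalfy1982} (the reduction to absolutely irreducible representations over general finite fields in your first step is unnecessary, since UBERG only concerns prime fields, but it is harmless).

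The genuine gap is in your step four, the count of kernels. ``At most exponential subgroup growth'' means $a_m(G)\leq C^{m}$ for the number of (normal) subgroups of \emph{index} $m$; hence the number of open normal subgroups of index at most $p^{cn}$ is bounded only by roughly $C^{p^{cn}}$, which is \emph{doubly} exponential in $n$, not by $p^{c'n}$ as you claim. (Your bound would be valid if $G$ had polynomial subgroup growth, but not involving $K$ gives nothing close to that.) The count therefore has to be organised on the target side rather than the source side: one bounds the number of conjugacy classes of $d(G)$-generated irreducible subgroups $H\leq\GL_n(\FF_p)$ by $p^{c'n}$ --- this is \cite[Prop.~6.1]{JaikinPyber2011}, or alternatively \cite[Theorem~1.3]{BorovikPyberShalev1996} applied to the class $\mathcal{G}(c_0)$ --- and then, for each such $H$, uses $|H|\leq p^{cn}$ and the $d$-generation of $G$ to bound the number of epimorphisms $G\twoheadrightarrow H$ by $|H|^{d}\leq p^{cnd}$, giving $r_n(G,\FF_p)\leq p^{(c'+cd)n}$. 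Without an input of this kind (a bound on irreducible linear groups of given degree, not on finite-index subgroups of $G$), the argument does not close.
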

\begin{proof}
  By assumption $G$ is $d$-generated for some integer $d$.
  Clearly, if $G$ does not involve some finite group $K$, then
  it does neither involve any alternating groups $\Alt(m)$ for large $m$ nor any classical groups of Lie type of large rank.
  It follows that the set of finite quotients of $G$ is contained in the Babai-Cameron-P\'alfy class $\mathcal{G}(c_0)$ (see \cite{BabaiCameronPalfy1982}) for some large constant $c_0$.
  Corollary 3.3 in \cite{BabaiCameronPalfy1982} implies that there is a constant $c > 0$ with the following property:
  If $\rho\colon G \to \GL_n(\FF_p)$ is an irreducible representation, then $|\rho(G)| \leq p^{cn}$. 
  
  As a next step, we need to know that the number of irreducible subgroups of $\GL_n(\FF_p)$ which are isomorphic to finite quotients of $G$ is small.
  By \cite[Prop.\ 6.1]{JaikinPyber2011} there
  is a constant $c' > 0$ such that the number of conjugacy classes of $d$-generated irreducible subgroups of $\GL_n(\FF_p)$ is at most $p^{c'n}$ for every prime~$p$.
  In total we obtain $r_n(G,\FF_p) \leq p^{c'n}p^{cnd} = p^{(c'+cd)n}$.
\end{proof}

\begin{rmk}
 It is possible to replace \cite[Prop.\ 6.1]{JaikinPyber2011} in the proof by a different argument to obtain a suitable constant $c'$ as in the proof above.
 Indeed, since $G$ is in the class $\mathcal{G}(c_0)$, it follows from a theorem of Borovik, Pyber and Shalev \cite[Theorem~1.3]{BorovikPyberShalev1996} that there is a suitable  constant $c' > 0$.
\end{rmk}

\begin{cor}
 Every finitely generated pro-solvable group has UBERG.
\end{cor}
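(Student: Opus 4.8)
The plan is to deduce this corollary directly from the preceding theorem (the criterion for UBERG): it suffices to check that a finitely generated pro-solvable group cannot involve some fixed finite group, and for this the natural candidate is a non-solvable group such as $\Alt(5)$.

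First I would recall the characterisation of pro-solvability: a profinite group $G$ is pro-solvable precisely when every finite continuous quotient of $G$ is solvable, equivalently when $G \cong \varprojlim_i G_i$ for an inverse system of finite solvable groups $G_i$. Next I would verify that pro-solvability passes to closed subgroups. Writing $G = \varprojlim_i G_i$ with each $G_i$ finite solvable, a closed subgroup $H \leq G$ is the inverse limit of its images $H_i \leq G_i$; since a subgroup of a solvable group is solvable, each $H_i$ is solvable, and hence $H$ is pro-solvable. In particular, every open subgroup of $G$ is pro-solvable, so all of its finite continuous quotients are solvable. Consequently the non-solvable group $\Alt(5)$ is not involved in $G$.

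Finally, since $G$ is finitely generated and does not involve $\Alt(5)$, the preceding theorem applies and shows that $G$ has UBERG, as desired.

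I do not expect any genuine obstacle in this argument; the only point that requires (entirely routine) justification is the permanence of pro-solvability under passage to closed subgroups, which is standard and was sketched above.
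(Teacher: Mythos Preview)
Your argument is correct and is exactly the intended one: the corollary follows immediately from the preceding theorem once one observes that a pro-solvable group cannot involve a non-solvable finite group such as $\Alt(5)$. The paper states the corollary without further proof, so your justification is precisely what is being left to the reader.
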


This results uses deep ingredients. However, 
in order to see that every finitely generated pro-$p$ group has UBERG it suffices to use an elementary argument.
\begin{prop}
 Let $p$ be a prime number. Every finitely generated pro-$p$ group has UBERG. 
\end{prop}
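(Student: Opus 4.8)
The plan is to show directly that a finitely generated pro-$p$ group $G$ has at most exponentially many irreducible $\FF_p$-representations of each dimension, by reducing everything to counting quotients. The key observation is that for a pro-$p$ group, every finite simple module over $\FF_\ell$ is trivial unless $\ell = p$, and moreover the only irreducible $\FF_p[G]$-module is the trivial one-dimensional module: indeed, if $(\rho,V)$ is an irreducible $\FF_p$-representation of $G$, then $\rho(G)$ is a finite $p$-group acting irreducibly on the $\FF_p$-vector space $V$, and a finite $p$-group acting on a nonzero $\FF_p$-vector space always has a nonzero fixed vector (this is the standard orbit-counting argument: orbits have $p$-power size, and $\{0\}$ is an orbit, so the fixed points are not just $\{0\}$). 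Hence the fixed subspace $V^G$ is a nonzero subrepresentation, so by irreducibility $V = V^G$ and $\rho$ is trivial; since $V$ is irreducible this forces $\dim_{\FF_p} V = 1$.

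Consequently $r_1(G,\FF_p) = 1$ and $r_n(G,\FF_p) = 0$ for all $n \geq 2$, and for every prime $\ell \neq p$ the same argument (a finite $p$-group has no nontrivial irreducible modules in cross characteristic, since the only composition factor of the regular representation that can appear is forced to be trivial once one notes $\FF_\ell[P]$ has the trivial module as its unique simple module when $P$ is a $p$-group — equivalently, $P$ acting on an $\FF_\ell$-vector space with $\ell\neq p$ is completely reducible and any irreducible constituent has trivial Brauer character, hence is trivial) gives $r_1(G,\FF_\ell)=1$ and $r_n(G,\FF_\ell) = 0$ for $n \geq 2$. Thus trivially $r_n(G,\FF_q) \leq q^{en}$ for, say, $e = 1$ and all finite fields $\FF_q$ and all $n$, which is far stronger than UBERG requires. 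So in fact the statement is immediate once one records that a finitely generated pro-$p$ group has only the trivial irreducible modular representation in every characteristic.

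The main obstacle is essentially nil here — the proof is elementary as the paper promises — but one should be slightly careful to phrase the cross-characteristic case correctly. Alternatively, and perhaps more in the spirit of the surrounding sections, I would instead verify UBERG via the completed group algebra using Proposition~\ref{prop:UBERGandPMIG}: for a pro-$p$ group $G$, the completed group algebra $\cGrAlg{G}$ is a pro-(finite ring), and one shows that each of its proper open maximal left ideals has index $p$ and corresponds to the augmentation-type quotient; more precisely, modulo $p$ the algebra $\FF_p\llbracket G \rrbracket$ is a local ring (its Jacobson radical is the augmentation ideal, since $G$ is pro-$p$), so there is a \emph{unique} maximal ideal of $p$-power index, namely of index $p$, and over any $\ZZ/\ell^j\ZZ$-coefficients the simple quotients again all have index $\ell$ and are unique. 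Hence $m_n^\triangleleft(\cGrAlg{G}) \leq 1$ for all $n$, which is polynomial maximal ideal growth, and UBERG follows from Proposition~\ref{prop:UBERGandPMIG}. Either route works; I would present the first, since it is self-contained and does not invoke the structure of $\cGrAlg{G}$.

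Concretely the write-up would be: (i) state that it suffices to bound $r_n(G,\FF_q)$; (ii) prove the fixed-point lemma for $p$-groups on $\FF_q$-vector spaces in characteristic $p$ and note complete reducibility in cross characteristic; (iii) deduce $r_1 = 1$, $r_n = 0$ for $n\geq 2$ over every prime field (and hence over every finite field, via Lemma~\ref{lem:absolutely} or directly); (iv) conclude $r_n(G,\FF_p) \leq p^{n}$ trivially, so $G$ has UBERG. Total length: under half a page.
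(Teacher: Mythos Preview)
Your argument in characteristic $p$ is correct and matches the paper: a finite $p$-group acting on a nonzero $\FF_p$-vector space has a nonzero fixed vector, so the only irreducible $\FF_p[G]$-module is trivial.

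However, your cross-characteristic claim is \emph{false}, and this is a genuine gap. It is not true that a finite $p$-group $P$ has only the trivial simple module over $\FF_\ell$ when $\ell \neq p$. On the contrary, by Maschke's theorem $\FF_\ell[P]$ is semisimple, and over $\overline{\FF_\ell}$ it has as many simple modules as conjugacy classes of $P$. For a concrete counterexample: $C_3$ over $\FF_2$ has a nontrivial $2$-dimensional irreducible representation (since $\FF_2[C_3] \cong \FF_2 \times \FF_4$), so $r_2(C_3,\FF_2)=1$. For a free pro-$p$ group on $d\geq 2$ generators the numbers $r_n(G,\FF_\ell)$ with $\ell\neq p$ grow and must actually be bounded; your proposal gives no bound at all. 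The same error recurs in your alternative route via $\cGrAlg{G}$: the ring $\FF_\ell\llbracket G\rrbracket$ is \emph{not} local when $\ell\neq p$, so the maximal-ideal count is not $1$.

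The paper handles the case $q \neq p$ differently: any continuous homomorphism $G \to \GL_n(\FF_q)$ has $p$-group image, hence lands (up to conjugacy) in a fixed Sylow-$p$ subgroup $S \leq \GL_n(\FF_q)$. Thus $r_n(G,\FF_q) \leq |S|^{d}$, and one then invokes an explicit bound $|S| \leq p^n q^{pn}$ (from Weir's description of these Sylow subgroups) to get $r_n(G,\FF_q) \leq q^{cn}$ for a constant $c$ depending only on $p$ and $d$. That Sylow argument is precisely what is missing from your write-up.
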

\begin{proof}
 Let $G$ be a $d$-generated pro-$p$ group.
 First observe that $G$ has exactly one irreducible representation over $\FF_p$, namely the trivial representation.
 
 Let $q$ be a prime different from $p$.
 The image of any homomorphism from $G$ into the finite group $\GL_n(\FF_q)$ is a $p$-group and hence is contained in some Sylow-$p$ subgroup.
 Fix some Sylow-$p$ subgroup $S$ of $\GL_n(\FF_q)$.
 We can bound the number of $n$-dimensional irreducible representations of $G$ from above by the number of conjugacy classes of homomorphisms into $\GL_n(\FF_q)$.
 And since all Sylow-$p$ subgroups are conjugate, we obtain the upper bound
 \begin{equation*}
   r_n(G,\FF_q) \leq \left| S \right|^{d}.
 \end{equation*}
 In fact, it follows from the argument in \cite{Weir1955} (with a small extension to the prime $p=2$) that the order of the Sylow-$p$ subgroup is bounded by $|S| \leq p^n q^{pn}$. 
 We conclude that $G$ has UBERG. 
\end{proof}

\subsection{Non-UBERG groups}

\subsubsection{Large groups}

Free groups admit many homomorphisms into any finite group, so the following proposition is not surprising.

\begin{prop}
 Non-abelian free profinite groups do not have UBERG.
\end{prop}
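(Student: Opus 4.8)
The plan is to show that a non-abelian free profinite group $F$ has superexponential representation growth over a single finite field, and in fact over $\FF_p$ for any fixed prime $p$; by Lemma~\ref{lem:absolutely} (or even the definition of UBERG directly) this is more than enough. The key point is that $\Hom(F, H)$ is as large as possible for any finite group $H$: if $F$ is free of rank $d \geq 2$, then $|\Hom(F,H)| = |H|^d$, since a homomorphism is just an arbitrary choice of $d$ elements of $H$.

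First I would fix a prime $p$ and, for each $n$, exhibit many pairwise inequivalent $n$-dimensional irreducible $\FF_p$-representations of $F$. A clean source: take $H$ to be a finite group possessing a faithful irreducible $\FF_p$-representation of dimension $n$ — for instance one can use $\GL_n(\FF_p)$ itself, or a suitable subgroup — and count the epimorphisms $F \twoheadrightarrow H$. Since $F$ is free of rank $d\ge 2$ and non-abelian, the number of such epimorphisms is comparable to $|H|^d$ (the non-surjective homomorphisms factor through proper subgroups and are negligible by a standard Eulerian-function estimate, or one can simply use that $F$ surjects onto $H$ and pull back). Two epimorphisms with the same kernel differ by an automorphism of $H$, so the number of distinct kernels is at least $|H|^d / |\mathrm{Aut}(H)| \geq |H|^{d-1}$. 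Composing each such quotient with a fixed faithful irreducible representation of $H$ of dimension $n$ yields that many inequivalent irreducible $n$-dimensional $\FF_p$-representations of $F$.

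It then remains to observe that $|H|$ grows faster than any single exponential $p^{en}$ as $n \to \infty$: choosing $H$ with $|H| \geq p^{c n^2}$ for some fixed $c>0$ (again $\GL_n(\FF_p)$, of order roughly $p^{n^2}$, works, using that it acts irreducibly on $\FF_p^n$), we get
\[
  r_n(F,\FF_p) \;\geq\; |H|^{d-1} \;\geq\; p^{c(d-1)n^2},
\]
which exceeds $p^{en}$ for every fixed $e$ once $n$ is large. Hence $F$ does not have at most exponential representation growth over $\FF_p$, so it is not UBERG, and therefore (by Theorem~\ref{thm:PFRisUBERG}, noting that a non-abelian free profinite group is finitely presented) not PFR.

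The only mildly delicate point — the ``main obstacle'', though it is routine — is making the passage from homomorphisms to \emph{inequivalent irreducible representations} precise: one must control both that almost all homomorphisms $F\to H$ are surjective (so that the kernel is well-defined and the action genuinely factors through $H$, keeping the representation irreducible), and that dividing by $|\mathrm{Aut}(H)|$ still leaves a superexponential count. Both are handled by crude estimates: $\sum_{K < H}|K|^d \le |H|^d \sum_{K<H} [H:K]^{-d}$ is a small fraction of $|H|^d$, and $|\mathrm{Aut}(H)| \leq |H|^{\,c\log|H|}$ or, for the explicit choice $H = \GL_n(\FF_p)$, simply $|\mathrm{Aut}(H)| \le |H|^{O(1)}$, which is absorbed by the $p^{cn^2}$ growth.
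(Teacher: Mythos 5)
Your overall strategy --- produce superexponentially many inequivalent irreducible $\FF_p$-representations of $F$ by exploiting $|\Hom(F,H)|=|H|^d$ with $H=\GL_n(\FF_p)$ --- is exactly the strategy of the paper's Lemma~\ref{lem:freeNotPFR} (the paper also notes a one-line alternative: free groups have super-exponential subgroup growth, and UBERG implies at most exponential subgroup growth by Proposition~\ref{prop:UBERGimpliesESG}). However, your specific implementation via \emph{epimorphisms} has a genuine gap at the step you yourself flag. To count kernels as $|\mathrm{Epi}(F,H)|/|\mathrm{Aut}(H)|$ you must show that a substantial proportion of the $|H|^d$ homomorphisms are surjective, and your proposed justification --- that $\sum_{K<H}[H:K]^{-d}$ is small --- is false as a ``crude estimate'': the number of subgroups of index $m$ in a group of order $N$ can be of size $m^{c\log N}$ (already for elementary abelian groups), so the sum over all proper subgroups is not controlled by the indices alone. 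Even restricting to maximal subgroups, bounding their number per index for $H=\GL_n(\FF_p)$ is not routine; the clean statement that two random elements generate $\GL_n(\FF_p)$ (modulo the determinant issue) with probability bounded away from $0$ is the Kantor--Lubotzky theorem and rests on the classification of finite simple groups. In addition, your blanket inequality $|H|^d/|\mathrm{Aut}(H)|\geq|H|^{d-1}$ presumes $|\mathrm{Aut}(H)|\leq|H|$, which fails for general $H$ (though it is harmless for $\GL_n(\FF_p)$, where $|\mathrm{Aut}(H)|\approx 2|H|$).

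The fix, which is what the paper actually does, is to drop surjectivity altogether: a homomorphism $\rho\colon F\to\GL_n(\FF_p)$ defines a reducible representation if and only if it is conjugate into some maximal parabolic $P(k,n-k,\FF_p)$, so
\[
r_n(F,\FF_p)\;\geq\;|\GL_n(\FF_p)|^{d-1}-\sum_{k=1}^{n-1}|P(k,n-k,\FF_p)|^{d-1},
\]
after dividing by (at most) $|\GL_n(\FF_p)|$ conjugates per equivalence class. Since $|P(k,n-k,\FF_p)|\leq p^{n^2-k(n-k)}$, the subtracted terms are a negligible fraction of $p^{n^2(d-1)}$, and one gets $r_n(F,\FF_p)\geq c_p^d\,p^{n^2(d-1)}$ with everything completely elementary. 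I recommend you recast your argument in this form rather than trying to repair the Eulerian-function estimate.
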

A profinite group $G$ is called \emph{large} if some open subgroup projects onto a non-abelian free profinite group.
\begin{cor}
 Large profinite groups do not have UBERG.
\end{cor}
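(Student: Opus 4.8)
The plan is to count the $n$-dimensional irreducible $\FF_p$-representations of a free profinite group $F = F_d$ of rank $d \ge 2$ essentially explicitly, and to show that for a fixed prime (say $p=2$) this count grows super-exponentially in $n$, so that no constant $e>0$ can satisfy $r_n(F,\FF_p)\le p^{en}$ uniformly. First I would reduce to a problem about tuples of matrices. Since $F$ is free profinite of rank $d$, continuous homomorphisms $F\to\GL_n(\FF_p)$ correspond bijectively to arbitrary $d$-tuples $(g_1,\dots,g_d)\in\GL_n(\FF_p)^d$ (the image is finite, hence closed of finite index, so the kernel is open). Two such tuples define isomorphic representations precisely when they are conjugate by an element of $\GL_n(\FF_p)$, and the representation is irreducible exactly when $g_1,\dots,g_d$ have no common proper invariant subspace. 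As each conjugation orbit has size at most $|\GL_n(\FF_p)|$, this yields
\[
 r_n(F,\FF_p)\ \ge\ \frac{\#\{\text{irreducible }d\text{-tuples in }\GL_n(\FF_p)^d\}}{|\GL_n(\FF_p)|}.
\]

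Next I would show that almost every $d$-tuple is irreducible. A $d$-tuple is reducible iff all of its entries lie in the stabilizer $P_W\le\GL_n(\FF_p)$ of some subspace $W$ with $1\le\dim W = m\le n-1$; for fixed $m$ such a stabilizer has order $|\GL_n(\FF_p)|/\binom{n}{m}_p$, where $\binom{n}{m}_p$ (the Gaussian binomial coefficient, equal to the number of $m$-dimensional subspaces) satisfies $\binom{n}{m}_p\ge p^{m(n-m)}\ge p^{\,n-1}$. By a union bound over all such $W$, the number of reducible $d$-tuples is at most
\[
 \sum_{m=1}^{n-1}\binom{n}{m}_p\left(\frac{|\GL_n(\FF_p)|}{\binom{n}{m}_p}\right)^{d} = |\GL_n(\FF_p)|^d\sum_{m=1}^{n-1}\binom{n}{m}_p^{\,1-d}\ \le\ (n-1)\,p^{-(n-1)}\,|\GL_n(\FF_p)|^d,
\]
where the last step uses $d\ge 2$. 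For $p\ge 2$ and $n$ sufficiently large this factor $(n-1)p^{-(n-1)}$ is smaller than $\tfrac12$, so at least half of all $d$-tuples are irreducible, and hence
\[
 r_n(F,\FF_p)\ \ge\ \tfrac12\,|\GL_n(\FF_p)|^{d-1}\ \ge\ \tfrac12\,p^{(d-1)n(n-1)},
\]
using $|\GL_n(\FF_p)|=\prod_{i=0}^{n-1}(p^n-p^i)\ge p^{n(n-1)}$. Fixing $p=2$ and letting $n\to\infty$, the right-hand side eventually exceeds $2^{en}$ for every fixed $e>0$ (it is enough that $n(n-1)-1>en$); therefore $F$ does not have UBERG. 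The corollary on large groups then follows immediately: if some open subgroup $H\le_o G$ projects onto a non-abelian free profinite group, then $H$ fails UBERG because UBERG passes to quotients, and hence $G$ fails UBERG by the virtual invariance established in Theorem~\ref{thm:openUBERG}.

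The only delicate point is the estimate in the middle paragraph: I must bound the number of $m$-dimensional subspaces and the index of the associated parabolic $P_W$ by Gaussian binomial coefficients, and make sure the resulting bound on the proportion of reducible tuples is genuinely uniform in the prime $p$. This is elementary linear algebra over finite fields, but it is the one place where a careless estimate would break the argument; everything else is bookkeeping with the universal property of $F$ and with conjugacy of representations.
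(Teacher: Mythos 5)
Your argument is correct and is essentially the paper's own elementary route: Lemma~\ref{lem:freeNotPFR} likewise counts homomorphisms $F\to\GL_n(\FF_p)$, discards the reducible ones by a union bound over maximal parabolic subgroups (your subspace stabilizers), and divides by $|\GL_n(\FF_p)|$ to get $r_n(F,\FF_p)\gtrsim p^{n^2(d-1)}$, after which the corollary follows exactly as you say from quotient-stability of UBERG and Theorem~\ref{thm:openUBERG}. Your version is slightly less explicit (asymptotic in $n$ rather than with the uniform constant $c_p$), but the substance is identical.
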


It is known that non-abelian free groups have super-exponential subgroup growth (see Chapter 2 in \cite{SubgroupGrowth}).
Henceforth the proposition follows from Proposition \ref{prop:UBERGimpliesESG}.
However, the result follows also immediately from the next elementary lemma.

\begin{lem}\label{lem:freeNotPFR}
 Let $F$ be a free group on $d\geq 2 $ generators. For every prime $p$ the number of irreducible $\FF_p$-representations of $F$ satisfies
 \begin{equation*}
      r_n(F,\FF_p) \geq c_p^d p^{n^2(d-1)}.
 \end{equation*}
 where $c_p = (1 - \frac{1}{p} - \frac{1}{p^2}) \geq \frac{1}{4}$.
\end{lem}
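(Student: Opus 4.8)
The plan is to count irreducible $\FF_p$-representations of the free group $F = F_d$ by counting homomorphisms $F \to \GL_n(\FF_p)$ whose image acts irreducibly, and then dividing by the number of representations in an isomorphism class. Since $F$ is free on $d$ generators, $\Hom(F,\GL_n(\FF_p)) = \GL_n(\FF_p)^d$, so there are exactly $|\GL_n(\FF_p)|^d$ such homomorphisms. The first step is to bound from below the proportion of $d$-tuples $(A_1,\dots,A_d) \in \GL_n(\FF_p)^d$ that generate an irreducible subgroup; I expect this is where the constant $c_p^d$ comes from. A clean way: fix the first generator $A_1$ to be (say) a regular semisimple element or more simply argue that for a \emph{random} tuple the probability of reducibility is small. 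The cheapest route is to note that a tuple fails to be irreducible only if it stabilizes a proper nonzero subspace $W \subsetneq \FF_p^n$; for a fixed $W$ of dimension $j$ the probability that all $d$ matrices stabilize $W$ is $(|P_j|/|\GL_n|)^d$ where $P_j$ is the parabolic stabilizing $W$, and summing over the at most $p^{j(n-j)}\cdot(\text{small})$ choices of $W$ of each dimension gives a bound of the form $1 - c_p^d$ for the irreducible proportion, with $c_p = 1 - 1/p - 1/p^2$. (Actually one gets $c_p \ge 1 - \sum_{j\ge 1}$ of something like $p^{-j} $ geometric-type terms; the stated $1 - 1/p - 1/p^2$ should drop out after bounding the number of subspaces and the index of the parabolic.)

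The second step is to convert the count of irreducible homomorphisms into a count of isomorphism classes of irreducible representations. Two homomorphisms give isomorphic representations precisely when they differ by conjugation in $\GL_n(\FF_p)$, and an irreducible representation has centralizer equal to the scalars $\FF_p^\times$ by Schur's lemma (here we are over $\FF_p$, but every irreducible $\FF_p$-representation that is \emph{absolutely} irreducible has scalar endomorphisms — and a general irreducible one has endomorphism ring a field extension, which only \emph{decreases} the orbit count, so the bound $|\GL_n(\FF_p)|/|\FF_p^\times|$ on the orbit size is still valid as an upper bound). Hence
\begin{equation*}
  r_n(F,\FF_p) \;\ge\; \frac{(\text{number of irreducible homomorphisms})}{|\GL_n(\FF_p)|/(p-1)} \;\ge\; c_p^d \,(p-1)\, |\GL_n(\FF_p)|^{d-1}.
\end{equation*}

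The third step is the elementary order estimate $|\GL_n(\FF_p)| = p^{n^2}\prod_{i=1}^n(1-p^{-i}) \ge p^{n^2}\cdot c_p$ where again the product over $i\ge 1$ of $(1-p^{-i})$ is bounded below by $1 - 1/p - 1/p^2 = c_p$ (using $\prod(1-x_i) \ge 1 - \sum x_i$ and $\sum_{i\ge 1} p^{-i} = 1/(p-1) \le 1/p + 1/p^2$ for $p\ge 2$; one checks the numerics, e.g. $c_2 = 1/4$). Plugging this in, $|\GL_n(\FF_p)|^{d-1} \ge c_p^{d-1} p^{n^2(d-1)}$, and combining with the previous display and absorbing $(p-1)$ appropriately gives $r_n(F,\FF_p) \ge c_p^{\,d} p^{n^2(d-1)}$ as claimed; the verification $c_p \ge 1/4$ for all $p$ is immediate since $c_p$ is increasing in $p$ and $c_2 = 1 - 1/2 - 1/4 = 1/4$.

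The main obstacle I anticipate is getting the \emph{same} constant $c_p$ to govern all three places it appears (the irreducible-proportion bound, the $|\GL_n|$ lower bound, and the Schur-orbit correction) cleanly enough that the final exponent on $c_p$ is exactly $d$ rather than something messier — this is really a bookkeeping issue of arranging the inequalities so the geometric-series tails all collapse to $1 - 1/p - 1/p^2$. A secondary subtlety is the endomorphism-ring point over the non-algebraically-closed field $\FF_p$: one must make sure the orbit-size bound used is an \emph{upper} bound (so that dividing gives a \emph{lower} bound on $r_n$), which it is since a larger endomorphism field only shrinks orbits. None of this is deep; it is a direct computation once the random-matrix reducibility estimate is set up.
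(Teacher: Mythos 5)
Your proposal is correct and follows essentially the same route as the paper: count homomorphisms $F\to\GL_n(\FF_p)$, bound the reducible ones by a union bound over proper invariant subspaces (equivalently, over conjugates of the maximal parabolics), divide by the size of a conjugation orbit, and use $|\GL_n(\FF_p)|\ge c_p\,p^{n^2}$. The bookkeeping you flag does close exactly as you hope: one factors $c_p^{d-1}p^{n^2(d-1)}$ out of both the main term and the parabolic correction terms (using $|P(k,n-k,\FF_p)|\le c_p\,p^{n^2-k(n-k)}$, valid since $(1-1/p)^2\le c_p$), and the remaining factor $1-\sum_{k=1}^{n-1}p^{-k(n-k)(d-1)}\ge c_p$ supplies the last power of $c_p$, so the extra $(p-1)$ from your sharper Schur-lemma orbit count is not even needed.
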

\begin{proof}
We will proceed by counting certain homomorphisms of $F$ into $\GL_n(\FF_p)$.
We will use that for any finite group $S$, the number of homomorphisms from $F$ into $S$ is $|S|^d$.

Let $n = n_1 + n_2$ be a partition of $n$ and let $P(n_1,n_2,\FF_p)$ denote the associated maximal parabolic subgroup of $\GL_n(\FF_p)$;
that is 
\begin{equation*}
   P(n_1,n_2,\FF_p) = \begin{pmatrix}
                                \GL_{n_1}(\FF_p) & * \\
                                 0 & \GL_{n_2}(\FF_p) \\
                             \end{pmatrix} .
\end{equation*}
Let $\rho\colon F \to \GL_n(\FF_p)$ be a homomorphism and assume that $\rho$ defines a reducible representation of $F$.
In this case there are $n_1, n_2 \geq 1$ such that $\rho$ is conjugate to a homomorphism into $P(n_1,n_2,\FF_p)$.
Conversely, every homomorphism of $F$ into some maximal parabolic subgroup defines a reducible representation. Observe further, that every such homomorphism is
conjugate to at most $\frac{|\GL_n(\FF_p)|}{|P(n_1,n_2,\FF_p)|}$ homomorphisms whose image is not contained in $P(n_1,n_2,\FF_p)$.
Since every homomorphism from $F$ to $\GL_n(\FF_p)$ which defines an irreducible representation is conjugate to at most $|\GL_n(\FF_p)|$ others, we conclude
\begin{equation}\label{eq:freeirrednumber}
   r_n(F,\FF_p) \geq |\GL_n(\FF_p)|^{d-1} - \sum_{k = 1}^{n-1} |P(k,n-k,\FF_p)|^{d-1}.
\end{equation}
To finish the proof we will use the following simple estimates. One can check (using only Boole's inequality and the geometric series) that
\begin{equation*} 
   |\GL_n(\FF_p)| = p^{n^2} \prod_{i=1}^n (1-\frac{1}{p^i}) \geq  p^{n^2} c_p.
\end{equation*}
Further, we observe that
\begin{align*}
   |P(k,n-k,\FF_p)| &= p^{n^2 - k(n-k)} \prod_{i=1}^k(1-\frac{1}{p^i}) \prod_{i=1}^{n-k}(1-\frac{1}{p^i}) \\
    &\leq p^{n^2 - k(n-k)} (1-\frac{1}{p})^2 \leq p^{n^2 - k(n-k)} c_p.
\end{align*}
We use these bounds in equation \eqref{eq:freeirrednumber} to obtain
\begin{align*}
   r_n(F,\FF_p) &\geq c_p^{d-1} p^{n^2(d-1)} \left( 1- \sum_{k=1}^{n-1} p^{-k(n-k)(d-1)} \right)\\
   &\geq c_p^{d-1} p^{n^2(d-1)} \left( 1 - (n-1)p^{-(n-1)(d-1)} \right)\\
   &\geq c_p^{d-1} p^{n^2(d-1)} \left( 1 - p^{-(n-1)(d-2) - 1} \right)
   \geq c_p^d p^{n^2(d-1)}   
\end{align*}
where we use $n-1 \leq p^{n-2}$ going from the second to the third line.
\end{proof}

\section{Weakly positively finitely related groups}\label{sec:weaklyPFR}

In this section we briefly discuss a mysterious property of groups, namely \emph{weakly PFR}.
The following observation is a consequence of Theorem~\ref{thm:PFRisUBERG}.

\begin{prop}\label{prop:d+1PFR}
 Let $G$ be a finitely presented profinite group with $d = d(G)$ generators.
 If the $d+1$-generator presentation
 \begin{equation*}
  1 \longrightarrow R \longrightarrow F_{d+1} \longrightarrow G \longrightarrow 1
 \end{equation*}
 is PFR, then $G$ is PFR.
\end{prop}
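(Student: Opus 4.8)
The plan is to deduce from the hypothesis that $G$ has uniformly bounded exponential representation growth, and then invoke Theorem~\ref{thm:PFRisUBERG} — which applies since $G$ is finitely presented — to conclude that $G$ is PFR. The mechanism is that the minimal \emph{abelian} extensions of $G$ are, by Lemma~\ref{lem:minimalExtensionsBasic}, precisely those of prime-power degree and are always $(d+1)$-generated, so they are already counted by the $(d+1)$-generator presentation; and these extensions control $r_k(G,\FF_p)$ by Proposition~\ref{prop:represdimpk}.

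First I would translate the hypothesis into a growth statement. By Proposition~\ref{prop:PFRiffPMNSG}, the $(d+1)$-generator presentation $1\to R\to F_{d+1}\to G\to 1$ being PFR means that $R$ has polynomial maximal $F_{d+1}$-stable subgroup growth; and by the inequalities $e_{n,d+1}^{\mathrm{min}}(G)\le m_n^{F_{d+1}}(R)\le n^{d+1}e_{n,d+1}^{\mathrm{min}}(G)$ established in the proof of Theorem~\ref{thm:minimalExtensionGrowth}, this is equivalent to the existence of a constant $c>0$ with $e_{n,d+1}^{\mathrm{min}}(G)\le n^{c}$ for all $n$. (Here "the $(d+1)$-generator presentation" is well defined by the uniqueness of profinite presentations with a prescribed number of generators, \cite[Prop.~2.4]{lubotzky:prof_pres}.)

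Next, fix a prime $p$ and an integer $k\ge 1$. Every minimal extension of $G$ of degree $p^k$ is abelian, and by Lemma~\ref{lem:minimalExtensionsBasic} every minimal abelian extension of $G$ is $(d+1)$-generated; hence every minimal extension of degree $p^k$ is $(d+1)$-generated, so $e_{p^k}^{\mathrm{min}}(G)=e_{p^k,d+1}^{\mathrm{min}}(G)\le p^{ck}$. Combining this with the first inequality of Proposition~\ref{prop:represdimpk} gives $r_k(G,\FF_p)\le e_{p^k}^{\mathrm{min}}(G)\le p^{ck}$, with the constant $c$ independent of $p$ and $k$. Thus $G$ has UBERG, and since $G$ is finitely presented, Theorem~\ref{thm:PFRisUBERG} yields that $G$ is PFR.

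I do not expect a genuine obstacle here: the argument is essentially bookkeeping built on results already established. The only conceptual input is the observation from Lemma~\ref{lem:minimalExtensionsBasic} that passing from $d+1$ to $d+2$ generators affects only the \emph{non-abelian} minimal extensions, which are in turn controlled by subgroup growth inside the proof of Theorem~\ref{thm:PFRisUBERG}; the point requiring a little care is simply to note that prime-power degree forces abelianness, so that the $(d+1)$-presentation already detects all of the representation-theoretic information needed for UBERG.
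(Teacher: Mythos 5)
Your argument is correct and follows essentially the same route as the paper: both deduce UBERG from the hypothesis by observing that the $(d+1)$-generated minimal abelian extensions (the semidirect products $V_\rho\rtimes G$) are detected by the $(d+1)$-generator presentation, and then conclude via Theorem~\ref{thm:PFRisUBERG}. The paper simply shortcuts your bookkeeping by passing directly from $r_k(G,\FF_p)$ to $m_{p^k}^{F_{d+1}}(R)$ and invoking Proposition~\ref{prop:PFRiffPMNSG}, rather than routing through the extension-counting quantities $e_{n,d+1}^{\mathrm{min}}(G)$.
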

\begin{proof}
  For simplicity we write $F = F_{d+1}$.
  Let $p$ be a prime and let $(\rho,V_\rho)$ be an irreducible representation of $G$ over $\FF_p$ of dimension $k$.
  The semidirect product $E_V = V_\rho \rtimes G$ is $(d+1)$-generated (see Lemma \ref{lem:minimalExtensionsBasic}) 
  and yields a minimal extension of $G$. Every such split extension can be obtained from some maximal $F$-stable
  subgroup $M_\rho$ of $R$. In particular, $m_{p^k}^{F}(R) \geq r_k(G,\FF_p)$.
  Proposition \ref{prop:PFRiffPMNSG} implies that $G$ has UBERG and thus Theorem \ref{thm:PFRisUBERG} implies the proposition.
\end{proof}

We conclude that PFR, which by definition is a property of all presentations of a group, can be read off from a single presentation.
However, the presentation is \emph{not} the one with the minimal number of generators but the one with one extra generator. This naturally leads to the following definition.

\begin{defn}
A finitely generated profinite group $G$ with $d = d(G)$ is called \emph{weakly PFR} if the minimal presentation
\begin{equation*}
  1 \longrightarrow R \longrightarrow F_{d} \longrightarrow G \longrightarrow 1
 \end{equation*}
 is PFR. 
\end{defn}

Clearly, a PFR group is weakly PFR. The free profinite group $F_d$ on $d\geq 2$ generators is clearly weakly PFR.
However, by Lemma \ref{lem:freeNotPFR} the group $F_d$ is not PFR.
Weakly PFR is a very fragile property. The next proposition shows that the group $C_m \times F_2$ is not weakly PFR. Hence, this property it is not preserved by direct products and is not a property of the commensurability class. The reason might be that the minimal number of generators is a very intricate invariant.
\begin{prop}\label{prop:weaklyPFRandDirectProducts}
Let $G,H$ be non-trivial finitely generated profinite groups. Assume that $G$ does not have UBERG and $d(G\times H) = d(G) + d(H)$. Then $G \times H$ is not weakly PFR.
\end{prop}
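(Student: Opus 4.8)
The goal is to show that if $G$ does not have UBERG and $d(G \times H) = d(G) + d(H)$, then the minimal presentation $1 \to R \to F_{d(G \times H)} \to G \times H \to 1$ is not PFR. The strategy is to produce, for infinitely many $n$, many maximal $F$-stable subgroups of $R$ of index $n$, so that by Proposition~\ref{prop:PFRiffPMNSG} this presentation fails to be PFR. Concretely, I would construct enough minimal abelian extensions of $G \times H$ that are $d(G\times H)$-generated, using irreducible representations of $G$ together with the fact that $d(H) \ge 1$ lets us ``afford'' to realize the corresponding semidirect product on exactly $d(G) + d(H)$ generators.

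\textbf{Key steps.} First, since $G$ does not have UBERG, by Lemma~\ref{lem:absolutely} (or directly from the definition) there is no uniform exponential bound: for every constant $e > 0$ there are a prime $p$ and an integer $k$ with $r_k(G,\FF_p) > p^{ek}$; in fact one gets infinitely many such pairs. Second, given an irreducible $\FF_p$-representation $(\rho, V_\rho)$ of $G$ of dimension $k$, inflate it to $G \times H$ via the first projection; this is still irreducible, and the semidirect product $E = V_\rho \rtimes (G \times H)$ is a minimal abelian extension of $G \times H$. The crucial point is a generation count: one lifts a minimal generating set of $G \times H$ (which has $d(G) + d(H)$ elements by hypothesis) to $E$ and must check that together with $V_\rho$ this already generates $E$ — here one uses that $d(H) \ge 1$, so one of the lifted generators, say a lift of a generator coming from the $H$-factor, can be adjusted by an arbitrary vector of $V_\rho$, and since $V_\rho$ is $H$-trivial but $G$-irreducible, the $G$-conjugates of that vector span $V_\rho$; thus $E$ is $(d(G)+d(H))$-generated, i.e.\ $d(G\times H)$-generated. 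Hence each such representation yields (at least one) maximal $F$-stable subgroup of $R$ of index $p^k$, and distinct representations yield distinct subgroups, so $m_{p^k}^{F}(R) \ge r_k(G, \FF_p)$. Third, combining with the first step, $m_n^{F}(R)$ has no polynomial bound in $n$, so by Proposition~\ref{prop:PFRiffPMNSG} the minimal presentation is not PFR, i.e.\ $G \times H$ is not weakly PFR.

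\textbf{Main obstacle.} The delicate step is the generation count showing $E = V_\rho \rtimes (G \times H)$ is generated by exactly $d(G) + d(H)$ elements rather than one more. The naive argument (lift generators of $G\times H$ and adjoin a nonzero vector of $V_\rho$) costs one extra generator, which would be fatal since we need the \emph{minimal} presentation. The fix exploits the product structure and the hypothesis $d(G\times H) = d(G) + d(H)$: choose a minimal generating tuple $(g_1,\dots,g_{d(G)}, h_1, \dots, h_{d(H)})$ with the $g_i$ generating $G$ and the $h_j$ generating $H$; lift to $(\tilde g_1, \dots, \tilde g_{d(G)}, \tilde h_1, \dots, \tilde h_{d(H)})$ in $E$, then replace $\tilde h_1$ by $\tilde h_1 v$ for a suitable $v \in V_\rho$. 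Since $H$ acts trivially on $V_\rho$ and $G$ acts irreducibly, the subgroup generated by the lifts together with $\tilde h_1 v$ contains $\langle \tilde g_i\rangle$-conjugates of $v$, whose span is a nonzero $G$-submodule of $V_\rho$, hence all of $V_\rho$; therefore the subgroup is everything. One must also handle the mild subtlety that $v$ should be chosen nonzero, which is possible as $V_\rho \ne 0$ since $H$ is non-trivial guarantees $d(H) \ge 1$ and the construction makes sense. Once this count is in place, everything else is a direct appeal to the cited results.
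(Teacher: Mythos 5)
Your proposal is correct and follows essentially the same route as the paper: the paper isolates your generation count as Lemma~\ref{lem:directProductsGenerators} (replacing a lift of the $H$-generator $h_1$ by $(v,1,h_1)$ and extracting $[(0,g_1,1),(v,1,h_1)] = (\rho(g_1)v - v,1,1)$), then applies Gasch\"utz' Lemma to the minimal presentation to get $m^F_{p^k}(R) \ge r_k(G,\FF_p)$ and concludes via Proposition~\ref{prop:PFRiffPMNSG}. The only point to make explicit is that $(\rho,V_\rho)$ must be non-trivial so that some generator of $G$ moves $v$; discarding the trivial representation (the paper restricts to $k>1$) does not affect the failure of the polynomial bound.
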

The following lemma will be used to prove the proposition.
\begin{lem}\label{lem:directProductsGenerators}
Let $G$ and $H$ be non-trivial finitely generated profinite groups and let $(\rho, V_\rho)$ be an irreducible non-trivial representation of $G$ over some finite field $\FF_p$.
Then $d( (V_\rho \rtimes G) \times H) \leq d(G) + d(H)$.
\end{lem}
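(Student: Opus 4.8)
The plan is to produce an explicit generating set for $(V_\rho \rtimes G) \times H$ of the required size. Write $d = d(G)$ and $e = d(H)$, and fix minimal generating tuples $(g_1,\dots,g_d)$ of $G$ and $(h_1,\dots,h_e)$ of $H$. The key observation is that in the semidirect product $V_\rho \rtimes G$ one does not need to spend a generator on $V_\rho$ separately, as long as the $G$-action on $V_\rho$ is nontrivial and irreducible: one perturbs the lifts of the generators of $G$ by suitable vectors from $V_\rho$. Concretely, I would try the $d+e$ elements
\[
 \bigl((v_1,g_1),h_1\bigr),\ \dots,\ \bigl((v_d,g_d),h_d\bigr),\ \bigl((0,g_{d+1}'),h_{d+1}\bigr),\ \dots
\]
but since the two factors have possibly different numbers of generators one must be a little careful about how to interleave; the cleanest approach is to take the $\max(d,e)$-indexed generators to carry both a $G$-part and an $H$-part, and pad the shorter list with identity elements in the appropriate coordinate, then add vectors $v_i \in V_\rho$ to the $G$-coordinates. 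So the candidate generating set is $x_i = ((v_i, g_i), h_i)$ for $i=1,\dots,\max(d,e)$ (with $g_i = 1$ or $h_i = 1$ once the index exceeds $d$ resp. $e$), together with no further elements — giving $\max(d,e) \le d+e$ generators, which already suffices; but to match the statement I would simply exhibit $d(G)+d(H)$ elements, using the first $d$ to handle $G$ and $V_\rho$ and the next $e$ to handle $H$, projecting to generators in each case.

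The verification splits into checking that the closed subgroup $L$ generated by the $x_i$ surjects onto each of the three "obvious" quotients and then that it is everything. First, projecting to $G$ (killing $V_\rho$ and $H$) the images are $g_1,\dots,g_d$ together with trivial elements, so $L$ surjects onto $G$; similarly $L$ surjects onto $H$. Hence $L \cdot (V_\rho \times 1)$ is the whole group, and it remains to show $V_\rho \times 1 \subseteq L$. Let $N = L \cap (V_\rho \times 1)$, an $\FF_p[G]$-submodule of $V_\rho$ (it is normalized by $L$, whose image in $G$ is all of $G$). Since $V_\rho$ is irreducible as a $G$-module, either $N = V_\rho$, and we are done, or $N = 0$. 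In the latter case $L$ would be a complement to $V_\rho \times 1$ in $(V_\rho \rtimes G)\times H$ isomorphic to $G \times H$, and in particular $L$ would be generated by $d(G)+d(H)$ elements mapping to a generating set of $G \times H$; here is where I must use the freedom in the $v_i$. The point is a counting/Gasch\"utz-type argument: the number of ways to lift a fixed generating tuple of $G \times H$ to a generating tuple of $(V_\rho \rtimes G) \times H$ versus to a tuple generating only a complement — by Gasch\"utz' lemma the former is strictly larger when the action on $V_\rho$ is nontrivial, so a suitable choice of the $v_i$ forces $N \neq 0$, hence $N = V_\rho$.

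I would expect the main obstacle to be precisely this last step: arranging the vectors $v_i$ so that the generated subgroup genuinely contains $V_\rho$ rather than merely projecting onto $G \times H$. The clean way to see it is via Gasch\"utz' lemma (cited in the paper as \cite[Lem.~2.1]{lubotzky:prof_pres}): a generating $(d+e)$-tuple of $G\times H$ lifts to a generating tuple of the extension $1 \to V_\rho \to (V_\rho \rtimes G)\times H \to G \times H \to 1$ precisely because $V_\rho$ is a nontrivial irreducible $G\times H$-module (the $H$-action being trivial does no harm — what matters is that $V_\rho$ has no trivial quotient as a $G \times H$-module, which holds since it has none as a $G$-module), and this extension is $(d+e)$-generated iff $G \times H$ is, which holds by hypothesis $d(G \times H) = d(G) + d(H)$. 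So the proof reduces to: lift a minimal generating tuple of $G \times H$ through the quotient map, apply Gasch\"utz to conclude the lift can be chosen generating, and observe the lifted tuple has the same length $d(G)+d(H)$. I would write it in that order, keeping the explicit-tuple description only as motivation.
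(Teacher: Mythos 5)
Your reduction to showing $N=L\cap(V_\rho\times 1)\neq 0$ is sound and has the same skeleton as the paper's argument, but the way you close that gap does not work as written. The ``clean'' version is circular: Gasch\"utz' lemma (\cite[Lem.~2.1]{lubotzky:prof_pres}) lets you lift a generating $(d+e)$-tuple of $G\times H$ to a generating tuple of $E=(V_\rho\rtimes G)\times H$ \emph{only under the hypothesis} $d(E)\le d+e$ --- which is exactly the conclusion you are trying to prove. Your intermediate assertion that ``the extension is $(d+e)$-generated iff $G\times H$ is, because $V_\rho$ is a nontrivial irreducible module'' is not a citable fact but precisely the content of the lemma, and ``by Gasch\"utz' lemma the former is strictly larger'' is not something Gasch\"utz' lemma says. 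The counting route can be repaired, but you must actually do the count: the non-generating lifts of a fixed generating $(d+e)$-tuple of $G\times H$ are in bijection with the complements of $V_\rho$, of which there are $|Z^1(G\times H,V_\rho)|=|Z^1(G,V_\rho)|\le|V_\rho|^{d}<|V_\rho|^{d+e}$ (any cocycle vanishes on $H$ since $H$ acts trivially and $V_\rho^G=0$, and a cocycle on $G$ is determined by its values on $d$ generators), whereas there are $|V_\rho|^{d+e}$ lifts in total; note that this uses $e\ge 1$, i.e.\ the non-triviality of $H$. None of this appears in your write-up. Two smaller points: the claim that $\max(d,e)$ elements of the form $((v_i,g_i),h_i)$ ``already suffice'' is false (for $G=H=\ZZ/p\ZZ$ a single element cannot even surject onto $G\times H$), and the hypothesis $d(G\times H)=d(G)+d(H)$ belongs to Proposition~\ref{prop:weaklyPFRandDirectProducts}, not to the lemma, so you may not invoke it here.

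The paper avoids all cohomological counting with a direct computation. It takes the $d+e$ elements $(0,g_1,1),\dots,(0,g_d,1)$, $(v,1,h_1)$, $(0,1,h_2),\dots,(0,1,h_e)$, where $v\in V_\rho$ is a vector moved by some generator, say $g_1$ (such $v$ exists since $\rho$ is non-trivial). These project onto a generating set of $G\times H$, and the commutator $[(0,g_1,1),(v,1,h_1)]=(\rho(g_1)v-v,1,1)$ is a non-zero element of $V_\rho$ lying in the generated subgroup $L$; irreducibility then forces your alternative $N=V_\rho$. Observe how the non-triviality of $H$ enters there: one needs a generator $h_1$ with trivial $G$-coordinate to carry the vector $v$, so that its commutator with a pure $G$-generator lands inside $V_\rho$. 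Either carry out the $Z^1$ estimate above explicitly, or switch to this commutator trick, which is shorter.
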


\begin{rmk}
 Let $G, H, \rho$ and $V_\rho$ be as in the statement of the lemma. We can define a representation of $G\times H$ on $V_\rho$ by letting $H$ act trivially. With this action $V_\rho \rtimes (G\times H)$ is isomorphic to $(V_\rho \rtimes G) \times H$. Hence there is no confusion in writing $V_\rho \rtimes G\times H$.
\end{rmk}
\begin{proof}[Proof of Lemma \ref{lem:directProductsGenerators}]
 Let $n = d(G)$ and $m = d(H)$. By assumption $n,m \geq 1$.
 Let $g_1,\dots,g_n$ and $h_1,\dots,h_m$ be minimal generating sets of $G$ and $H$ respectively.
 We choose some non-zero vector $v \in V_\rho$. We verify that the elements $(0,g_1,1), \dots, (0,g_n,1), (v,1,h_1), (0,1,h_2), \dots, (0,1,h_m)$ generate
 the group $V_\rho \rtimes G \times H$.
 Clearly, the group generated by these elements projects onto $G\times H$. It suffices to check that it contains all elements from $V_\rho$.
 The representation $(\rho, V_\rho)$ of $G$ is non-trivial, hence some element (say $g_1$) acts non-trivially on $v$.
 The commutator $[(0,g_1,1), (v,1,h_1)] = (\rho(g_1)v - v, 1, 1)$ is a non-trivial element in $V_\rho$. Since $V_\rho$ is irreducible, the vector $\rho(g_1)v - v$  generates $V_\rho$ under the action of $G$.
\end{proof}

\begin{proof}[Proof of Proposition \ref{prop:weaklyPFRandDirectProducts}]
Let $1 \to R \to F \stackrel{\pi}{\longrightarrow} G \times H \to 1$ be the minimal presentation of $G\times H$ where $F$ is the free group on $d(G)+d(H)$ elements.
Let $p$ be a prime and let $(\rho, V_\rho)$ be a non-trivial irreducible representation of $G$ over $\FF_p$.
By Lemma \ref{lem:directProductsGenerators} the group $V_\rho \rtimes G \times H$ is $d(G) + d(H)$ generated. Hence, by Gasch\"utz' Lemma,
there is a surjective homomorphism $f \colon F \to V_\rho \rtimes G \times H$ such that the following diagram commutes.
\begin{equation*}
   \begin{CD} 
       R @>>> F @>{\pi}>> G\times H \\
       @V{f_{|R}}VV @V{f}VV @| \\
       V_\rho @>>> V_\rho \rtimes G \times H @>>> G\times H
   \end{CD}
\end{equation*}
The kernel of $f$ is a maximal $F$-stable subgroup of $R$.
We conclude (for $k>1$) that $m^F_{p^k}(R) \geq r_k(G, \FF_p)$. Hence, if $G$ does not have UBERG, then $R$ does not have polynomial maximal $F$-stable subgroup growth and
the presentation is not PFR by Proposition \ref{prop:PFRiffPMNSG}.
\end{proof}

The finitely generated free profinite groups are the only examples of weakly PFR (but not PFR) groups we are aware of. Therefore we conclude this section with the following question.
\begin{question}
 Is there a profinite group which is weakly PFR, but is neither PFR nor free?
\end{question}

\section*{Acknowledgements}
The first author was financed by the DFG grant KL 2162/1-1. The second author recognises financial support from Heinrich-Heine Universit\"at D\"{u}sseldorf.

\bibliographystyle{abbrv}
\bibliography{bibliography}

\end{document}